\documentclass[12pt]{amsart}
\usepackage{amsmath,amsthm,amsfonts,amssymb,graphicx,psfrag,dsfont}

\usepackage{hyperref}
\usepackage[alphabetic,initials]{amsrefs}
\usepackage{color}

\newcommand{\red}[1]{{#1}}

\psfrag{intxM}{$[0,\infty) \times M$}
\psfrag{Ww}{$(W,\omega)$}

\setcounter{tocdepth}{2}

\theoremstyle{plain}

\newtheorem{thm}{Theorem}
\newtheorem{prop}{Proposition}[section]

\newtheorem{cor}{Corollary}
\newtheorem{lemma}[prop]{Lemma}

\theoremstyle{definition}
\newtheorem{example}[prop]{Example}

\newtheorem{defn}[prop]{Definition}

\theoremstyle{remark}
\newtheorem{remark}[prop]{Remark}

\setlength{\topmargin}{0in}
\setlength{\oddsidemargin}{0.5in}
\setlength{\evensidemargin}{0.5in}
\setlength{\textwidth}{5.25in}
\setlength{\textheight}{8.5in}


\newcommand{\cov}{\operatorname{cov}}

\newcommand{\Diff}{\operatorname{Diff}}

\newcommand{\ev}{\operatorname{ev}}

\newcommand{\Hom}{\operatorname{Hom}}

\newcommand{\Id}{\operatorname{Id}}

\newcommand{\ind}{\operatorname{ind}}

\newcommand{\interior}{\operatorname{int}}

\newcommand{\muCZ}{\mu_{\operatorname{CZ}}}

\newcommand{\sing}{\operatorname{sing}}

\newcommand{\Symp}{\operatorname{Symp}}

\newcommand{\Vectors}{\operatorname{Vec}}

\newcommand{\wind}{\operatorname{wind}}


\newcommand{\Spp}{\operatorname{Sp}}


\newcommand{\CC}{{\mathbb C}}
\newcommand{\DD}{{\mathbb D}}

\newcommand{\NN}{{\mathbb N}}

\newcommand{\RR}{{\mathbb R}}

\newcommand{\ZZ}{{\mathbb Z}}

\newcommand{\fF}{{\mathcal F}}

\newcommand{\jJ}{{\mathcal J}}

\newcommand{\mM}{{\mathcal M}}
\newcommand{\nN}{{\mathcal N}}

\newcommand{\pP}{{\mathcal P}}
\newcommand{\sS}{{\mathcal S}}


\newcommand{\p}{\partial}

\numberwithin{equation}{section}

\title[Fillable Contact Manifolds and Holomorphic Foliations]{Strongly
Fillable Contact Manifolds and $J$--holomorphic Foliations}
\author{Chris Wendl}
\address{ETH Z\"urich \\ Departement Mathematik, HG G38.1 \\ 
R\"amistrasse 101 \\
8092 Z\"urich \\ 
Switzerland}
\email{wendl@math.ethz.ch}
\urladdr{http://www.math.ethz.ch/~wendl/}
\thanks{Research partially supported by an NSF Postdoctoral Fellowship
(DMS-0603500).}
\subjclass[2000]{Primary 32Q65; Secondary 57R17}

\begin{document}

\begin{abstract}
We prove that every strong symplectic filling of a planar contact manifold
admits a symplectic Lefschetz fibration over the disk, and
every strong filling of~$T^3$ similarly admits a
Lefschetz fibration over the annulus.  It follows that strongly fillable
planar contact structures are also Stein fillable, and
all \red{strong} fillings of $T^3$ are \red{equivalent up to symplectic
deformation and blowup}.
These constructions result from a compactness theorem for punctured 
$J$--holomorphic curves that foliate a convex symplectic manifold.
We use it also to show that the compactly supported
symplectomorphism group on $T^*T^2$ is contractible, and
to define an obstruction to strong fillability that yields a
non-gauge-theoretic proof of Gay's recent nonfillability result
\cite{Gay:GirouxTorsion} for
contact manifolds with positive Giroux torsion.
\end{abstract}

\maketitle

\tableofcontents

\section{Introduction}
\label{sec:intro}

Let $M$ be a closed, connected and oriented $3$--manifold.  
A (positive, cooriented) 
\emph{contact structure} on $M$ is a $2$--plane distribution of the form 
$\xi = \ker\lambda$, where the \emph{contact form} $\lambda \in \Omega^1(M)$ 
satisfies $\lambda \wedge d\lambda > 0$.  It is a natural question in 
contact geometry to ask whether a given contact manifold $(M,\xi)$ is
symplectically fillable, meaning the following: we say that a compact and
connected symplectic
manifold $(W,\omega)$ with boundary $\p W = M$ is a \emph{weak filling}
of $(M,\xi)$ if $\omega|_\xi > 0$, and it is a \emph{strong filling} if
$\xi = \ker \iota_Y \omega$ for some
vector field $Y$ defined near $\p W$ which points transversely
outward at the boundary and satisfies $L_Y\omega = \omega$.  
\red{If $Y$ extends globally over $W$, then $\iota_Y\omega$ defines a global
primitive of $\omega$ and thus makes $(W,\omega)$ an \emph{exact filling}.}
A still stronger
notion is a \emph{Stein filling} $(W,\omega)$, which comes with an
integrable complex structure $J$ and admits a proper
plurisubharmonic function $\varphi : W \to [0,\infty)$ for which
$\p W$ is a level set, $Y$ is the gradient and $\omega = -d d^{\CC}\varphi$.
We refer to \cites{Etnyre:convexity,OzbagciStipsicz} 
for more details on these notions.

The vector field $Y$ near the boundary of a strong filling is called a
\emph{Liouville vector field}, and it induces a contact form
\red{$\lambda := \iota_Y\omega|_M$}.  As we'll review \red{shortly},
the existence of $Y$ is then equivalent to
the condition that one can smoothly glue the 
\emph{positive symplectization}
$([0,\infty) \times M, d(e^a\lambda))$ to $(W,\omega)$ along $\p W =
\{0\} \times M$; in the language of symplectic field theory
(cf.~\cite{SFTcompactness}), this produces a symplectic cobordism with a
positive cylindrical end.  One can also replace $\lambda$ by a positive
multiple of any
other contact form defining $\xi$ after attaching to $(W,\omega)$ 
a \red{trivial symplectic cobordism} (see \eqref{eqn:cylStein}
below).  In either case, the enlarged \red{symplectic} manifold is \red{exact}
if $(W,\omega)$ is an \red{exact} filling.

In this paper we examine some of the consequences for strong symplectic
fillings and Stein fillings when a subset 
of the contact manifold (or rather its
symplectization) admits foliations by $J$--holomorphic curves.
It turns out that whenever a foliation with certain properties exists,
it can be extended from $[0,\infty) \times M$ to fill the entirety of 
$W$ with embedded $J$--holomorphic curves, forming a symplectic 
Lefschetz fibration (Theorems~\ref{thm:Lefschetz} and~\ref{thm:T3}),
and this decomposition is stable under deformations of the symplectic
structure (Theorem~\ref{thm:stability}).  The existence of such a fibration has
consequences for the topology of the filling, e.g.~for planar contact
structures, it implies that the notions ``strongly fillable''
and ``Stein fillable'' are equivalent (Corollary~\ref{cor:Stein}).  For 
the $3$--torus, our arguments \red{establish a conjecture of Stipsicz
\cite{Stipsicz:gaugeStein} by showing that all minimal strong fillings are
symplectically deformation equivalent, and exact fillings in particular}
are symplectomorphic to star shaped domains in $T^*T^2$
(Theorem~\ref{thm:SteinT3}); moreover, the group of compactly supported
symplectomorphisms on $T^*T^2$ is contractible
(Theorem~\ref{thm:sympT3}).  In other situations, one finds that
the foliation on $W$ produces an obvious contradiction, thus implying that
the contact manifold cannot be strongly fillable 
(Theorem~\ref{thm:obstruction})---this is the case in
particular for any contact manifold with positive Giroux torsion
(Example~\ref{ex:torsion}).
 
\subsubsection*{Acknowledgments}

This work emerged originally out of discussions with Klaus
Niederkr\"uger and subsequently received much valuable
encouragement from John 
Etnyre.  It was the latter in particular who pointed out to me the questions
regarding Giroux torsion and Stein fillability; I'm also grateful to
both John and Paolo Ghiggini for bringing Stipsicz' paper
\cite{Stipsicz:gaugeStein} to my attention after the first version of this 
paper was circulated.  Thanks also to Dietmar Salamon, Ko Honda,
\red{Mark McLean} and especially
Richard Hind for helpful conversations.

\section{Main results}
\label{sec:results}

\subsection{Existence of Lefschetz fibrations and Stein structures}

Recall that a
contact manifold $(M,\xi)$ is called \emph{planar} if it admits an open book
decomposition that supports $\xi$ and has pages of genus zero.  
We refer to \cite{Etnyre:lectures} or \cite{OzbagciStipsicz} 
for the precise definitions;
for our purposes in the statement of the theorem below,
an open book decomposition is a fibration
$\pi : M \setminus B \to S^1$ where the \emph{binding} $B$ is a link in $M$.  
Then the \emph{pages} are the
preimages $\pi^{-1}(t)$ and the condition ``supports $\xi$'' means essentially
that $\xi = \ker\lambda$ for some contact form (a so-called
\emph{Giroux form}) such that $d\lambda$ is
symplectic on the pages and $\lambda$ is positive on the binding.  One can
always ``fatten'' an open book decomposition by expanding $B$ to a tubular
neighborhood $\nN(B)$ and slightly shrinking the pages,
thus deforming $\pi$ to a nearby map
$$
\hat{\pi} : M \setminus \nN(B) \to S^1.
$$
We will use this notation consistently in the following.

Suppose $W$ and $\Sigma$ are compact oriented manifolds of real dimension~$4$
and~$2$ respectively, possibly with boundary.  A \emph{Lefschetz fibration}
$\Pi : W \to \Sigma$ is then a smooth surjective map which is a locally trivial
fibration outside of finitely many critical values $q \in \interior{\Sigma}$,
where each \emph{singular fiber} $\Pi^{-1}(q)$ has a unique critical point,
at which $\Pi$ can be modeled in some choice of complex coordinates by
$\Pi(z_1,z_2) = z_1^2 + z_2^2$.  For $(W,\omega)$ a symplectic manifold, we 
call the Lefschetz fibration \emph{symplectic} if the fibers are symplectic 
submanifolds.  If $q' \in \Sigma$ is close 
to a critical value $q$, then there is a special
circle $C \subset \Pi^{-1}(q')$, called a \emph{vanishing cycle}, such that
the singular fiber $\Pi^{-1}(q)$ can be identified with $\Pi^{-1}(q')$ after
collapsing $C$ to a point.  
(Again, see \cite{OzbagciStipsicz} for precise definitions.)
One says that the Lefschetz fibration is 
\emph{allowable} if all vanishing cycles are homologically nontrivial
in their fibers.

Denote by $\DD \subset \CC$ the closed unit disk, whose boundary
$\p \DD$ is naturally identified with $S^1 = \RR / \ZZ$.  \red{For any symplectic 
manifold $(W,\omega)$ with contact boundary $(M,\xi)$, the
restriction of a symplectic Lefschetz fibration $\Pi : W \to \DD$
over $\p\DD$ defines an open book decomposition supporting~$\xi$
(see \cite{OzbagciStipsicz}*{\S 10.2})}.
One can see in particular that for any Liouville vector field $Y$ near
$\p W$, the induced contact form $\lambda := \iota_Y\omega$ satisfies
$d\lambda > 0$ on each fiber over $\p \DD$.  One can now ask whether the
converse holds: given an open book $\hat{\pi} : M \setminus \nN(B) \to S^1$ 
supporting $\xi$ and a strong filling $W$, does $W$ admit a Lefschetz fibration 
over $\DD$ that restricts to $\hat{\pi}$ on $\p W \setminus \nN(B)$?  
This would be too 
ambitious as stated, as one cannot expect that the contact form induced on $\p W$
will define positive area on the pages of $\hat{\pi}$: this cannot be true in
particular if $\ker \omega|_{\p W}$ is ever tangent to a page.

This problem can be avoided by enlarging the filling so as to induce
different contact forms (but the same contact structure) on the boundary:
if $\iota_Y\omega|_{\p W} = e^f\lambda$ for some contact form $\lambda$ and
smooth function $f : M \to \RR$, then for any other function
$g : M \to \RR$ with $g > f$ one can define the domain
\begin{equation}
\label{eqn:cylStein}
\sS_f^g = \{ (a,m) \in \RR \times M\ |\ f(m) \le a \le g(m)\ \}.
\end{equation}
This yields a symplectic cobordism $(\sS_f^g, d(e^a\lambda))$ with Liouville
vector field $\p_a$, inducing the contact forms 
$\iota_{\p_a} d(e^a\lambda) = e^f\lambda$ and $e^g\lambda$ on its negative
and positive boundaries respectively.  We shall refer to such domains as
\red{\emph{trivial symplectic cobordisms}, and will sometimes also consider
noncompact versions for which $f=-\infty$ or $g = +\infty$.  The following
is proved by a routine computation.

\begin{lemma}
\label{lemma:attachStein}
Assume $(W,\omega)$ is a strong filling of $(M,\xi)$ with Liouville
vector field $Y$ near $\p W$, and $\iota_Y\omega = \lambda'$.  Suppose further
that 
$\lambda$ is a contact form on $M$ and $f : M \to \RR$ is a smooth function
such that $\lambda'|_M = e^f \lambda$.  Then if $\varphi_Y^t$ denotes the flow
of $Y$ for time~$t$, for sufficiently small $\epsilon > 0$,
there is a symplectic embedding
$$
\psi : \left(\sS_{f-\epsilon}^f,d(e^a\lambda)\right) 
\hookrightarrow (W,\omega) : (a,m) \mapsto \varphi^{a - f(m)}_Y(m)
$$
that maps $\p\sS_{-\infty}^f$ to $\p W$ and is a diffeomorphism onto a
closed neighborhood of $\p W$ in $W$.  Moreover $\psi^*\lambda' = e^a\lambda$
and $\psi_*\p_a = Y$.
\end{lemma}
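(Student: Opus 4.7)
The plan is a direct verification of the claimed properties, built around the Liouville identity for $\lambda'$. Since $L_Y\omega = \omega$ and $\lambda' = \iota_Y\omega$, Cartan's formula gives $L_Y\lambda' = d\iota_Y\lambda' + \iota_Y d\lambda' = \iota_Y\omega = \lambda'$, and integrating along the flow yields the fundamental identity $(\varphi_Y^t)^*\lambda' = e^t\lambda'$ wherever the flow is defined; in particular $d\lambda' = \omega$.

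First I would use compactness of $M$ together with the transversality of $Y$ to $\p W$ to select $\epsilon > 0$ so that the flow map $\Phi : [-\epsilon,0] \times M \to W$, $\Phi(t,m) := \varphi_Y^t(m)$, is a diffeomorphism onto a closed collar neighborhood of $\p W$. Using $\iota_Y\lambda' = \iota_Y\iota_Y\omega = 0$ together with the Liouville identity, a short computation shows $\Phi^*\lambda' = e^{t+f(m)}\lambda$, with no $dt$-component (here $\lambda$ denotes the pullback to the collar via the projection to $M$).

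Next I would postcompose with the diffeomorphism $G : \sS_{f-\epsilon}^f \to [-\epsilon,0]\times M$ defined by $(a,m) \mapsto (a-f(m),m)$, and set $\psi := \Phi \circ G$. Unwinding definitions gives $\psi(a,m) = \varphi_Y^{a-f(m)}(m)$ and $\psi_*\p_a = \Phi_* G_*\p_a = \Phi_*\p_t = Y$, while the pullback becomes $\psi^*\lambda' = G^*(e^{t+f(m)}\lambda) = e^a\lambda$. Applying $d$ and using $\omega = d\lambda'$ then yields $\psi^*\omega = d(e^a\lambda)$, so $\psi$ is a symplectic embedding with all the asserted properties, and sends $\{a = f(m)\}$ to $\p W$ because $\psi(f(m),m) = \varphi_Y^0(m) = m$.

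There is not really a main obstacle, as the author labels this a routine computation; the only point requiring care is ensuring that $\epsilon$ can be chosen uniformly so that $\psi$ is a global diffeomorphism onto a closed neighborhood of $\p W$ rather than merely a local one. This is immediate from compactness of $M$, transversality of $Y$ to $\p W$, and the fact that $\psi$ restricts to the identity $M \to \p W$ along the boundary $\p\sS_{-\infty}^f = \{a = f(m)\}$.
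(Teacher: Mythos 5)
Your proof is correct and is precisely the ``routine computation'' the paper alludes to without writing out: the key identity $(\varphi_Y^t)^*\lambda' = e^t\lambda'$, the collar given by the backward flow, and the reparametrization $(a,m)\mapsto(a-f(m),m)$ are exactly the intended argument. The only cosmetic point is that $d\lambda'=\omega$ (which follows from $L_Y\omega=\omega$ and $d\omega=0$) is used in your Cartan-formula computation before you state it, so it would read better derived first.
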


In light of this, one can
smoothly glue any trivial symplectic cobordism of the form
$(\sS_f^g,d(e^a\lambda))$ to $(W,\omega)$, and the enlarged filling
is exact if $(W,\omega)$ is an exact filling.
An important simple example
is the case where $f \equiv 0$ and $g = \infty$: then we are simply
attaching the positive symplectization $([0,\infty) \times M,d(e^a\lambda))$
where $\lambda = \iota_Y\omega|_{\p W}$.  It will often be convenient however
to take nonconstant $f$, so that the contact form appearing in
$d(e^a\lambda)$ may be chosen at will.}

Recall that an \emph{exceptional sphere} in a symplectic $4$--manifold 
$(W,\omega)$ is a symplectically embedded $2$--sphere with self-intersection 
number~$-1$, and $(W,\omega)$ is called \emph{minimal} if it contains no
exceptional spheres.  \red{We can now state the first main result.}

\begin{thm}
\label{thm:Lefschetz}
Suppose $(W,\omega)$ is a strong symplectic filling of a planar contact
manifold $(M,\xi)$, and $\pi : M \setminus B \to S^1$ is a planar open
book supporting~$\xi$.  Then there is an enlarged filling $(W',\omega)$
obtained by attaching a \red{trivial symplectic}
cobordism to $W$, such that $W'$ admits a symplectic Lefschetz fibration
$\Pi : W' \to \DD$ for which $\Pi|_{\p W' \setminus \nN(B)} = \hat{\pi}$.  
Moreover, $\Pi : W' \to \DD$ \red{is} allowable if $W$ 
is minimal.
\end{thm}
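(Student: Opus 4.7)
The plan is to realize the desired Lefschetz fibration as the projection onto the leaf space of a $J$--holomorphic foliation that extends the open book from the positive end of the symplectization across the entire filling. First I would apply Lemma~\ref{lemma:attachStein} to attach a trivial symplectic cobordism so that the induced contact form on the new boundary is a Giroux form $\lambda$ supporting $\pi$, and then further attach the positive symplectization $([0,\infty) \times M,d(e^a\lambda))$ to obtain a noncompact completion $\widehat{W}$. Choose an $\omega$--tame almost complex structure $J$ on $\widehat{W}$ that is $\RR$--invariant of the standard cylindrical form on the end, and such that the lifts of the pages of $\pi$ are embedded finite-energy $J$--holomorphic curves with positive punctures at the Reeb orbits along the binding $B$; such a $J$ exists by the standard construction near a Giroux form, and translating these lifted pages in the $\RR$--direction gives a smooth two-parameter family of pairwise disjoint embedded curves foliating the cylindrical end.

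Next, consider the moduli space $\mM$ of finite-energy, genus--zero, $J$--holomorphic curves in $\widehat{W}$ in the relative homotopy class of a page, with the prescribed positive asymptotics at the binding orbits. By the Fredholm index formula and automatic transversality for genus--zero curves with only positive punctures, $\mM$ is an orientable smooth surface whose subset inside the cylindrical end is precisely the foliation above. The decisive step is compactness: I would invoke the SFT compactness theorem, adapted to foliating punctured curves, to show that any sequence in $\mM$ converges either to another smooth element or to a nodal configuration consisting of two embedded curves meeting transversally at a single interior point of $\widehat{W}$. Multi-level SFT buildings and bubbling of multiply covered components are excluded by positivity of intersections (distinct leaves of the foliation cannot meet) together with the strict homological energy bound determined by the page class and the asymptotic data. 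The compactified moduli space $\overline{\mM}$ is therefore a compact surface homeomorphic to $\DD$, with the cylindrical end corresponding to a collar of $\p\DD$ and the finitely many nodal configurations sitting over a discrete set of interior points.

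Now truncate $\widehat{W}$ at a level $a = a_0$ lying past every nodal fiber and call the resulting compact manifold $W'$; by Lemma~\ref{lemma:attachStein}, $W'$ differs from $W$ by attachment of a trivial symplectic cobordism. Positivity of intersections together with the fact that the foliation fills the positive end imply that every point of $W'$ lies on a unique element of $\overline{\mM}$, so the map $\Pi : W' \to \overline{\mM} \cong \DD$ sending a point to the fiber through it is a smooth symplectic Lefschetz fibration whose regular fibers are the smooth members of $\overline{\mM}$ and whose singular fibers are the nodal configurations (which are transverse self-intersections of embedded $J$--holomorphic curves, hence of standard Lefschetz type). By construction $\Pi|_{\p W'\setminus\nN(B)} = \hat{\pi}$. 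Finally, if some vanishing cycle $C$ in a regular fiber $F$ were nullhomologous, then planarity of $F$ would force $C$ to bound a subsurface $F_0 \subset F$; capping $F_0$ with the Lefschetz thimble of $C$ would yield a symplectically embedded $2$--sphere of self-intersection~$-1$, contradicting minimality of $W$.

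The main obstacle is the compactness step: one must exclude all degenerations inconsistent with the Lefschetz picture---multi-level breakings into the positive end, bubbling of spheres or multiply covered planes, and escape of energy---and show that the only surviving singular configurations are transverse nodal pairs of embedded curves, which then produce exactly the Lefschetz critical points.
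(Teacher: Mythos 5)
Your overall strategy is the one the paper uses: extend the open book to a finite energy foliation of the cylindrical end, compactify the moduli space of leaves, and take the leaf-space projection as the Lefschetz fibration. But two points you pass over are where the real work (and the paper's Theorem~\ref{thm:compactness} and Lemma~\ref{lemma:spheres}) lives. First, the compactness step: positivity of intersections and the energy bound do \emph{not} by themselves rule out bad nodal degenerations in the compact part of the filling. Positivity of intersections (in Siefring's sense, so that no intersections escape to infinity) is what forces any component lying in the cylindrical end to be a leaf of the foliation; but to conclude that a nodal limit in the main level has exactly two somewhere injective components meeting once transversally, you need a lower bound $\ind \ge 0$ on somewhere injective components \emph{not} contained in the end, which requires perturbing $J$ generically away from the cylindrical end, and then an index count ($\ind(u)=2$ distributed over components, nodes contributing $+2$ each, multiple covers and ghost bubbles contributing nonnegatively) to pin down the configuration. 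You never introduce this genericity, and ``automatic transversality for genus-zero curves with only positive punctures'' does not substitute for it: automatic transversality gives unobstructedness of the curves that \emph{do} appear, but not the index lower bound needed to exclude configurations with negative-index components.

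Second, the allowability argument has a genuine gap: if a vanishing cycle is nullhomologous, the resulting exceptional sphere (which is in fact the closed component of the corresponding nodal $J$-holomorphic curve, with self-intersection $-1$ by the punctured adjunction formula rather than by a purely topological thimble argument) a priori lives somewhere in the enlarged filling $W'$ or even in the noncompact completion, not necessarily in $W$. Minimality of $W$ alone therefore does not immediately give a contradiction. The paper closes this by choosing $J$ so that $W^\infty \setminus W$ is foliated by $J$-convex hypersurfaces (Lemma~\ref{lemma:spheres}), forcing every closed nonconstant $J$-holomorphic curve into the interior of $W$; you need some such convexity argument. A minor further omission: the leaves are punctured at the binding orbits, so $\Pi$ is only defined on $W' \setminus B$ and must be extended over a tubular neighborhood of $B$ by hand.
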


The following corollary was pointed out to me by John Etnyre:

\begin{cor}
\label{cor:Stein}
Every strongly fillable planar contact manifold is also Stein fillable.
\end{cor}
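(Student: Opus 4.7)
\medskip

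The plan is to combine Theorem~\ref{thm:Lefschetz} with the converse direction of the Loi--Piergallini correspondence between Lefschetz fibrations over $\DD$ and Stein domains. Since ``Stein fillable'' is a property of the contact manifold $(M,\xi)$ rather than of a particular filling, it suffices to exhibit \emph{some} Stein filling; so we are allowed to replace the given strong filling by any convenient one.

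First I would reduce to the minimal case. Any exceptional sphere in a strong filling $(W,\omega)$ can be symplectically blown down without altering the contact structure on the boundary, and iterating this finitely many times produces a minimal strong filling $(W_{\min},\omega_{\min})$ of $(M,\xi)$. The key point is that we are free to do this because we only need Stein fillability of $(M,\xi)$, not of $W$ itself. Now Theorem~\ref{thm:Lefschetz} applied to $(W_{\min},\omega_{\min})$ and the given planar open book $\pi : M \setminus B \to S^1$ produces an enlarged filling $(W',\omega')$, obtained by attaching a trivial symplectic cobordism to $W_{\min}$, which carries an \emph{allowable} symplectic Lefschetz fibration
\[
\Pi : W' \to \DD
\]
whose restriction to $\p W' \setminus \nN(B)$ is $\hat{\pi}$. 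Note that attaching a trivial symplectic cobordism does not introduce any new exceptional spheres (the cobordism is topologically $[0,1] \times M$), so allowability survives the enlargement.

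Next I would invoke the theorem of Loi--Piergallini (see also Akbulut--Ozbagci): any allowable Lefschetz fibration over the disk whose generic fiber is a compact surface with nonempty boundary admits a compatible Stein structure, whose induced contact structure on the boundary is precisely the one supported by the open book obtained by restricting the fibration. Applied to $\Pi : W' \to \DD$, this produces a Stein filling whose boundary carries the contact structure supported by $\hat{\pi}$. Since $\hat{\pi}$ is a fattening of the given Giroux-supporting open book $\pi$, it also supports $\xi$, and Giroux's theorem (uniqueness of the supported contact structure up to isotopy) identifies the resulting boundary contact structure with~$\xi$.

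The substantive input is of course Theorem~\ref{thm:Lefschetz} itself; given that, the only nontrivial issue here is the allowability hypothesis, which is why the reduction to a minimal filling must be made before applying the theorem. Everything else is bookkeeping: verifying that the enlargement by a trivial symplectic cobordism preserves minimality and that the boundary open books are correctly identified so that Loi--Piergallini applies with the right contact structure on $\p W'$.
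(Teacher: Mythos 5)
Your proposal is correct and follows essentially the same route as the paper: blow down to a minimal filling, apply Theorem~\ref{thm:Lefschetz} to obtain an allowable symplectic Lefschetz fibration over $\DD$, and then invoke the fact that such a fibration carries a Stein structure inducing the supported contact structure on the boundary. The only (cosmetic) difference is attribution: the direction you need --- allowable Lefschetz fibration $\Rightarrow$ Stein --- is Eliashberg's topological characterization of Stein domains (as elaborated by Gompf--Stipsicz and Akbulut--Ozbagci), rather than the Loi--Piergallini theorem, which is the other implication.
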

\begin{proof}
Suppose $(W,\omega)$ is a strong filling of $(M,\xi)$ and the latter is
planar.  By blowing down as in \cite{McDuff:rationalRuled} and then attaching
a \red{trivial symplectic} cobordism, we can modify $W$ to a minimal filling 
$(\widehat{W},\hat{\omega})$
that admits an allowable symplectic Lefschetz fibration due to
Theorem~\ref{thm:Lefschetz}.  \red{It then follows from Eliashberg's
topological characterization of Stein manifolds \cite{Eliashberg:Stein}
(see also \cites{GompfStipsicz,AkbulutOzbagci:Lefschetz})
that $(\widehat{W},\hat{\omega})$ is symplectically deformation
equivalent to a Stein domain}.
\end{proof}

\red{Recall that by a result of Giroux \cite{Giroux:openbook}, a contact
$3$--manifold is Stein fillable if and only if it admits a supporting open
book whose monodromy is a product of positive Dehn twists.  One can
understand this in the context of Lefschetz fibrations as follows: if
$(W,\omega)$ is a Stein filling of $(M,\xi)$, then it admits a Lefschetz 
fibration over the disk by a result of Loi-Piergallini \cite{LoiPiergallini} 
or Akbulut-Ozbagci \cite{AkbulutOzbagci:Lefschetz}.  The monodromy of the 
resulting open book decomposition of $M$ can then be obtained by composing 
positive Dehn twists along the vanishing
cycles of each singular fiber (see for example \cite{OzbagciStipsicz}).
Conversely, any open book with this property can be realized as the boundary
of some Lefschetz fibration, which admits a Stein structure due to
Eliashberg \cite{Eliashberg:Stein}.  Giroux
asked whether it might in fact be true that \emph{every} open book
of $(M,\xi)$ must have this property when $(M,\xi)$ is Stein fillable.
Theorem~\ref{thm:Lefschetz} implies an affirmative answer at least for
the planar open books:

\begin{cor}
\label{cor:monodromy}
If $(M,\xi)$ is a planar contact manifold, then it is strongly (and thus
Stein) fillable if and only if every supporting planar open book has
monodromy isotopic to a product of positive Dehn twists.
\end{cor}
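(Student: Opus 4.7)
The plan is to derive both directions of the corollary as direct consequences of Theorem~\ref{thm:Lefschetz} together with known facts relating Lefschetz fibrations over the disk, open books, and Stein structures. Since the corollary is essentially a restatement of Theorem~\ref{thm:Lefschetz} in monodromy language, the work has already been done; I only need to package it.

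For the $(\Leftarrow)$ direction, I would argue as in the discussion preceding the corollary. The planarity hypothesis on $(M,\xi)$ guarantees the existence of at least one supporting planar open book, so by assumption its monodromy is a product of positive Dehn twists along simple closed curves in a page. Using the recipe of Loi-Piergallini \cite{LoiPiergallini} or Akbulut-Ozbagci \cite{AkbulutOzbagci:Lefschetz}, I can realize this data as the boundary monodromy of an allowable Lefschetz fibration over $\DD$ whose regular fiber is the given genus-zero page. By Eliashberg's topological characterization of Stein manifolds \cite{Eliashberg:Stein}, this Lefschetz fibration carries a Stein structure whose induced contact structure on the boundary is (isotopic to) $\xi$, so $(M,\xi)$ is Stein, hence strongly, fillable.

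For the $(\Rightarrow)$ direction, let $(W,\omega)$ be a strong filling of $(M,\xi)$ and $\pi : M \setminus B \to S^1$ an arbitrary supporting planar open book. First I would reduce to the minimal case by repeatedly blowing down exceptional spheres, following \cite{McDuff:rationalRuled} as in the proof of Corollary~\ref{cor:Stein}; this leaves $(M,\xi)$ unchanged. Then Theorem~\ref{thm:Lefschetz} applied to the minimal filling and the open book $\pi$ produces, after attachment of a trivial symplectic cobordism, an enlarged filling $(W',\omega)$ equipped with an allowable symplectic Lefschetz fibration $\Pi : W' \to \DD$ such that $\Pi|_{\p W' \setminus \nN(B)} = \hat\pi$. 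In particular the open book induced on $\p W'$ by $\Pi|_{\p \DD}$ agrees with $\pi$ up to the fattening operation, hence represents the same mapping class of monodromy.

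Finally, it is a standard fact for Lefschetz fibrations over the disk (see \cite{OzbagciStipsicz}) that the monodromy of the open book on the boundary, read off by parallel transport around $\p\DD$, is isotopic to a composition of positive Dehn twists along the vanishing cycles of the singular fibers, one for each critical value. Allowability guarantees that each such vanishing cycle is a homologically essential simple closed curve on the genus-zero page, so in particular a nontrivial Dehn twist. This exhibits the monodromy of $\pi$ as a product of positive Dehn twists, completing the proof. The only place where any real content is being used is Theorem~\ref{thm:Lefschetz}; the remaining manipulations are bookkeeping, and the main conceptual point to check is simply that the monodromy of the boundary open book of a Lefschetz fibration over $\DD$ is indeed read off from the vanishing cycles in the expected way, which is classical.
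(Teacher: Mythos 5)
Your proof is correct and is essentially the argument the paper intends: the forward direction is Theorem~\ref{thm:Lefschetz} (applied to an arbitrary supporting planar open book) plus the classical fact that the boundary monodromy of a Lefschetz fibration over $\DD$ is the product of positive Dehn twists along its vanishing cycles, and the converse is the standard construction of a Stein Lefschetz fibration from a positive factorization. One minor citation quibble: Loi--Piergallini and Akbulut--Ozbagci prove the opposite implication (Stein filling $\Rightarrow$ Lefschetz fibration); realizing a positive factorization as the boundary of a Lefschetz fibration is the elementary handle-attachment construction in \cite{OzbagciStipsicz}, with the Stein structure then supplied by \cite{Eliashberg:Stein}.
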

}

As an immediate consequence \red{of Corollary~\ref{cor:Stein}},
we also obtain a new obstruction to the existence of planar open books:

\begin{cor}
\label{cor:planar}
If $(M,\xi)$ is a contact manifold which is strongly fillable but not
Stein fillable, then it is not planar.
\end{cor}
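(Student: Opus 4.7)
The plan is to observe that Corollary~\ref{cor:planar} is logically just the contrapositive of Corollary~\ref{cor:Stein}, so the proof collapses to a one-line argument with no new content.

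More precisely, I would argue as follows. Suppose for contradiction that $(M,\xi)$ is planar and admits a strong symplectic filling $(W,\omega)$. Then by Corollary~\ref{cor:Stein}, $(M,\xi)$ is Stein fillable, contradicting the hypothesis. Hence $(M,\xi)$ cannot be planar.

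There is no genuine obstacle here; all the real work is already packaged in Corollary~\ref{cor:Stein}, which in turn relies on Theorem~\ref{thm:Lefschetz} (to produce an allowable symplectic Lefschetz fibration over $\DD$ after blowdown and attachment of a trivial symplectic cobordism) and on Eliashberg's topological characterization of Stein manifolds \cite{Eliashberg:Stein} (to upgrade the resulting Lefschetz fibration to a Stein structure up to symplectic deformation). Since the corollary is labeled ``immediate,'' I would keep the proof to at most two or three sentences and resist the temptation to re-derive any of the inputs.
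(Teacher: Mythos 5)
Your proof is correct and matches the paper exactly: the paper presents Corollary~\ref{cor:planar} as an immediate consequence of Corollary~\ref{cor:Stein}, i.e.\ precisely the contrapositive argument you give. Nothing further is needed.
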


\begin{remark}
\label{remark:Paolo}
It was not known until recently whether strong and Stein fillability are
equivalent notions: a negative answer was provided by a construction
due to P.~Ghiggini \cite{Ghiggini:strongNotStein} of strongly fillable
contact manifolds that are not Stein fillable.  It follows then from
the above results that Ghiggini's contact structures are not planar.
\end{remark}

The reason here for the restriction to planar contact structures is
that a planar open book can always be
presented as the projection of a $2$--dimensional $\RR$--invariant family of
$J$--holomorphic curves in the symplectization $\RR\times M$.  
This is a special case of
a construction due to C.~Abbas \cite{Abbas:openbook} that relates open 
book decompositions on general contact manifolds to solutions of a
nonlinear elliptic problem, which specifically in the planar case gives
$J$--holomorphic curves.  (An existence proof for the planar
case is \red{also} given in \cite{Wendl:openbook}.)  For analytical reasons,
$J$--holomorphic curves with the desired properties and higher genus
generically cannot exist.\footnote{Hofer pointed out this trouble in 
\cite{Hofer:real} and suggested the aforementioned elliptic problem as a 
potential remedy, but its compactness properties are not yet fully
understood.}
Nonetheless, one can sometimes derive interesting results for non-planar
contact manifolds using other kinds of decompositions with genus zero fibers,
of which the following is an example.

Let $T^3 = S^1 \times S^1 \times S^1 = T^2 \times S^1$ with coordinates 
$(q_1,q_2,\theta)$,
and write the standard contact structure on $T^3$ as $\xi_0 = \ker\lambda_0$
where
$$
\lambda_0 = \cos(2\pi\theta)\ d q_1 + \sin(2\pi\theta)\ d q_2.
$$
This can be identified with the canonical contact form on the unit
cotangent bundle $S^*T^2 \subset T^*T^2$ as follows:
writing points in $T^2$ as $(q_1,q_2)$, we use the natural identification of
$T^*T^2$ with $T^2 \times \RR^2 \ni (q_1,q_2,p_1,p_2)$ and write the
canonical $1$--form as $p_1\ dq_1 + p_2\ dq_2$.  The $3$--torus
is then $S^*T^2 = T^2 \times \p \DD$, with the 
$\theta$--coordinate corresponding to the point 
\red{$(p_1,p_2) = (\cos(2\pi\theta), \sin(2\pi\theta)) \in \p\DD$}, 
and $\lambda_0$ is the
restriction of $p_1\ dq_1 + p_2\ dq_2$ to this submanifold.  The canonical
symplectic form $\omega_0 := dp_1 \wedge dq_1 + dp_2 \wedge dq_2$ on
$T^*T^2 = T^2 \times \RR^2$ can then be written as $- d d^\CC f$ for the proper
plurisubharmonic function $f(q,p) = \frac{1}{2}|p|^2$, thus
$T^2 \times \DD$ is a Stein domain; we shall refer to it as the
\emph{standard} Stein filling of $(T^3,\xi_0)$.  More generally, 
one has the following construction:

\begin{defn}
A \emph{star shaped domain} $\sS \subset T^*T^2$ is a subset of the form
$\{ (q, t f(q,p) \cdot p) \in T^*T^2\ |\ \text{$t \in [0,1]$, 
$(q,p) \in S^*T^2$} \}$ for some smooth function $f : S^*T^2 \to (0,\infty)$.
\end{defn}
Observe that the boundary $\p\sS$ of a star shaped domain is always
transverse to the radial Liouville vector field $p_1 \p_{p_1} + p_2 \p_{p_2}$,
thus $(\sS,\omega_0)$ is clearly an \red{exact} filling of~$T^3$.

Eliashberg showed in \cite{Eliashberg:fillableTorus} that $\xi_0$ is
the \emph{only} strongly fillable contact structure on~$T^3$.
\red{It is not planar due to \cite{Etnyre:planar}*{Theorem~4.1}, as the
standard filling has $b_2^0(T^2 \times \DD) \ne 0$, though
Van Horn-Morris \cite{Vanhorn:thesis} has shown that it does
admit a genus~$1$ open book.}
It also admits the following decomposition, which one 
might think of as a generalization of an open book with planar pages.  
Let $Z = \{ \theta \in \{0,1/2\} \}
\subset T^3$, a union of two disjoint pre-Lagrangian $2$--tori, and define
\begin{equation}
\label{eqn:piT3}
\begin{split}
\pi : T^3 \setminus Z &\to \{0,1\} \times S^1 \\
(q_1,q_2,\theta) &\mapsto 
\begin{cases}
(0,q_2) & \text{ if $\theta \in (0,1/2)$,} \\
(1,q_2) & \text{ if $\theta \in (1/2,1)$.}
\end{cases}
\end{split}
\end{equation}
This is a smooth fibration, and we can think of it
intuitively as a union of
two open book decompositions with cylindrical pages, and the subset
$Z$ playing the role of the binding.  It supports the contact structure in the
sense that $d\lambda_0$ is positive on each fiber, and the fibers have
natural compactifications with boundary in $Z$ such that $\lambda_0$ is
positive on these boundaries.  As with an open book, one can ``fatten''
$Z$ to a neighborhood $\nN(Z)$ and deform $\pi$ to a nearby \red{fibration}
$$
\hat{\pi} : T^3 \setminus \nN(Z) \to \{0,1\} \times S^1,
$$
\red{whose fibers are compact annuli}.

\begin{thm}
\label{thm:T3}
Suppose $(W,\omega)$ is any strong symplectic filling of $(T^3,\xi_0)$.
Then one can attach to $W$ a \red{trivial symplectic} cobordism, producing an
enlarged filling $W'$ that admits a symplectic Lefschetz fibration
$\Pi : W' \to [0,1] \times S^1$ for which
$\Pi|_{\p W' \setminus \nN(Z)} = \hat{\pi}$.  \red{Moreover, every singular
fiber is the union of an annulus with an exceptional sphere; in particular,
there are no singular fibers if $(W,\omega)$ is minimal.}
\end{thm}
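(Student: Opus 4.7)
The plan is to mimic the strategy underlying Theorem~\ref{thm:Lefschetz}, but replacing the planar open book by the ``cylindrical'' fibration $\hat{\pi}$ on $T^3$, whose fibers are annuli with boundary on the pre-Lagrangian tori $Z_0 := T^2 \times \{0\}$ and $Z_{1/2} := T^2 \times \{1/2\}$. First I would use Lemma~\ref{lemma:attachStein} to attach a trivial symplectic cobordism, arranging that the contact form on the new positive boundary is $\lambda_0$ (or a positive multiple) and that the filling grows into a symplectic manifold with a positive cylindrical end $([0,\infty) \times T^3, d(e^a\lambda_0))$. I would then pick an $\omega$-compatible almost complex structure $J$ on $W'$ that agrees on the end with an $\RR$-invariant, $\lambda_0$-compatible $J_0$ tailored so that the natural family of cylinders below is $J_0$-holomorphic, while remaining generic in the compact part.

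Next I would construct the model curves. For each $(i,q_2) \in \{0,1\} \times S^1$, the fiber $F_{(i,q_2)} = \pi^{-1}(i,q_2)$ admits a natural lift to a $J_0$-holomorphic cylinder $u_{(i,q_2)} : \RR \times S^1 \to \RR \times T^3$ with one positive puncture asymptotic to a simply covered Reeb orbit in $Z_0$ and another in $Z_{1/2}$. Since the Reeb orbits on $Z_0$ and $Z_{1/2}$ come in Morse--Bott $S^1$-families, these cylinders fit together, along with their $\RR$-translates, into a smooth foliation of $\RR \times (T^3 \setminus Z)$. This yields the ``asymptotic seed'' from which I would grow the foliation inside $W'$.

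The core of the proof is then a moduli space argument. Let $\mM$ be the space of (equivalence classes of) $J$-holomorphic cylinders in $W'$ asymptotic to one simply covered Reeb orbit in $Z_0$ and one in $Z_{1/2}$, homotopic in the end to the model curves above. Fredholm theory, together with the Morse--Bott asymptotic setup and a generic choice of $J$, should make $\mM$ a smooth $2$-manifold containing the $\RR$-invariant family from the end as one ``boundary'' component. Using SFT compactness \cite{SFTcompactness}, I would compactify $\mM$ to $\overline{\mM}$ and then analyze all possible degenerations. Positivity of intersections in dimension four, combined with the fact that any two curves in $\mM$ have zero algebraic intersection number, forces any nodal limit to be either a smooth cylinder or a smooth cylinder joined to a $J$-holomorphic sphere of self-intersection $-1$, i.e.~an exceptional sphere. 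Multilevel SFT breaking is ruled out by an index/action computation: the simply covered asymptotic orbits and the Morse--Bott families leave no room for additional nontrivial levels. The evaluation map $\ev : \mM \times (\RR\times S^1) \to W'$ is therefore injective away from finitely many nodal fibers, and open; a continuity/closedness argument identifies its image with $W' \setminus (Z_0 \cup Z_{1/2})$ and endows the parameter space $\mM$ with the structure of $[0,1] \times S^1$, gluing in the two pre-Lagrangian tori as the binding-like region $\hat{\pi}^{-1}(\p([0,1]\times S^1))$. The resulting map $\Pi : W' \to [0,1]\times S^1$ is the desired symplectic Lefschetz fibration restricting to $\hat{\pi}$ on the boundary, with a singular fiber exactly at each point where an exceptional sphere bubbles off.

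The step I expect to be the main obstacle is the compactness analysis, in two respects: first, the Morse--Bott nature of the Reeb orbits on $Z_0$ and $Z_{1/2}$ makes the asymptotic behavior and Fredholm setup substantially more delicate than in the nondegenerate case, so the index and regularity arguments must be carried out with care; second, ruling out multilevel SFT limits and verifying that the only possible nodal degenerations are single exceptional-sphere bubblings requires the fine intersection-theoretic and action-theoretic arguments particular to punctured curves in symplectizations with Morse--Bott ends. Once this is in hand, minimality of $W$ precludes any exceptional sphere in $W'$ and hence any singular fibers, giving the final claim.
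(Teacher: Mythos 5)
Your overall strategy is the same as the paper's: grow the Morse--Bott foliation of $\RR\times T^3$ by holomorphic cylinders (Example~\ref{ex:torsion}, Remark~\ref{remark:foliationT3}) into the completed filling via a compactness theorem, and read off the Lefschetz fibration over the annulus from the moduli space. However, there is a genuine gap at the step where you classify the nodal degenerations. You assert that positivity of intersections plus the vanishing of $i(u;u')$ forces every nodal limit to be a smooth cylinder joined to an exceptional sphere. That is not what the intersection-theoretic and index arguments give. A cylinder (genus zero, two punctures) can a priori degenerate in two ways consistent with everything you invoke: (a) a punctured component carrying both ends plus a closed sphere, or (b) two components each carrying \emph{one} positive puncture, i.e.~two planes asymptotic to orbits in $Z_0$ and $Z_{1/2}$ respectively. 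In case (b) both components are still somewhere injective, embedded, of index~$0$, and intersect exactly once transversely --- they pass every test in the compactness theorem. One cannot exclude case (b) topologically either, since for an arbitrary strong filling you do not yet know that the Reeb orbits are non-contractible in $W$. Case (b) would produce a vanishing cycle parallel to the core of the annulus fiber, so your claimed description of the singular fibers (and hence the ``no singular fibers if $W$ is minimal'' conclusion) does not follow from what you have written.

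The paper closes this gap with a separate monodromy argument: the restriction of the fibration to the two boundary circles of $[0,1]\times S^1$ gives the two components of the fibration \eqref{eqn:piT3}, both of which have trivial monodromy; yet these two monodromies must differ by a product of positive Dehn twists, one for each homologically nontrivial vanishing cycle. Since the mapping class group of the annulus is infinite cyclic, generated by the Dehn twist along the core, a nonempty product of positive such twists is never the identity, so no vanishing cycle of type (b) can occur. You need this (or an equivalent) argument to establish the ``Moreover'' clause of the theorem. The rest of your outline --- attaching the trivial cobordism and cylindrical end, the Morse--Bott Fredholm setup, ruling out extra SFT levels, and extending $\Pi$ over a neighborhood of $Z$ --- matches the paper's route through Theorem~\ref{thm:compactness}.
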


There is also a stability result for the Lefschetz fibrations considered
thus far.  Note that in the following, we don't assume the symplectic
forms $\omega_t$ are cohomologous.  \red{This result is applied in
\cite{Wendl:fiberSums} to classify strong fillings of various contact
manifolds up to symplectic deformation equivalence.}

\begin{thm}
\label{thm:stability}
If $(W,\omega_t)$ for $t \in [0,1]$ is a smooth
$1$--parameter family of strong fillings of either a planar contact manifold
$(M,\xi)$ or $(T^3,\xi_0)$, then by attaching a smooth family of 
trivial symplectic
cobordisms, one can construct a smooth family of strong fillings
$(W',\omega'_t)$ for which $\omega'_t$ is independent of $t$ near $\p W'$,
and there exists a smooth family of $\omega'_t$--symplectic
Lefschetz fibrations
$\Pi_t : W' \to \Sigma$ as in Theorems~\ref{thm:Lefschetz} and~\ref{thm:T3},
such that the critical points vary smoothly with~$t$.
\end{thm}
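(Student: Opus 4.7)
The plan is to reprise the proofs of Theorems~\ref{thm:Lefschetz} and~\ref{thm:T3} with a one-parameter family of data and to check that every step goes through smoothly in~$t$. A parametric version of Lemma~\ref{lemma:attachStein} yields a smooth family of Liouville vector fields $Y_t$ defined on a neighborhood of $\p W$; writing $\iota_{Y_t}\omega_t|_{\p W} = e^{f_t}\lambda$ for a fixed contact form $\lambda$ on $M$ and smooth $f_t : M \to \RR$, I would attach the trivial cobordisms $(\sS_{f_t}^{g_t},d(e^a\lambda))$ for a smoothly chosen family $g_t > \max_M f_t$ that is independent of $t$ outside a compact region of $M$. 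The enlarged fillings $(W',\omega'_t)$ then satisfy the required boundary-normalization condition, and on the positive cylindrical end the symplectic form is literally $d(e^a\lambda)$ for every~$t$.

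Fix now a smooth family $J_t$ of $\omega'_t$-compatible almost complex structures that are $t$-independent and translation invariant on the positive end. The proofs of Theorems~\ref{thm:Lefschetz} and~\ref{thm:T3} recover $\Pi_t$ from a compactified moduli space $\overline{\mM}(J_t)$ of embedded finite-energy $J_t$-holomorphic curves with prescribed asymptotics, and I would organize these into the parametric moduli space
$$
\widetilde{\mM} = \bigsqcup_{t \in [0,1]} \overline{\mM}(J_t) \times \{t\},
$$
whose forgetful map $\widetilde{\mM} \to [0,1]$ I claim is a smooth, proper submersion with fibers the moduli spaces supplying the Lefschetz fibrations. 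Compactness and properness come from the paper's main compactness theorem applied over the compact parameter interval, while smoothness comes from automatic transversality: the embedded curves involved have their linearized Cauchy--Riemann operators surjective for purely topological reasons (normal Chern number and Conley--Zehnder indices in the right range), and this surjectivity is a local property that persists for a smoothly varying $J_t$. The parametric implicit function theorem then upgrades $\widetilde{\mM}$ to a smooth manifold with boundary, and the fiberwise evaluation maps assemble into a smooth family of Lefschetz fibrations $\Pi_t : W' \to \Sigma$.

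The main obstacle is the smooth dependence of the critical set. Critical points of $\Pi_t$ correspond to nodal configurations in $\overline{\mM}(J_t)$, i.e.\ pairs of embedded curves sharing exactly one intersection point, and one must show that such configurations trace out a smooth $1$-dimensional submanifold of $\widetilde{\mM}$ whose projection to $[0,1]$ is a local diffeomorphism; in particular no new critical points appear or disappear at isolated values of~$t$. This reduces to a parametric transversality statement for the Fredholm problem cutting out nodal curves, which I expect to follow from the same automatic transversality as above together with positivity of intersections, the latter ensuring that an isolated intersection of two embedded $J_t$-holomorphic curves is transverse and persists under small deformations of both the curves and $J_t$. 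Granted this, the critical values of $\Pi_t$ depend smoothly on~$t$, and the family $\Pi_t$ has all the asserted properties.
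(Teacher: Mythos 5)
Your overall architecture is the same as the paper's: attach a smooth family of trivial cobordisms so that $\omega'_t$ is fixed near $\p W'$ (to achieve this you should take the upper boundary function of the cobordism to be a single constant $R$ independent of $t$; your phrase ``independent of $t$ outside a compact region of $M$'' is garbled since $M$ is closed, but the intent is clear), glue on a cylindrical end, choose a smooth family $J_t$ standard on the end, and run the compactness and gluing analysis parametrically. The paper does exactly this, reducing the theorem to Theorem~\ref{thm:compactnessHomotopy}, and your treatment of the critical locus --- unobstructed index--$0$ components deforming uniquely, with positivity of intersections forcing the deformed pair to remain a nodal curve in the right moduli space, so that the nodal locus projects locally diffeomorphically to $[0,1]$ --- is the argument actually used there.

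There is, however, one genuine gap: you never require the family $\{J_t\}$ to be a \emph{generic} homotopy, and you assert that the needed transversality holds ``for purely topological reasons.'' Automatic transversality (Prop.~\ref{prop:automatic}) only guarantees that the embedded curves which \emph{do} arise are unobstructed; it does nothing to exclude worse degenerations in the SFT limit --- nodal configurations with multiply covered components, components of negative index, ghost bubbles, or more than two components --- and these are exactly what would destroy the smooth structure of your parametrized space $\widetilde{\mM}$ and allow critical points to be born or die at isolated values of $t$. In the planar case these degenerations are ruled out only by choosing the homotopy $\{J_t\}$ generically in the compact part, which for a $1$--parameter family yields merely $\ind \ge -1$ for somewhere injective components not contained in the cylindrical end; one must then invoke the parity argument (all punctures of the leaves are odd, hence all ends of such components are asymptotic to odd covers of odd orbits, forcing their index to be even and therefore $\ge 0$) to recover the index bound that confines the degenerations to two-component, index--$0$ nodal curves. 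This step, carried out in the proof of Theorem~\ref{thm:compactnessHomotopy}, is not optional and is not supplied by automatic transversality; your proof needs to either impose genericity of the homotopy and reproduce this argument, or (as in the exact $T^3$ setting of Remark~\ref{remark:noGenericity}) give a topological reason why nodal limits cannot occur at all.
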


\subsection{Classifying \red{strong} fillings of $T^3$}

\red{Stipsicz showed using a gauge theory argument
\cite{Stipsicz:gaugeStein} that all Stein fillings of~$T^3$ are 
homeomorphic to $T^2 \times \DD$,}
and conjectured that this result can be strengthened
to a diffeomorphism.  In fact, more turns out to be true: \red{by
Theorem~\ref{thm:T3}, every minimal strong filling $W$ of $T^3$ admits a
symplectic fibration over the annulus with cylindrical fibers.}
One can now repeat this construction starting from a different
decomposition of $T^3$ (corresponding to a change in the 
$(q_1,q_2)$--coordinates), and thus show that $W$ admits two \red{symplectic
fibrations} over the annulus, with cylindrical fibers such that
any two fibers from each fibration intersect each other once transversely.
This provides a diffeomorphism from $W$ with an attached cylindrical end
to $T^*T^2$, and in \S\ref{sec:T3} 
we will use Moser isotopy arguments to show:

\begin{thm}
\label{thm:SteinT3}
\red{All minimal strong fillings of $T^3$ are symplectically deformation
equivalent, and every exact filling of $T^3$ 
is symplectomorphic to a star shaped domain
in $(T^*T^2,\omega_0)$.}
\end{thm}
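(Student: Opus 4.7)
My plan is to apply Theorem~\ref{thm:T3} twice, to two mutually transverse decompositions of $(T^3,\xi_0)$, combine the resulting fibrations into a diffeomorphism to a star shaped domain in $T^*T^2$, and then invoke a Moser argument. Since an exact filling admits a global primitive of $\omega$ and therefore contains no closed symplectic surface, exact implies minimal; for the deformation equivalence statement I may likewise reduce to the minimal case by blowing down exceptional spheres. So I assume throughout that $(W,\omega)$ is a minimal strong filling.

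First, Theorem~\ref{thm:T3} applied to $\pi : T^3 \setminus Z \to \{0,1\}\times S^1$ from~\eqref{eqn:piT3} yields, after attaching a trivial symplectic cobordism, a symplectic Lefschetz fibration $\Pi_1 : W \to [0,1]\times S^1$ with annular fibers and no singular fibers (by minimality and the final assertion of Theorem~\ref{thm:T3}). I then take an $\SL(2,\ZZ)$ change of basis on $T^2 = \RR^2/\ZZ^2$ such as $q_1 \leftrightarrow q_2$, producing a second decomposition $\pi' : T^3 \setminus Z' \to \{0,1\}\times S^1$ of the same form, whose pre-Lagrangian binding tori $Z'$ meet $Z$ transversely and whose compactified fibers each meet every fiber of $\pi$ transversely in exactly one point. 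Running Theorem~\ref{thm:T3} again (attaching a further trivial cobordism as needed) gives a second symplectic fibration $\Pi_2 : W \to [0,1]\times S^1$ with the analogous boundary behavior.

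The crucial geometric claim is that fibers of $\Pi_1$ and $\Pi_2$ continue to intersect in exactly one transverse point throughout the interior of $W$. This follows from positivity of intersections for the underlying $J$--holomorphic curves together with the homological count of~$1$ fixed on $\partial W$. Consequently the pair $(\Pi_1,\Pi_2)$ realizes $W$ as the total space of a $2$--disc bundle over $T^2$ (after identifying the appropriate quotient of the two base annuli with $T^2$), giving a diffeomorphism $\Phi : W \to \sS$ onto a star shaped domain $\sS \subset T^*T^2$ that recovers the natural identification $T^3 \cong S^*T^2$ on the boundary and sends $\Pi_1,\Pi_2$ to the standard pair.

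The final step is a Moser isotopy. By construction $\omega$ and $\Phi^*\omega_0$ agree near $\partial W$, both vanish on every disc fiber of the bundle, and they assign equal areas to sections coming from the base $T^2$; when $\omega$ is exact they are therefore cohomologous. Moser's trick on $\omega_t = (1-t)\omega + t\Phi^*\omega_0$, combined with the outward convexity of $\partial W$, yields a symplectomorphism to $(\sS,\omega_0)$ via a compactly supported isotopy. For deformation equivalence of two arbitrary minimal strong fillings, the same construction realizes each as a star shaped domain and a smooth family of star shaped domains interpolates them; alternatively, Theorem~\ref{thm:stability} supplies a smooth family of Lefschetz fibrations along a convex-combination path of the two symplectic forms. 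The main obstacle I anticipate is the second step: combining two separate applications of Theorem~\ref{thm:T3} on a single enlarged filling and verifying that the two annular foliations intersect everywhere in the prescribed one-point pattern. Positivity of intersections is the key tool, but compatible attachment of the two trivial cobordisms and ruling out collisions of binding components or fiber degenerations in the interior both demand careful arrangement.
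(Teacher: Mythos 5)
Your overall strategy---two transverse annular fibrations from Theorem~\ref{thm:T3}, one transverse intersection point per pair of fibers, then Moser---is indeed the paper's strategy, but the step where you assert that the resulting diffeomorphism $\Phi : W \to \sS \subset T^*T^2$ ``recovers the natural identification $T^3 \cong S^*T^2$ on the boundary'' hides the main difficulty, and as stated it is false in general. Each cylinder in the moduli space $\mM_1$ has two ends, asymptotic to orbits in the Morse--Bott tori $\{\theta=0\}$ and $\{\theta=1/2\}$; parametrizing $\mM_1 \cong \RR\times S^1$ by asymptotic evaluation at one end versus the other gives two parametrizations that differ by a compactly supported diffeomorphism of $\RR\times S^1$, and the resulting class in $\pi_1(S^1)=\ZZ$ (the ``offset'' by which the $q_2$--coordinate of one asymptotic orbit twists relative to the other as you sweep through the foliation) is a homotopy invariant of the filling together with its foliation. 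When this invariant is nonzero, no choice of $\Phi$ built from the two foliations can agree with the standard identification $[R,\infty)\times T^3 \cong T^*T^2 \setminus K$ near infinity. This matters because your Moser step needs $\omega$ and $\Phi^*\omega_0$ to agree near $\p W$ (equivalently, the Moser vector field to be compactly supported on $W^\infty$), and that is exactly what fails. The paper's resolution is to construct a whole family of model fillings $(W_\sigma,\omega_\sigma)$ by Luttinger surgery along the zero section of $T^*T^2$, carrying holomorphic foliations realizing every possible offset, match the given filling to the model with the correct invariant (Lemma~\ref{lemma:workhorse}), and only afterwards use explicit Liouville flows on $W_\sigma$ to land in a star shaped domain. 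Without some substitute for this step your argument does not close up.

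Two smaller points. First, the nondegeneracy of $\omega_t = (1-t)\omega + t\,\Phi^*\omega_0$ is not automatic and you do not justify it; the paper's argument is that the pulled-back almost complex structure tames \emph{both} endpoints of the path, because each tangent space splits into a pair of complementary symplectic planes tangent to the two foliations, and hence tames every convex combination. This taming argument already gives the deformation equivalence of arbitrary minimal strong fillings (which need not be exact, so your proposed interpolation ``through star shaped domains'' is not available for that half of the statement); the Moser flow is invoked only in the exact case, where one also needs primitives of the two forms that agree near infinity so that the flow is compactly supported. Second, the obstacle you flag at the end (combining the two applications of Theorem~\ref{thm:T3} and establishing the one-point intersection pattern) is comparatively routine: both foliations are produced from the same contact form and almost complex structure on the cylindrical end, and $i(u_1;u_2)=1$ follows from computing it near the boundary of the moduli spaces and invoking homotopy invariance of the Siefring intersection number. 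The real work is at infinity, not in the interior.
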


\begin{cor}
Every \red{minimal strong} filling of $T^3$, \red{and in particular every
Stein filling}, is diffeomorphic to $T^2 \times \DD$.
\end{cor}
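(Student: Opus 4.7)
The plan is to derive the corollary directly from Theorem~\ref{thm:SteinT3}. The only substantive check is that the standard filling $T^2\times\DD$ is itself a minimal strong filling of $(T^3,\xi_0)$, so that it lies in the equivalence class described by that theorem.

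To see this, note first that $(T^2\times\DD,\omega_0)$ is an exact (in particular strong) filling, via the formula $\omega_0=-dd^\CC f$ with $f(q,p)=\frac{1}{2}|p|^2$. For minimality I would use a homological argument: since $\DD$ is contractible, $T^2\times\DD$ deformation retracts onto $T^2$, so $H_2(T^2\times\DD;\ZZ)\cong H_2(T^2;\ZZ)\cong\ZZ$, generated by the zero section $T^2\times\{0\}$. The normal bundle of the zero section is trivial (one verifies self-intersection $=0$ geometrically by the disjoint perturbation $T^2\times\{0\}$ versus $T^2\times\{\epsilon\}$), so the self-intersection form on $H_2(T^2\times\DD)$ vanishes identically. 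In particular no homology class can be represented by an embedded $(-1)$-sphere, so $T^2\times\DD$ is minimal.

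The first assertion of the corollary is then immediate: by Theorem~\ref{thm:SteinT3}, any minimal strong filling of $(T^3,\xi_0)$ is symplectically deformation equivalent to $T^2\times\DD$, which in particular provides a diffeomorphism of the underlying smooth $4$--manifolds. For the ``in particular'' clause about Stein fillings, rather than verifying that Stein fillings are always minimal, I would appeal to the second half of Theorem~\ref{thm:SteinT3}: every Stein filling is exact, hence symplectomorphic to a star shaped domain in $(T^*T^2,\omega_0)$, and every such domain is diffeomorphic to $T^2\times\DD$ by radial scaling in the fiber direction. No serious obstacle arises: all the substantive content is carried by Theorem~\ref{thm:SteinT3}, and the present corollary is an essentially formal consequence once one records the trivial homological computation that certifies the standard filling as minimal.
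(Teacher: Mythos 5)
Your proposal is correct and follows essentially the same (one-line) route as the paper, which states the corollary as an immediate consequence of Theorem~\ref{thm:SteinT3}; your verification that $(T^2\times\DD,\omega_0)$ is itself a minimal strong filling (so that it sits in the deformation class) is exactly the small check needed. Your handling of the Stein case via exactness and star-shaped domains is equally valid as the alternative observation that Stein (indeed exact) fillings contain no closed symplectic surfaces and are therefore minimal.
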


The first uniqueness result of this type was
obtained by Eliashberg \cite{Eliashberg:diskFilling}, who showed that all
Stein fillings of $S^3$ are diffeomorphic to the $4$--ball.  Shortly 
afterwards, McDuff \cite{McDuff:rationalRuled} classified Stein fillings
of the Lens spaces $L(p,1)$ with their standard contact structures up to
diffeomorphism, showing in particular that they are unique for all $p \ne 4$.
McDuff argued by compactification in order to apply her classification
results for rational and ruled symplectic $4$--manifolds, and several
other uniqueness and finiteness results have since been obtained using
similar ideas, e.g.~\cites{Lisca:fillingsLens,OhtaOno:simpleSingularities}.
\red{Many of these uniqueness results can be
recovered, and some of them strengthened or generalized, using the punctured 
holomorphic curve techniques introduced here (cf.~\cite{Wendl:fiberSums}).}
By contrast, there are also contact manifolds that admit infinitely many
non-diffeomorphic or non-homeomorphic Stein fillings: see
\cite{AkhmedovEtnyreMarkSmith} and the references mentioned therein.

The aforementioned result of McDuff for $L(p,1)$ was strengthened to
uniqueness up to Stein deformation equivalence by R.~Hind \cite{Hind:Lens},
using a construction similar to ours, though the technical arguments are
somewhat different.  Hind uses a foliation by $J$--holomorphic planes
asymptotic to a multiply covered orbit; since planes cannot undergo nodal 
degenerations unless there are closed curves involved, singular fibers are 
ruled out and the result is a smooth symplectic
fibration outside of the asymptotic orbit.  This fibration
can then be used to construct a plurisubharmonic function
with control over the critical points, thus leading to a uniqueness
result up to Stein homotopy.  It is plausible that one could apply 
Hind's idea to our construction
and further sharpen our classification of Stein fillings for $T^3$,
though we will not pursue this here.

Another consequence of Theorem~\ref{thm:SteinT3} (and also a step in its
proof) is that every \red{exact} filling of $T^3$ becomes symplectomorphic 
to $(T^*T^2,\omega_0)$ after attaching a positive cylindrical end.
It is then natural to ask about the topology of the compactly supported
symplectomorphism group.  In \S\ref{sec:T3} we will prove:

\begin{thm}
\label{thm:sympT3}
The group $\Symp_c(T^*T^2,\omega_0)$ of symplectomorphisms with compact
support is contractible.
\end{thm}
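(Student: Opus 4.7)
The plan is to use two transverse smooth families of $J$--holomorphic foliations on $T^*T^2$ to reduce the problem to a classical fact in two dimensions. Writing $T^*T^2 = T^*S^1 \times T^*S^1$, the standard integrable complex structure $J_0$ admits two natural transverse foliations $\fF^{(i)}_0$ ($i=1,2$) by symplectic cylinders, namely the product foliations whose leaves are copies of one of the two $T^*S^1$ factors. Inside the standard star shaped domain $T^2 \times \DD$ these restrict to the two foliations by symplectic annuli arising from Theorem~\ref{thm:T3} applied to two appropriately chosen contact forms on $(T^3,\xi_0)$, exactly as used in the proof of Theorem~\ref{thm:SteinT3}.

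Given a compact parameter space $X$ and a family $\{\phi_x\}_{x\in X} \subset \Symp_c(T^*T^2,\omega_0)$, set $J_x := (\phi_x)_*J_0$. This is a smooth family of tame almost complex structures, and since each $\phi_x$ is the identity outside a uniform compact set $K$, we have $J_x = J_0$ outside $K$. Applying the parametric existence and uniqueness arguments from Theorems~\ref{thm:T3} and~\ref{thm:stability} to an exhaustion of $T^*T^2$ by star shaped domains adapted to $J_x$, and using that any $J_x$--holomorphic foliation with controlled asymptotics must agree with the standard one outside $K$, one obtains smooth families $\fF^{(1)}_x, \fF^{(2)}_x$ of transverse $J_x$--holomorphic foliations of $T^*T^2$ by symplectic cylinders. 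Each family agrees with the corresponding $\fF^{(i)}_0$ outside $K$, and tautologically $(\phi_x)_*\fF^{(i)}_0 = \fF^{(i)}_x$.

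The next step is to smoothly retract $\fF^{(i)}_x$ back to $\fF^{(i)}_0$, parametrically in $x$, through foliations that always coincide with $\fF^{(i)}_0$ outside $K$. This retraction (carried out by normal form reduction as in \S\ref{sec:T3} followed by a Moser-type argument with compactly supported Hamiltonians) lifts to a smooth family of isotopies $\phi_x \rightsquigarrow \phi_x'$ in $\Symp_c(T^*T^2,\omega_0)$ such that each $\phi_x'$ preserves both $\fF^{(1)}_0$ and $\fF^{(2)}_0$. Since each leaf of $\fF^{(1)}_0$ meets each leaf of $\fF^{(2)}_0$ in exactly one point, preservation of both foliations forces $\phi_x'$ to be a product $\phi_x'(u,v) = (\beta_x(u), \alpha_x(v))$ with $\alpha_x, \beta_x \in \Symp_c(T^*S^1)$. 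The contractibility of $\Symp_c(T^*T^2,\omega_0)$ thus reduces to the contractibility of $\Symp_c(T^*S^1)$, a classical statement for the open two-dimensional cylinder which follows from Moser's trick together with the contractibility of the identity component of the compactly supported orientation-preserving diffeomorphism group of $S^1 \times \RR$.

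The main obstacle is carrying out the parametric $J$--holomorphic foliation argument on the open manifold $T^*T^2$: Theorems~\ref{thm:T3} and~\ref{thm:stability} are stated for compact strong fillings, so one must first verify that the relevant compactness and uniqueness results for the foliations (and their families) extend to the setting of an open convex symplectic manifold with controlled asymptotics at infinity. A secondary subtlety lies in ensuring that the retraction of $\fF^{(i)}_x$ to $\fF^{(i)}_0$ lifts to \emph{compactly supported} symplectomorphisms rather than ones merely agreeing with the identity outside $K$; this can be arranged by interpolating the relevant Hamiltonians against suitable cutoff functions once one knows the foliations already agree with the standard ones outside $K$.
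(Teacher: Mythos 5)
Your overall strategy (two transverse families of cylinders, intersection points giving a product structure, Moser at the end) is the same as the paper's, but the proposal has a genuine gap at its central step and an error at its final step. The gap: everything rests on the claim that the pairs of foliations $\fF^{(i)}_x = (\phi_x)_*\fF^{(i)}_0$ can be retracted, parametrically in $x$, to the standard pair through foliations that are standard near infinity. Note that the existence of the $\fF^{(i)}_x$ is tautological (they are pushforwards), so no holomorphic curve theory is used up to that point; the entire content of the theorem is in the retraction, and you do not construct it. A ``Moser-type argument with compactly supported Hamiltonians'' cannot produce it: Moser isotopes symplectic forms, it does not interpolate between holomorphic foliations for different almost complex structures. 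The mechanism that actually works --- and is the paper's proof --- is to set $J_x := \phi_x^* J_0$ for $x$ in the sphere of parameters, extend this to a family $\{J_x\}_{x \in B^{n+1}}$ of $\omega_0$--compatible structures standard at infinity (using contractibility of the space of such $J$), and invoke the parametric compactness theorem (Theorem~\ref{thm:compactnessHomotopy} together with Remark~\ref{remark:noGenericity}: exactness of $\omega_0$ and non-contractibility of the Reeb orbits rule out nodal degenerations, so no genericity is required) to obtain foliations for every $x$ in the ball; one must then still normalize the asymptotics as in Lemma~\ref{lemma:workhorse} and run Moser with primitives $\lambda_x$ agreeing with $\lambda_0$ near infinity --- these are automatically compactly supported corrections, whereas cutting off the Moser vector field by hand would destroy the identity $(\varphi^t)^*\omega^{(t)} = \omega_0$. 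Without the extension of $J$ over the ball there is no retraction. (Your worry about exhausting $T^*T^2$ by star shaped domains is misplaced: Theorems~\ref{thm:compactness} and~\ref{thm:compactnessHomotopy} are already stated for the noncompact $W^\infty$, and $T^*T^2$ with cylindrical end is exactly of that form.)

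The error: $\Symp_c(T^*S^1) = \Symp_c(S^1 \times \RR)$ is \emph{not} contractible; it is not even connected. The Dehn twist $(q,p) \mapsto (q + g(p), p)$, with $g \equiv 0$ for $p \le 0$ and $g \equiv 1$ for $p \ge 1$, is compactly supported and area preserving, but the class in $H_1(S^1\times\RR) \cong \ZZ$ of the compactly supported $1$--cycle $\tau(\ell) - \ell$, where $\ell$ is a properly embedded line, is $1$ and obstructs any compactly supported isotopy to the identity. Fortunately you do not need the claim: a compactly supported symplectomorphism of $T^*T^2$ of product form $(u,v) \mapsto (\beta(u),\alpha(v))$ is automatically the identity, since choosing $u$ outside a compact set forces $\alpha(v) = v$ for all $v$, and symmetrically for $\beta$. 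So once the retraction is actually in place the endgame is immediate, and the appeal to a two-dimensional ``classical fact'' should be deleted.
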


\subsection{Obstructions to fillability}

The results stated so far all 
start with the assumption that a filling exists, and
then use the existence of some $J$--holomorphic curves to deduce properties
of the filling.  In other situations, the same argument can sometimes lead
to a contradiction, thus defining an obstruction to filling---to 
understand this, we must first recall
some general notions about holomorphic curves in symplectizations
and finite energy foliations.  

If $\lambda$ is a contact form on $M$, then
the \emph{Reeb vector field} $X_\lambda \in \Vectors(M)$ is defined by the
conditions
$$
d\lambda(X_\lambda,\ ) \equiv 0,
\qquad
\lambda(X_\lambda) \equiv 1.
$$
The \emph{symplectization} $\RR \times M$ then admits a natural splitting of
its tangent bundle $T(\RR\times M) = \RR \oplus \RR X_\lambda \oplus \xi$;
let us denote the $\RR$--coordinate on $\RR\times M$ by $a$ and let
$\p_a$ denote the corresponding unit vector field.
There is now a nonempty and contractible space $\jJ_\lambda(M)$ of almost 
complex structures $J$ on $\RR\times M$ having the following properties:
\begin{itemize}
\item
$J$ is invariant under the $\RR$--action by translation on $\RR\times M$
\item
$J \p_a = X_\lambda$
\item
$J \xi = \xi$ and $J|_{\xi}$ is compatible with the symplectic structure
$d\lambda|_{\xi}$
\end{itemize}
Given $J \in \jJ_\lambda(M)$, we will consider $J$--holomorphic curves
$$
u : (\dot{\Sigma},j) \to (\RR\times M,J)
$$
where $(\Sigma,j)$ is a closed Riemann surface, $\dot{\Sigma} = \Sigma
\setminus\Gamma$ is the punctured surface determined by some finite subset
$\Gamma \subset\Sigma$, and $u$ has \emph{finite energy} in the sense
defined in \cite{Hofer:weinstein}.  The simplest examples of such curves
are the so-called \emph{orbit cylinders}
$$
\tilde{x} : \RR\times S^1 \to \RR\times M : (s,t) \mapsto (Ts,x(Tt)),
$$
for any $T$--periodic orbit $x : \RR \to M$ of $X_\lambda$.
We will not need to recall the precise
definition of the energy here, only that its finiteness constrains the
behavior of $u$ at the punctures: each puncture is either removable or
represents a positive/negative \emph{cylindrical end}, at which $u$ 
approximates an orbit cylinder,
asymptotically approaching a (perhaps multiply covered) periodic orbit
in $\{\pm\infty\} \times M$.

Recall that a $T$--periodic orbit is called \emph{nondegenerate} if the 
\red{transversal restriction of the} linearized time~$T$ flow along the orbit
does not have $1$ as an eigenvalue.
More generally, a \emph{Morse-Bott submanifold} of $T$--periodic orbits is a
submanifold $N \subset M$ consisting of $T$--periodic orbits such that the
$1$--eigenspace of the linearized flow is always precisely the tangent space
to~$N$.  We say that $\lambda$ is \emph{Morse-Bott} if every periodic orbit
belongs to a Morse-Bott submanifold; this will be a standing assumption 
throughout.  Note that a nondegenerate orbit is itself a ($1$--dimensional)
Morse-Bott submanifold.

Now consider a compact $3$--dimensional submanifold $M_0 \subset M$, possibly 
with boundary, such that $\p M_0$ is a Morse-Bott submanifold.  The following
objects were originally considered in \cite{HWZ:foliations}:

\begin{defn}
A \emph{finite energy foliation} $\fF$ on $(M_0,\lambda,J)$ is a foliation
of $\RR \times M_0$ with the following properties:
\begin{itemize}
\item
For any leaf $u \in \fF$, the $\RR$--translation of $u$ by any real number
is also a leaf in $\fF$.
\item
Every $u \in \fF$ is the image of an embedded finite energy $J$--holomorphic
curve satisfying a uniform energy bound.
\end{itemize}
\end{defn}
In light of the second requirement, we shall often blur the distinction between
leaves and the $J$--holomorphic curves that parametrize them.
The definition has several immediate consequences: most notably,
let $\pP_\fF$ denote the set of all simple periodic orbits that have covers
occurring as asymptotic
orbits for leaves of $\fF$.  Then an easy positivity of intersections argument
(see e.g.~\cite{Wendl:thesis}) implies that for each $\gamma \in \pP_\fF$,
the orbit cylinder $\RR\times \gamma$ is a leaf in $\fF$, and every leaf that
isn't one of these remains embedded under the natural projection
$$
\pi : \RR \times M \to M.
$$
In fact, abusing notation to regard $\pP_\fF$ as a subset
of $M$, the quotient $\fF / \RR$ defines a smooth foliation of $M_0 \setminus 
\pP_\fF$ by embedded surfaces transverse to $X_\lambda$.  These projected
leaves are noncompact and have closures with boundary in 
$\pP_\fF$.  It is easy to see from this that $\p M_0 \subset \pP_\fF$.

As we will see in Example~\ref{ex:torsion}, it is relatively easy to
construct finite energy foliations in various simple local models of contact
manifolds, and this will suffice for the obstruction to fillability that
we have in mind.  Global constructions are harder but do exist, for
instance on the tight $3$--sphere \cite{HWZ:foliations}, on overtwisted
contact manifolds \cite{Wendl:OTfol} and more generally on planar contact
manifolds \cites{Abbas:openbook,Wendl:openbook}.

\begin{defn}
We will say that a finite energy foliation $\fF$ on $(M_0,\lambda,J)$ is
\emph{positive} if every leaf that isn't an orbit cylinder has
only positive ends.  
\end{defn}
\begin{defn}
A leaf $u \in \fF$ will be called an \emph{interior}
leaf if it is not an orbit cylinder and all its ends belong to Morse-Bott 
submanifolds that lie in the
interior of $M_0$.  
\end{defn}
\begin{defn}
A leaf $u \in \fF$ will be called \emph{stable}
if it has genus~$0$, all its punctures are \emph{odd} and $\ind(u) = 2$
(see the appendix for the relevant technical definitions).
\end{defn}
This notion of a \emph{stable} leaf is meant to ensure that $u$
behaves well in the deformation and intersection theory of $J$--holomorphic
curves.  In practice, these
conditions are easy to achieve for leaves of genus zero.
\begin{defn}
A leaf $u \in \fF$ will be called \emph{asymptotically simple} if all its
asymptotic orbits are simply covered and belong to pairwise disjoint
Morse-Bott families; moreover every nontrivial Morse-Bott family among
these is a circle of orbits foliating a torus.
\end{defn}
\begin{remark}
\label{remark:asympSimple}
This last condition can very likely be relaxed, but it's satisfied
by most of the interesting examples I'm aware of so far and will simplify
the compactness argument in \S\ref{sec:compactness} considerably, particularly
in proving that limit curves are somewhere injective.
\end{remark}

\begin{thm}
\label{thm:obstruction}
Suppose $(M,\xi)$ has a Morse-Bott contact form $\lambda$, almost
complex structure
$J \in \jJ_\lambda(M)$ and compact $3$--dimensional 
submanifold $M_0$ with Morse-Bott boundary,
such that $(M_0,\lambda,J)$ admits a positive finite energy foliation $\fF$
containing an interior, stable and asymptotically simple leaf $u_0 \in \fF$.  
Assume also that either of the following is true:
\begin{enumerate}
\item $M_0 \subsetneq M$.
\item
There exists a leaf $u' \in \fF$ which is not an orbit cylinder and
is \emph{different} from some interior stable leaf $u_0$ in the following sense: 
either $u_0$ and
$u'$ are not diffeomorphic, or if they are, then there is no bijection between
the ends of $u_0$ and $u'$ such that the asymptotic orbits of $u_0$ are all
homotopic along Morse-Bott submanifolds to the corresponding asymptotic
orbits of $u'$.
\end{enumerate}
Then $(M,\xi)$ is not strongly fillable.
\end{thm}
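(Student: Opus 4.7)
The plan is to argue by contradiction. Suppose $(M,\xi)$ admits a strong filling $(W,\omega)$; using Lemma~\ref{lemma:attachStein} I would attach a positive cylindrical end $([0,\infty) \times M, d(e^a\lambda))$ to obtain a noncompact symplectic manifold $\hat{W}$ and an $\omega$-compatible almost complex structure $\hat{J}$ on $\hat{W}$ that restricts to $J$ on the cylindrical end. The strategy is to push the stable leaf $u_0$ down into $\hat{W}$ and use the compactness results of \S\ref{sec:compactness} to extend the foliation $\fF$ to a smooth $\hat{J}$-holomorphic foliation of an open subset of $\hat{W}$ large enough to contradict the hypotheses.

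Consider the moduli space $\mM := \mM(u_0,\hat{J})$ of somewhere injective, finite-energy $\hat{J}$-holomorphic curves in $\hat{W}$ with the same genus, the same number of punctures, and asymptotic Morse-Bott families matching those of $u_0$ (recall that positivity of $\fF$ forces $u_0$ to have only positive ends). Stability of $u_0$ (genus $0$, only odd punctures, $\ind(u_0) = 2$) provides automatic transversality, so $\mM$ is a smooth $2$-manifold near $u_0$, while asymptotic simplicity of $u_0$ ensures that the intersection theory of punctured curves applies cleanly, forcing nearby elements of $\mM$ to be pairwise disjoint embeddings. The $\RR$-translates of $u_0$ inside $[0,\infty) \times M$, together with the nearby leaves of $\fF$, already parametrize an open chart of $\mM$ lying entirely in the cylindrical end.

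Now push these curves downward into $W$ by lowering the translation parameter. The main analytic input is the compactness theorem of \S\ref{sec:compactness}: positivity of $\fF$ and asymptotic simplicity of $u_0$ together rule out broken SFT buildings with nontrivial components in the symplectization (the would-be upper levels must consist entirely of orbit cylinders of $\fF$) as well as most nodal degenerations, leaving at worst the bubbling off of an exceptional sphere. After blowing down such bubbles if necessary, one obtains a compactified $2$-dimensional family of embedded $\hat{J}$-holomorphic curves whose image $\Omega \subset \hat{W}$ is open, relatively closed, and foliated by the curves of $\mM$; since $\hat{W}$ is connected, $\Omega = \hat{W}$.

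The contradiction then arises in each case. In case (1), no curve of $\mM$ can pass through a point $(a,x) \in [0,\infty) \times M$ with $a$ large and $x \in M \setminus M_0$, since the asymptotic orbits of $u_0$ all lie in $\interior M_0$; this is incompatible with $\Omega = \hat{W}$. In case (2), applying the same construction to $u'$ yields a second family $\mM(u')$; since $u_0$ and $u'$ are disjoint leaves of $\fF$ with topologically distinct asymptotic data, positivity of intersections forces the homotopy-invariant intersection count between representatives of $\mM(u_0)$ and $\mM(u')$ to vanish in the cylindrical end, yet curves of $\mM(u_0)$ pushed deeply into $W$ are forced to meet the foliated region of $\mM(u')$, producing the required contradiction. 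The main obstacle is the compactness and intersection-theoretic analysis carried out in \S\ref{sec:compactness}: verifying that positivity and asymptotic simplicity genuinely preclude all broken limits except those matched by orbit cylinders, and that any nodal degenerations are exceptional spheres that can be blown down without destroying the foliation, is the technical heart of the proof.
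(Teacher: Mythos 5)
Your overall strategy coincides with the paper's: attach the cylindrical end, apply the compactness machinery of \S\ref{sec:compactness} to spread the foliation through $W^\infty$, and read off the contradiction from the behavior near infinity. Case (1) is handled exactly as in the paper (Theorem~\ref{thm:compactness} already yields $M_0 = M$), and your remark about blowing down bubbles is unnecessary --- the nodal curves are finitely many isolated singular fibers confined to the compact part, so they never interfere with either contradiction.

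Your treatment of case (2), however, has a gap. You propose ``applying the same construction to $u'$'' to produce a second foliated family $\mM(u')$, but the hypotheses give you no right to do this: $u'$ is only assumed to be a non-orbit-cylinder leaf that is \emph{different} from $u_0$; it is not assumed stable or asymptotically simple, so neither the automatic transversality nor the compactness arguments apply to it, and no moduli space $\mM(u')$ with the properties you need is available. Moreover, even granting two foliations of all of $W^\infty$, the statement that curves from the two families are ``forced to meet'' is not by itself a contradiction: the vanishing of $i(\cdot\,;\cdot)$ only excludes \emph{isolated} intersections, so a curve of $\mM(u_0)$ meeting $u'$ could simply coincide with it. The paper's route avoids both problems: since $M_0 = M$, pick $p$ on the single fixed leaf $u'$ and consider the curves $u_n \in \mM_0$ through $(n,p)$; by Theorem~\ref{thm:compactness} a subsequence converges to the leaf of $\fF$ through $p$, namely $u'$, forcing $u'$ to be diffeomorphic to $u_0$ with asymptotic orbits in the same Morse-Bott families --- contradicting ``different.'' Alternatively, one can salvage your intersection-theoretic variant by keeping $u'$ as a single fixed embedded curve: $i(u_n;u')=0$ plus positivity of intersections forces the curve of $\mM_0$ through a point of $u'$ to have the same image as $u'$, whence $u' \in \overline{\mM}_0$ and the same contradiction follows. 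Either repair is needed to make case (2) complete.
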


The idea behind this obstruction is that if $(M,\xi)$ contains such a foliation
and is fillable, one can extend the foliation into the filling and derive a
contradiction by following the family of holomorphic 
curves along a path leading either outside of $M_0$ or to a ``different'' leaf
$u' \in \fF$.  As we'll note in Remark~\ref{remark:Weinstein}, a similar
argument leads to a proof of the Weinstein conjecture whenever a subset of
$M$ admits a finite energy foliation with an interior, stable 
and asymptotically simple leaf.

\begin{example}[Overtwisted contact structures]
It was shown in \cite{Wendl:OTfol} that every overtwisted contact
manifold globally admits a finite energy foliation satisfying the
conditions of Theorem~\ref{thm:obstruction}, so this implies a new 
\red{(admittedly much harder)} proof of
the classic Eliashberg-Gromov result that all strongly fillable contact 
structures are tight (see also Remark~\ref{remark:weakly}).  
The foliation in question is produced by starting from a
planar open book decomposition in $S^3$ and performing Dehn surgery and
Lutz twists along a transverse link: each component of the link is surrounded
by a torus which becomes a Morse-Bott submanifold in the foliation
(see Figure~\ref{fig:overtwisted}).  Note that an easier proof that
strongly fillable manifolds are tight 
is possible using the result for Giroux torsion
below; cf.~\cite{Gay:GirouxTorsion}*{Corollary~5}.
\end{example}

\begin{figure}
\includegraphics{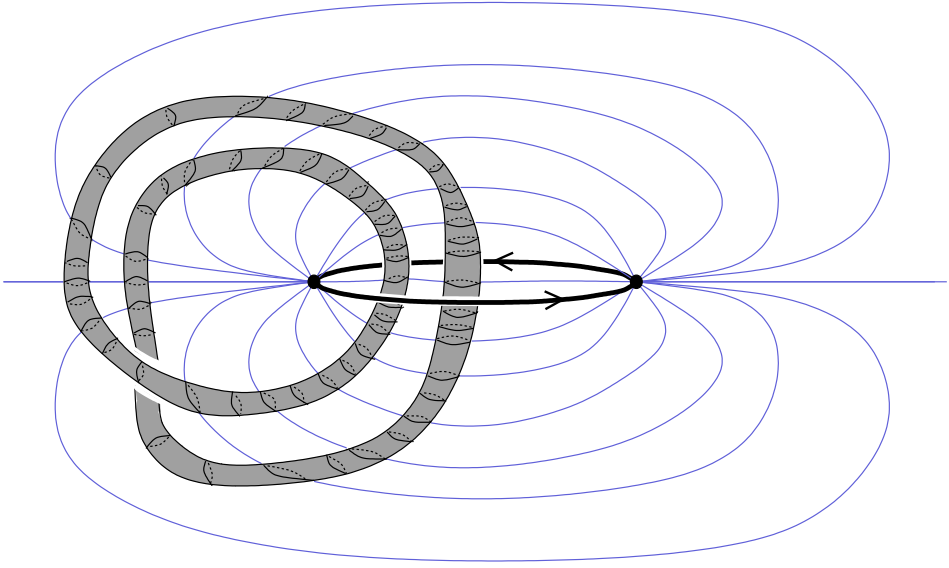}
\caption{\label{fig:overtwisted} 
A global finite energy foliation produced from a planar open book decomposition
on $S^3$ by surgery along a transverse link.  Any overtwisted contact manifold
can be foliated this way, giving a new proof that strongly fillable contact
manifolds are tight.}
\end{figure}

\begin{example}[Giroux torsion]
\label{ex:torsion}
Let $T^2 = S^1 \times S^1$
and $T = T^2 \times [0,1]$ with coordinates $(q_1,{q_2},\theta)$.
Given smooth functions $f, g : [0,1] \to \RR$, a $1$--form
\begin{equation*}
\label{eqn:lambdaGeneral}
\lambda = f(\theta)\ dq_1 + g(\theta)\ d{q_2}
\end{equation*}
is a positive contact form 
if and only if $D(\theta) := f(\theta) g'(\theta) - f'(\theta) g(\theta) > 0$,
meaning the path $\theta \mapsto (f,g) \in \RR^2$ winds counterclockwise 
around the origin.  An important special case is the $1$--form
$$
\lambda_1 = \cos( 2\pi \theta)\ dq_1 + \sin(2\pi \theta)\ d{q_2},
$$
with contact structure $\xi_1 := \ker\lambda_1$.  A closed contact manifold
$(M,\xi)$ is said to have \emph{positive Giroux torsion} if it admits a contact
embedding of $(T,\xi_1)$.  Recently, D.~Gay \cite{Gay:GirouxTorsion} used
gauge theory to show that contact manifolds with positive Giroux torsion
are not strongly fillable, and another proof using the Ozsv\'{a}th-Szab\'{o}
contact invariant has been carried out by Ghiggini, Honda and
Van Horn-Morris \cite{GhigginiHondaVanhorn}.  We shall now reprove this
result by constructing an appropriate finite energy foliation in $T$;
a pictorial representation of the proof is shown in 
Figure~\ref{fig:nonfillable}.

\begin{figure}
\includegraphics{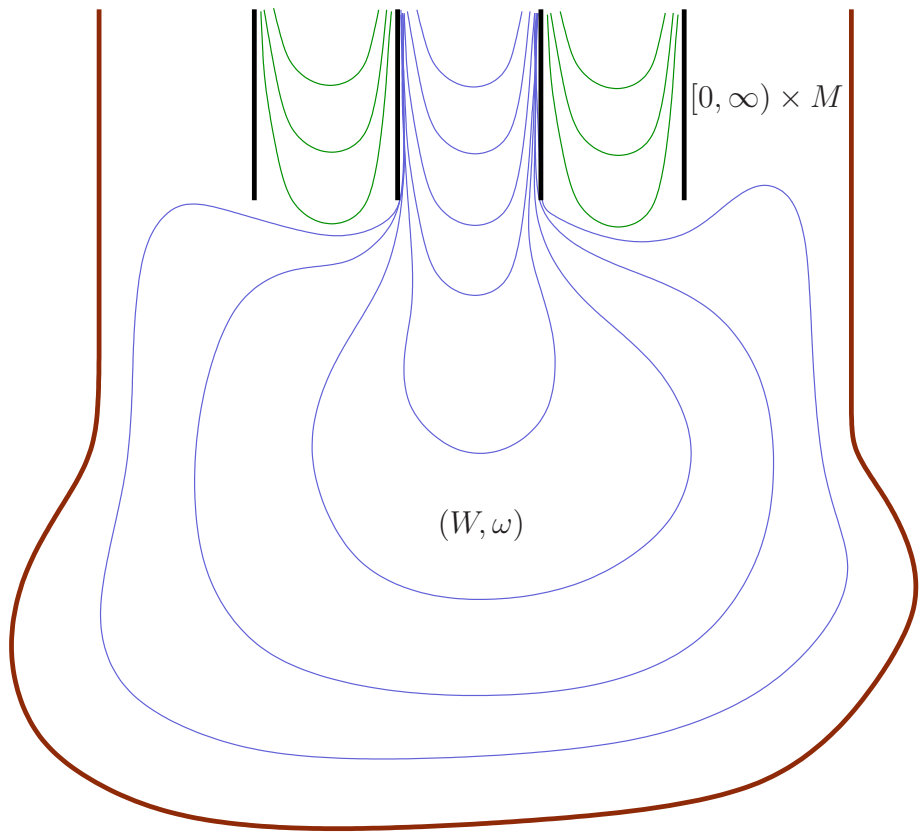}
\caption{\label{fig:nonfillable} 
The reason why Giroux torsion contradicts strong fillability: one can
construct a finite energy foliation consisting of three families of holomorphic
cylinders with positive ends.  The middle family contains interior stable
leaves, which then spread to a foliation of any filling and must
eventually run into the other families, giving a contradiction.}
\end{figure}

First note that one can always slightly expand the embedding of $T$ and thus
replace it with $T' := T^2 \times [-\epsilon,1 + \epsilon]$ for
some small $\epsilon > 0$, with the same contact form $\lambda_1$ as above.
Now multiplying the contact form by a smooth positive function of $\theta$,
we can replace $\lambda_1$ by $\lambda = f(\theta)\ dq_1 + g(\theta)\ d{q_2}$
such that $g'(-\epsilon) = g'(1 + \epsilon) = 0$.  Note that also
$g'(1/4) = g'(3/4) = 0$.  The result is that these four special values of
$\theta$ all define Morse-Bott tori foliated by closed Reeb orbits in the
$\pm\p_{q_2}$ direction (with signs alternating).  Indeed, it is easy to
compute that the Reeb vector field takes the form
$$
X_\lambda(q_1,{q_2},\theta) = \frac{g'(\theta)}{D(\theta)} \p_{q_1} -
\frac{f'(\theta)}{D(\theta)} \p_{q_2}.
$$
Now choose $J$ to be a complex structure on $\xi_1$ such that
$$
J (C \p_\theta) = - \frac{g(\theta)}{D(\theta)} \p_{q_1} 
+ \frac{f(\theta)}{D(\theta)} \p_{q_2}
$$
for some constant $C > 0$.
As shown in \cite{Wendl:OTfol}*{\S 4.2}, it is easy to construct a foliation
by holomorphic cylinders in this setting: we simply suppose there exist
cylinders $u : \RR \times S^1 \to \RR \times T'$ of the form
$$
u(s,t) = (a(s),c,t,\theta(s)),
$$
where $c \in S^1$ is a constant,
and find that the nonlinear Cauchy-Riemann equations reduce to a pair of
ODEs for $a(s)$ and $\theta(s)$; these have unique global solutions for any
choice of $a_0 := a(0)$ and $\theta_0 := \theta(0)$.  In particular, the
solution $\theta(s)$ is monotone and maps $\RR$ bijectively onto the largest
interval $(\theta_-,\theta_+) \subset (-\epsilon,1+\epsilon)$ containing
$\theta_0$ on which $g'$ is nonvanishing.  Likewise, $a(s) \to +\infty$ as
$s \to \pm\infty$.  As a result, in each of the subsets
$\{ \theta \in (-\epsilon,1/4) \}$, $\{ \theta \in (1/4,3/4) \}$ and
$\{ \theta \in (3/4,1+\epsilon) \}$, we obtain a smooth 
$(\RR\times S^1)$--parametrized family of $J$--holomorphic curves that
foliate the corresponding region; adding in the trivial cylinders for all
four of the aforementioned Morse-Bott tori yields a positive finite energy
foliation of $T'$.  It is straightforward to verify that all curves in
the foliation are stable in the sense defined here.  Since the leaves in
$\{ \theta \in (1/4,3/4) \}$ have their asymptotic orbits in the
interior of $T'$, and all other leaves have asymptotic orbits on
different Morse-Bott submanifolds, Theorem~\ref{thm:obstruction} applies,
giving a completely non-gauge-theoretic proof that no contact manifold
containing $(T',\xi_1)$ can be strongly fillable.
\end{example}

\begin{remark}
\label{remark:weakly}
Giroux torsion is not generally an obstruction to \emph{weak} fillability,
e.g.~this was demonstrated with examples on $T^3$ by Giroux
\cite{Giroux:plusOuMoins} and Eliashberg \cite{Eliashberg:fillableTorus}.
Note also that overtwisted contact manifolds are not
weakly fillable, but our method \red{does not prove this}, as 
Theorem~\ref{thm:compactness} below requires the attachment of a positive
cylindrical end to the boundary of the filling.  This is an \red{important}
difference between our technique and the ``disk filling'' methods used by
Eliashberg in \cite{Eliashberg:diskFilling}.
\end{remark}

\begin{remark}
\label{remark:foliationT3}
The setup used in Example~\ref{ex:torsion} above for 
Giroux torsion is also suitable for $(T^3,\xi_0)$, thus the same
trick yields a positive stable finite energy foliation whose leaves project
to the fibers of the fibration \eqref{eqn:piT3}.  We will make 
use of this foliation in the proof of Theorem~\ref{thm:T3}.
\end{remark}

\begin{example}
\label{ex:connected}
We've generally assumed the contact manifold $(M,\xi)$ to be connected,
but one can also drop this assumption.  Theorem~\ref{thm:obstruction}
then applies, for instance, to any disjoint union of contact manifolds
containing a planar component.  One recovers in this way a result of 
Etnyre \cite{Etnyre:planar}, that any strong symplectic filling with a
planar boundary component must have connected boundary.  This applies
more generally if any boundary component admits a positive stable
finite energy foliation, e.g.~the standard~$T^3$.  \red{A further 
generalization to \emph{partially planar} contact manifolds
is explained in \cite{AlbersBramhamWendl}, using similar ideas.}
\end{example}

\section{Holomorphic curves and compactness}
\label{sec:compactness}

The theorems of the previous section are consequences of the compactness 
properties of pseudoholomorphic curves belonging to a foliation in a symplectic
$4$--manifold with a positive cylindrical end.
The setup for most of this section will be as 
follows: assume $(M,\xi)$ has a Morse-Bott
contact form $\lambda$
and almost complex structure $J_+ \in \jJ_\lambda(M)$, 
a compact $3$--dimensional
submanifold $M_0 \subset M$ with Morse-Bott boundary
and a positive finite energy foliation $\fF_+$ of $(M_0,\lambda,J_+)$ 
containing an interior stable leaf that is asymptotically 
simple.  Assume further that $(W^\infty,\omega)$
is a noncompact symplectic manifold admitting a decomposition
$$
W^\infty = W \cup_{\p W} \left([R,\infty) \times M \right)
$$
for some $R \in \RR$, where $W$ is a compact manifold with boundary 
$\p W = M$ and
$\omega|_{[R,\infty) \times M } = d(e^a\lambda)$, with $a$ denoting
the $\RR$--coordinate on $\RR \times M$.  
There is a natural compactification $\overline{W}^\infty$ of $W^\infty$,
defined by choosing any smooth structure on $[R,\infty]$ and replacing 
$[R,\infty) \times M$ in the above decomposition by $[R,\infty] \times M$;
then $\overline{W}^\infty$ is a compact smooth manifold with boundary
$\p\overline{W}^\infty = M$.

The open manifold $(W^\infty,\omega)$ is a natural setting for punctured 
pseudoholomorphic curves.  Indeed, choose any number
$$
a_0 \in [R,\infty)
$$
and an almost complex structure $J$ on $W^\infty$ that is compatible with
$\omega$ and satisfies $J|_{[a_0,\infty) \times M} = J_+$.
Just as in the symplectization $\RR\times M$, one then considers
punctured $J$--holomorphic curves of finite energy in $W^\infty$, 
such that each 
puncture is a positive end approaching a Reeb orbit at $\{+\infty\} \times M$.

Let $\fF_0$ denote the collection of leaves in $\fF_+$ that lie entirely
within $[a_0,\infty) \times M$: observe that this includes some 
$\RR$--translation of every leaf that isn't an orbit cylinder.  Then
each of these leaves embeds naturally into $W^\infty$ as a finite energy
$J$--holomorphic curve.  After a generic perturbation of $J$ compatible
with $\omega$ in the region
$W \cup \left((R,a_0)\times M\right)$, 
standard transversality arguments as in 
\cite{McDuffSalamon:Jhol} imply that every somewhere injective 
$J$--holomorphic curve $v : \dot{\Sigma} \to W^\infty$ not fully contained in
$[a_0,\infty) \times M$ satisfies $\ind(v) \ge 0$.  
We will assume $J$ satisfies
this genericity condition unless otherwise noted.

\begin{remark}
Note that we are \emph{not} assuming $J_+ \in \jJ_\lambda(M)$ is generic,
which is important because we wish to apply the results below for
foliations $(M_0,\lambda,J_+)$ as constructed in Example~\ref{ex:torsion},
where $J_+$ is chosen to be as symmetric as possible.  We can get away with
this because of the distinctly $4$--dimensional phenomenon of 
``automatic'' transversality: in particular,
Prop.~\ref{prop:automatic} guarantees transversality for stable
leaves without any genericity assumption.  We need genericity in the
compactness argument of Theorem~\ref{thm:compactness} only to ensure that
nodal curves with components of negative index do not appear.
\end{remark}

Denote by $\mM$ the moduli space of finite energy 
$J$--holomorphic curves in~$W^\infty$, and
let $\overline{\mM}$ denote its natural compactification as in
\cite{SFTcompactness}: the latter consists of \emph{nodal $J$--holomorphic 
buildings},
possibly with multiple levels, including \red{a \emph{main} level}
in $W^\infty$ and
several \emph{upper} levels, \red{which are equivalence classes of nodal curves
in $\RR\times M$ up to $\RR$--translation}.  There are no \emph{lower} 
levels since $W^\infty$ has no negative end.

Choose any interior stable leaf 
$u_0 \in \fF_0$ that is asymptotically simple,
let $\mM_0 \subset \mM$ be the connected component containing $u_0$
and $\overline{\mM}_0 \subset \overline{\mM}$ the closure of $\mM_0$.

We will now prove two compactness results: one that gives the existence of a
global foliation with isolated singularities on $W^\infty$, and another 
that preserves this foliation under generic homotopies of the data.

\begin{thm}
\label{thm:compactness}
If $M$ contains a submanifold $M_0$ with finite energy foliation $\fF_+$
as described above, then $M_0 = M$.  Moreover,
the moduli spaces $\mM_0$ and $\overline{\mM}_0$ have the following properties:
\begin{enumerate}
\item
Every curve in $\mM_0$ is embedded and unobstructed (i.e.~the linearized
Cauchy-Riemann operator is surjective), 
and no two curves in $\mM_0$ intersect.
\item
$\overline{\mM}_0 \setminus \mM_0$ consists of the following:
\begin{enumerate}
\item
A compact $1$--dimensional manifold of buildings that each have an empty
\red{main} level and one nontrivial upper level that is a leaf of $\fF_+$
\red{(see Remark~\ref{remark:Rtranslation} below)},
\item
A finite set of $1$--level
nodal curves in $W^\infty$, each consisting of two embedded index~$0$
components with self-intersection number~$-1$ (see Remark~\ref{remark:selfint}
below), which
intersect each other exactly once, transversely.  These are all disjoint
from each other and from the smooth embedded curves in $\mM_0$.
\end{enumerate}
\item
The collection of curves in $\mM_0$ plus the embedded curves in $W^\infty$ that 
form components of nodal curves in $\overline{\mM}_0$ forms a foliation of $W^\infty$ 
outside of a finite set of ``double points'' where two leaves intersect 
transversely; these are the nodes of the isolated
nodal curves in $\overline{\mM}_0 \setminus \mM_0$.
\item
$\overline{\mM}_0$ is a smooth manifold diffeomorphic to either
$[0,1] \times S^1$ or $\DD$; it is the latter if and only if every
asymptotic orbit of the interior stable leaf $u_0$ is nondegenerate.
\end{enumerate}
\end{thm}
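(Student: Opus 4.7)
The plan is to bootstrap from the single leaf $u_0$ to the full two-parameter family using automatic transversality and Siefring's intersection theory, determine the boundary behavior of $\overline{\mM}_0$ via SFT compactness, and then extract both the global foliation and the statement $M_0 = M$ from these structural results. First I would apply Prop.~\ref{prop:automatic} to $u_0$: since $u_0$ has genus zero, only odd punctures, and $\ind = 2$, it is automatically Fredholm regular, so a neighborhood in $\mM_0$ is a smooth two-dimensional manifold. The ``stable'' condition is open under deformations (index, parity of punctures, and genus are all locally constant), so automatic transversality applies throughout the connected component $\mM_0$. Because $u_0$ sits inside $\fF_+$ and nearby leaves of $\fF_+$ are pairwise disjoint, Siefring's intersection pairing for asymptotically cylindrical curves gives the adjusted self-intersection $u_0 * u_0 = 0$; homotopy invariance then yields $u * u' = 0$ for any two $u, u' \in \mM_0$, and positivity of intersections in dimension four upgrades this to honest embeddedness and pairwise disjointness for every curve in $\mM_0$.

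Next I would run SFT compactness on a sequence in $\mM_0$ to analyze the boundary $\overline{\mM}_0 \setminus \mM_0$. Since $W^\infty$ has no negative end, no lower levels can appear, and positivity of $\fF_+$ propagates to the limit so that every end of every component is positive. Asymptotic simplicity of $u_0$, combined with intersection numbers against the embedded leaves of $\fF_+$ (which can be computed homotopically and then used as obstructions via positivity of intersections), forces any somewhere injective upper-level component to coincide with a leaf of $\fF_+$, and rules out multiply covered upper-level components as well as nested buildings with more than one upper level. For main-level components in $W^\infty$, the genericity of $J$ on $W \cup ((R, a_0) \times M)$ gives $\ind \ge 0$ for each somewhere injective component; combined with the index identity $\sum \ind = 2$ and adjunction, this restricts degenerations to exactly the two types claimed: either an empty main level with one upper level equal to a single leaf of $\fF_+$, or a single-level nodal pair in the main level consisting of two embedded index-$0$ curves with self-intersection $-1$ meeting once transversely. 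The hard part of the whole argument will be this classification step---excluding exotic multi-level upper buildings, ruling out bubbling of multiply covered components along Morse-Bott tori, and pinning down the precise nodal structure simultaneously requires tracking intersection numbers against $\fF_+$ and exploiting the four-dimensional phenomenon of automatic transversality for punctured stable curves.

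Finally, for the global picture, the evaluation map sending each element of $\overline{\mM}_0$ to its image in $\overline{W}^\infty$ is, by the disjointness/embeddedness result together with the nodal structure just identified, a foliation of $W^\infty$ with finitely many transverse double points at the nodes of the type-(b) curves. The set of simple orbits in $M$ arising as asymptotic limits of some curve in $\overline{\mM}_0$ is open (by the implicit function theorem applied to asymptotic evaluation) and closed (by SFT compactness); since it is nonempty, contained in $M_0$, and $M$ is connected, it must equal all of $M$, so $M_0 = M$. The topology of $\overline{\mM}_0$ is then read off from the boundary: type-(b) nodal limits are interior points of $\overline{\mM}_0$, because the moduli space passes smoothly through a nodal curve as the two embedded components merge and separate, so $\p \overline{\mM}_0$ is precisely the compact $1$-manifold of type-(a) buildings, parameterized via the asymptotic evaluation maps by $\fF_+ / \RR$ into the Morse-Bott families of $M$. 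When every asymptotic orbit of $u_0$ is nondegenerate these families are isolated and we obtain a single boundary circle, giving $\overline{\mM}_0 \cong \DD$; when a nontrivial Morse-Bott circle of orbits appears asymptotically, it contributes two boundary circles and $\overline{\mM}_0 \cong [0,1] \times S^1$.
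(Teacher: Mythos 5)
Your overall strategy matches the paper's, and the first two paragraphs are essentially the intended argument (automatic transversality via $\ind > c_N = 0$, vanishing Siefring intersection numbers against leaves of the foliation, and the index count ruling out all degenerations except a single upper-level leaf or a two-component nodal curve with somewhere injective index-$0$ pieces). But the final paragraph contains two genuine gaps. First, your open--closed argument for $M_0 = M$ is applied to the wrong set: the union of the asymptotic orbits of curves in $\overline{\mM}_0$ is a union of Morse--Bott submanifolds of positive codimension in $M$, so it can never be open in $M$, and the implicit function theorem for asymptotic evaluation only moves you within those Morse--Bott families, not transversely to them. The correct open--closed argument is applied to the subset of $W^\infty$ covered by images of curves in $\overline{\mM}_0$: this is open by the implicit function theorem and gluing (here $c_N(u)=0$ is what forces nearby curves to sweep out a neighborhood) and closed by compactness of $\overline{\mM}_0$, hence all of $W^\infty$; then $M_0 = M$ follows by taking the curves through points $(n,p)$ with $n \to \infty$ and extracting a limit leaf of $\fF_+$ through any prescribed $p \in M$.

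Second, counting boundary components of the compact orientable surface $\overline{\mM}_0$ does not determine its diffeomorphism type: a surface with one boundary circle could have positive genus, and one with two boundary circles need not be an annulus. You need an additional topological input, which the paper extracts from the projection $\Pi : \overline{W}^\infty \setminus \pP_{\fF_+} \to \overline{\mM}_0$: a loop near $\{+\infty\}\times M$ encircling a binding orbit parametrizes a boundary component of $\overline{\mM}_0$, and pushing that loop down into the interior of $W^\infty$ shows the boundary component is null-homotopic in $\overline{\mM}_0$ (nondegenerate case), respectively that the two boundary components are homotopic to each other (Morse--Bott case); only the disk, respectively the annulus, is compatible with this. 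A smaller but real issue in your first paragraph: disjointness of nearby leaves of $\fF_+$ does not by itself give $i(u_0;u_0)=0$, because Siefring's pairing contains an asymptotic contribution that can be strictly positive even for disjoint curves, i.e.\ intersections can ``emerge from infinity'' under perturbation. Its vanishing here is precisely the content of Lemma~\ref{lemma:i0}, which uses $c_N(u_0)=0$ and the $\RR$--invariance of $J$ near the ends to show the asymptotic winding numbers are extremal.
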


\red{
\begin{remark}
\label{remark:Rtranslation}
Note that the curves in the upper levels of a building are technically only
\emph{equivalence classes} of curves up to $\RR$--translation, nonetheless
it makes sense to speak of such a curve being a leaf of $\fF_+$, since the
latter is also an $\RR$--invariant foliation.
\end{remark}
}

\begin{remark}
\label{remark:selfint}
The self-intersection number here is meant to be interpreted in the sense
of Siefring's intersection theory for punctured holomorphic curves
\cites{Siefring:intersection,SiefringWendl}.  This is reviewed briefly in the
appendix, though it's most important to consider the case where the
curve under consideration is closed: then the definition of
``self-intersection number'' reduces to the usual one.
\end{remark}

\begin{proof}
As preparation, note that the stability condition for $u_0$ implies due to
\eqref{eqn:cNindex} that its normal Chern number $c_N(u_0)$ vanishes, 
hence $2 = \ind(u) > c_N(u) = 0$
for all $u \in \mM_0$.  The transversality criterion of 
Prop.~\ref{prop:automatic} thus guarantees that every $u \in \mM_0$ is
unobstructed once we prove that it is also embedded; we will do this
in Step~7.  The proof now proceeds in several steps.

\textbf{Step~1}: We claim that no curve $u \in \mM_0$
can have an isolated intersection with any leaf
$u_+ \in \fF_0$.  Clearly, for any given $u_+ \in \fF_0$,
positivity of intersections implies that
the subset of curves $u \in \mM_0$ that have no isolated intersection with
$u_+$ is closed, and we must show that it's also open.  There's a slightly
subtle point here, as the noncompactness of the domain allows a theoretical
possibility for intersections to ``emerge from infinity'' under perturbations
of~$u$.  To rule this out, we use the intersection theory of punctured
holomorphic curves defined in \cites{Siefring:intersection,SiefringWendl} 
(a basic outline is given in the appendix).  The point is that there exists 
a homotopy invariant intersection number $i(u ; u_+) \in \ZZ$ 
that includes a count of ``asymptotic intersections'', and
the condition $i(u ; u_+) = 0$ is sufficient to guarantee that no curve
homotopic to $u$ ever has an isolated intersection with $u_+$.  This
number vanishes in the present case due to Lemma~\ref{lemma:i0}.

\textbf{Step~2}: As an obvious consequence of Step~1, a similar statement is 
true for any component $v$ of a building $u \in \overline{\mM}_0$: 
$v$ has no isolated intersection with any leaf
$u_+ \in \fF_+$ if $v$ is in an upper level, or with any $u_+ \in \fF_0$
if $v$ is in the \red{main} level.

\textbf{Step~3}: If $u \in \overline{\mM}_0 \setminus \mM_0$, 
we claim that one of the following is true:
\begin{enumerate}
\item
$u$ has only one nontrivial upper level, consisting of a leaf of $\fF_+$ in
$\RR\times M$, and the \red{main} level is empty.
\item
$u$ has \red{no upper levels}.
\end{enumerate}
Indeed, suppose $u$ has nontrivial upper levels and let $v$ denote a
nontrivial component of the topmost nontrivial level.  Due to our
assumptions on $u_0$, each positive end of $v$ is then a simply covered
orbit belonging to a distinct Morse-Bott submanifold in the interior of $M_0$, 
hence $v$ is somewhere injective.  The asymptotic formula of 
\cite{HWZ:props4} now implies that $\pi \circ v$ is an embedding into $M$ 
near each
end and is disjoint from the corresponding asymptotic orbit; hence it
intersects some projected leaf of $\fF_+$; we conclude that $v$ intersects 
some leaf $u_+ \in \fF_+$.  By the result of Step~2, this intersection cannot
be isolated, and since $v$ is somewhere injective, we conclude $v \in \fF_+$.
As a result, $v$ has no negative ends and its positive ends are in 
one-to-one correspondence with those of $u_0$, so $u$ can have no other
nonempty components.

\textbf{Step~4}: Suppose $u \in \overline{\mM}_0 \setminus \mM_0$ 
satisfies the second alternative in Step~3: $u$ is
then a nodal curve in the \red{main} level.  We claim that any nonconstant
component $v$
of $u$ either is a leaf in $\fF_0$ or it is not contained in the subset
$[a_0,\infty) \times M \subset W^\infty$.  There are two cases to consider: if
$v$ has no ends then it cannot be in $[a_0,\infty) \times M$ because the
symplectic form here is exact, so no nonconstant closed holomorphic curve
can exist.  If on the other hand $v$ has positive ends and is contained in
$[a_0,\infty) \times M$, where $J$ is $\RR$--invariant, then a similar
argument as in Step~3 finds an illegal isolated intersection of $v$ with
a leaf of $\fF_0$ unless $v$ is such a leaf.

\textbf{Step~5}: Continuing with the assumptions of Step~4,
we claim that one of the following holds:
\begin{enumerate}
\item
$u$ is smooth (i.e.~has no nodes).
\item
$u$ has exactly two components, both somewhere injective and with index~$0$.
\end{enumerate}
To see this, recall first that $u_0$ has genus~$0$, thus $u$ has
arithmetic genus~$0$.  
Now suppose $u$ has multiple components connected by $N \ge 1$ nodes.  Every 
component of $u$ is then either a punctured sphere with 
positive ends (denoted
here by $v_i$), a nonconstant closed sphere (denoted $w_i$) or a
\emph{ghost bubble}, i.e.~a constant sphere (denoted $g_i$).  For a sphere 
$v_i$ with ends, the asymptotic behavior of $u_0$
guarantees that $v_i$ is somewhere injective.  Then by Step~4, it is either
a leaf of $\fF_0$ or it is not contained in
$[a_0,\infty) \times M$, hence the genericity assumption for $J$ implies
$\ind(v_i) \ge 0$.  Consider now a nonconstant closed component $w_i$, which we assume
to be a $k_i$--fold cover of a somewhere injective sphere $\hat{w}_i$ for
some $k_i \in \NN$.  Again, Step~4 and the genericity of $J$ guarantee that
$\ind(\hat{w}_i) = 2 c_1([\hat{w}_i]) - 2 \ge 0$, hence
$$
\ind(w_i) = 2 c_1([w_i]) - 2 = 2 k_i c_1([\hat{w}_i]) - 2 =
k_i \cdot \ind(\hat{w}_i) + 2(k_i - 1) \ge 2(k_i - 1).
$$
Ghost bubbles are now easy to rule out: we have $\ind(g_i) = 2 c_1([g_i]) - 2
= -2$, and by the stability condition of Kontsevich (cf.~\cite{SFTcompactness}),
$g_i$ has at least three nodes, each contributing~$2$ to the total index
of $u$.  Since we already know that the nonconstant components contribute
nonnegatively to the index, the existence of a ghost bubble thus implies
the contradiction $\ind(u) \ge 4$.  With this detail out of the way, we 
add up the
indices of all components, counting an additional~$2$ for each node, and find
\begin{equation*}
\begin{split}
2 = \ind(u) &= \sum_i \ind(v_i) + \sum_i \ind(w_i) + 2N \\
&\ge 2\sum_i (k_i - 1) + 2N.
\end{split}
\end{equation*}
Since $N \ge 1$ by assumption, this implies that each $k_i$ is~$1$ and
$N=1$, hence $u$ has exactly two components, both somewhere injective with
index~$0$.

\textbf{Step~6}: By Step~5, the nodal curves in $\mM_0$
have components that are unobstructed and have index~$0$, 
hence they are isolated.  By the compactness of $\overline{\mM}_0$,
this implies that the set of nodal
curves in $\overline{\mM}_0 \setminus \mM_0$ is finite.  A standard
gluing argument as in \cite{McDuffSalamon:Jhol} now identifies a neighborhood
of any nodal curve $u$ in $\overline{\mM}_0$ with an open subset of $\RR^2$,
where every curve other than $u$ is smooth.  Similarly, since every
$u \in \mM_0$ is unobstructed, the usual implicit function theorem in
Banach spaces defines smooth manifold charts everywhere on $\mM_0$.  Outside
a compact subset, $\overline{\mM}_0 \setminus \p\overline{\mM}_0$ can be
identified with the set of leaves in $\fF_0$, and is thus diffeomorphic
to $[0,\infty) \times V$ for some compact $1$--manifold $V$, so
$\p\overline{\mM}_0$ is diffeomorphic to $V$ itself.
The space $\overline{\mM}_0$ is therefore a compact surface with boundary,
and is orientable due to arguments in \cite{BourgeoisMohnke}.

\textbf{Step~7}: We now use the intersection theory from
\cites{Siefring:intersection,SiefringWendl} to show that
$\overline{\mM}_0$ foliates~$W^\infty$.  We noted already in Step~1 that
$i(u ; u') = 0$ for any two curves $u, u' \in \overline{\mM}_0$, which
implies that no two of these curves can ever intersect.  Since every
$u \in \mM_0$ is obviously somewhere injective due to its asymptotic
behavior, the adjunction formula \eqref{eqn:adjunction} implies
$\sing(u) = 0$ and thus these curves are also embedded.  Consider now
a nodal curve $u \in \overline{\mM}_0$, with its two components $u_1$ and~$u_2$,
and observe that \eqref{eqn:cNindex} implies $c_N(u_1) = c_N(u_2) = -1$.
Applying the adjunction formula again, we find
\begin{equation*}
\begin{split}
0 = i(u ; u) &= i(u_1 ; u_1) + i(u_2 ; u_2) + 2 i(u_1 ; u_2) \\
&\ge 2\sing(u_1) + c_N(u_1) + 2\sing(u_2) + c_N(u_2) + 2 i(u_1 ; u_2) \\
&= 2\sing(u_1) + 2\sing(u_2) + 2 \left[ i(u_1 ; u_2) - 1 \right].
\end{split}
\end{equation*}
Thus $\sing(u_1) = \sing(u_2) = 0$, implying both components are embedded,
and $i(u_1 ; u_2) = 1$, so the node is the only intersection, and is
transverse.  The adjunction formula for each of $u_1$ and $u_2$ individually
now also implies $i(u_1 ; u_1) = i(u_2 ; u_2) = -1$.  (Note that the
$\cov_\infty(z)$ terms must all vanish, as this is manifestly true
for $u_0$ and they depend only on the orbits).
By the gluing argument mentioned in Step~6,
a neighborhood of $u$ in $\overline{\mM}_0$ is a smooth
$2$--parameter family of embedded curves from $\mM_0$; these foliate a
neighborhood of the union of $u_1$ and $u_2$.  Similarly, the implicit
function theorem in \cite{Wendl:BP1} or \cite{Wendl:thesis} implies that
for any $u \in \mM_0$, the nearby curves in $\mM_0$ foliate a neighborhood
of~$u$.  This shows that
$$
\{ p \in W^\infty\ |\ \text{$p$ is in the image of some $u \in
\overline{\mM}_0$} \}
$$
is an open subset of~$W^\infty$.  It is also clearly a closed subset since
$\overline{\mM}_0$ is compact.  We conclude that all of $W^\infty$ is filled
by the curves in $\overline{\mM}_0$.

\textbf{Step~8}: It follows easily now that $M_0 = M$, as one can take a
sequence of curves in $\mM_0$ whose images approach $(+\infty,p)$ for
any $p \in M$; since a subsequence converges to a leaf of $\fF_+$,
we conclude that $\fF_+$ fills all of~$M$.

\textbf{Step~9}: Having shown already that $\overline{\mM}_0$ is a compact
orientable surface with boundary, we prove finally that it must be either
$\DD$ or $[0,1] \times S^1$.  Define a smooth map
\begin{equation}
\label{eqn:Pi}
\Pi : W^\infty \to \overline{\mM}_0
\end{equation}
by sending $p \in W^\infty$ to the unique curve in $\overline{\mM}_0$
whose image contains~$p$.  We can extend $\Pi$ over $\overline{W}^\infty
\setminus \pP_{\fF_+}$ by sending
$p \in M \setminus \pP_{\fF_+}$ to the unique leaf in $\fF_+ / \RR =
\p\overline{\mM}_0$ containing~$p$.

Assume first that there are degenerate
orbits among the asymptotic orbits of the interior stable leaf
$u_0 \in \fF_+$: such an orbit belongs to a
Morse-Bott $2$--torus $T_0 \subset M$ foliated by Reeb orbits that are
asymptotic limits of leaves in $\fF_+$.  By the definition of $\mM_0$,
every curve $u \in \mM_0$ and thus every leaf in $\fF_+$ has a unique end 
asymptotic to some orbit in~$T_0$.  In this case $\p\overline{\mM}_0$ must
have two connected components, and we can parametrize them as follows.
Identify a neighborhood of $T_0$ in $M$ with $(-1,1) \times S^1 \times S^1$
such that $\{0\} \times S^1 \times S^1 = T_0$ and the Reeb orbits are all
of the form $\{0\} \times \{\text{const}\} \times S^1$.  Then we can arrange
that for sufficiently small $\epsilon > 0$, 
the loop $\gamma_+(t) = (+\infty,\epsilon,t,0) \in \overline{W}^\infty$ passes 
through a different leaf of
$\fF_+$ for each~$t$, thus without loss of generality,
$\Pi \circ \gamma_+ : S^1 \to \p\overline{\mM}_0$ is an oriented parametrization
of one boundary component of $\p\overline{\mM}_0$.  The other boundary 
component can be given an oriented parametrization in the form
$\Pi \circ \gamma_- : S^1 \to \p\overline{\mM}_0$ where
$\gamma_-(t) = (+\infty,-\epsilon,-t,0)$.  
Now moving both loops down slightly from~$\infty$, we see that
$[\gamma_-] = -[\gamma_+] \in \pi_1(\overline{W}^\infty \setminus \pP_{\fF_+})$, 
implying that
the two boundary components of $\overline{\mM}_0$ are homotopic,
and therefore $\overline{\mM}_0 \cong [0,1] \times S^1$.

If all orbits of $u_0$ are nondegenerate, then $\p\overline{\mM}_0$ must have
only one component, which we can similarly parametrize by choosing a loop
$\gamma : S^1 \to \{+\infty\} \times M$ that circles once around one of these
orbits and passes once transversely through each leaf of~$\fF_+$.
Moving $\gamma$ again down from $+\infty$, it is contractible in
$\overline{W}^\infty \setminus \pP_{\fF_+}$, implying $\p\overline{\mM}_0$ is
contractible, thus $\overline{\mM}_0 \cong \DD$.
\end{proof}

To set up the second compactness result, assume that for $\tau \in [0,1]$,
$\omega_\tau$ is a smooth family of symplectic forms on $W^\infty$
matching $d(e^a\lambda)$ on $[a_0,\infty) \times M$, and $J_\tau$ is a smooth
family of almost complex structures compatible with $\omega_\tau$ for each
$\tau$ and matching $J_+ \in \jJ_\lambda(M)$ on $[a_0,\infty) \times M$.  
Assume also
that the homotopy $J_\tau$ is generic on $W^\infty \setminus \left(
[a_0,\infty) \times M \right)$ so that for any $\tau \in [0,1]$,
every somewhere injective $J_\tau$--holomorphic curve $u$ not contained in
$[a_0,\infty)\times M$ satisfies $\ind(u) \ge -1$.  Then for each $\tau$,
let $\mM_\tau$ denote the connected moduli space of $J_\tau$--holomorphic
curves containing an interior stable leaf in $\fF_0$ that is asymptotically
simple, and write its compactification as $\overline{\mM}_\tau$.

\begin{thm}
\label{thm:compactnessHomotopy}
The conclusions of Theorem~\ref{thm:compactness} hold for the moduli 
spaces $\overline{\mM}_\tau$ for each $\tau \in [0,1]$; 
in particular they are all smooth compact manifolds
with boundary that form foliations of $W^\infty$ with finitely many
singularities, and their boundaries can be identified naturally with the
set of leaves in the projected foliation $\fF_+ / \RR$.  Moreover,
there exists a smooth $1$--parameter family of
diffeomorphisms $\overline{\mM}_0 \to \overline{\mM}_\tau$
that maps $\mM_0$ to $\mM_\tau$ and restricts to the natural identification
$\p\overline{\mM}_0 \to \p\overline{\mM}_\tau$.
\end{thm}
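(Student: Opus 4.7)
The plan is to extend the proof of Theorem~\ref{thm:compactness} to the parametric setting fiberwise, then assemble the moduli spaces into a single smooth manifold and produce the diffeomorphisms via an Ehresmann-type trivialization.

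First I would verify that the conclusions of Theorem~\ref{thm:compactness} hold for each $\overline{\mM}_\tau$ individually. Steps~1--4, 7, and 9 of that proof depend only on topological data (the vanishing intersection number $i(u;u_+)=0$ from Lemma~\ref{lemma:i0}, the adjunction formula, positivity of intersections, and the relation \eqref{eqn:cNindex} between $c_N$ and $\ind$) together with the automatic transversality criterion of Prop.~\ref{prop:automatic}; each of these is preserved under the homotopy and transfers verbatim. The bubbling estimate of Step~5 is the one place requiring genuine care: the weaker bound $\ind(\hat v)\ge -1$ for somewhere injective components a priori admits additional nodal configurations. The covering identity $\ind(w_i)=k_i\ind(\hat w_i)+2(k_i-1)\ge k_i-2$ together with the observation that the stability of $u_0$ forces every component of a nodal limit to have only odd punctures at infinity (hence even index by \eqref{eqn:cNindex}) still pins the only surviving configuration down to two somewhere injective components of index~$0$ joined by a single node, reproducing the structure claimed in Theorem~\ref{thm:compactness}.

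Second, I would organize $\widetilde{\mM} := \bigsqcup_{\tau\in[0,1]}\{\tau\}\times\mM_\tau$ into a smooth $3$-manifold whose compactification $\widetilde{\overline{\mM}}$ is a smooth compact manifold with corners. The parametric linearized Cauchy--Riemann operator is surjective at every $(\tau,u)$ because automatic transversality already yields fiberwise surjectivity, and the gluing theorems of \cite{McDuffSalamon:Jhol} (suitably adapted to the punctured setting) provide smooth charts near nodal and broken configurations. The fiber of the natural projection $\pi:\widetilde{\overline{\mM}}\to[0,1]$ over $\tau$ is exactly $\overline{\mM}_\tau$, and the first step guarantees that $\pi$ is a proper smooth submersion.

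Finally, Ehresmann's theorem applied to $\pi$ furnishes the desired $1$-parameter family of diffeomorphisms $\overline{\mM}_0\to\overline{\mM}_\tau$ by integrating any smooth lift of $\p_\tau$. Because $J_\tau=J_+$ on the cylindrical end, the parametrization of the upper-level boundary stratum by $\fF_+/\RR$ is $\tau$-independent, so choosing the lift to be tangent to this boundary arranges that the resulting diffeomorphisms restrict to the natural identification $\p\overline{\mM}_0\to\p\overline{\mM}_\tau$ and carry $\mM_0$ to $\mM_\tau$. The main obstacle is the bubbling analysis in the first step, where one must confirm that the weaker parametric genericity does not introduce exotic nodal configurations in any fiber; once that is in hand, the remainder is routine parametric transversality, gluing, and Ehresmann trivialization.
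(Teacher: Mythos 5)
Your proposal is correct and follows essentially the same route as the paper: the parity argument (the odd punctures of $u_0$ force even, hence nonnegative, index on the somewhere injective components despite the weaker parametric bound $\ind \ge -1$, with closed components handled by evenness of their index) is exactly the paper's fix for Step~5, and your Ehresmann trivialization is a packaged version of the paper's argument that the unobstructed index-$0$ and index-$2$ curves deform uniquely in~$\tau$. Two points are stated too quickly: properness of $\pi : \widetilde{\overline{\mM}} \to [0,1]$ does not follow from fiberwise compactness alone but needs the uniform energy bound coming from the fact that the energy of $u \in \mM_\tau$ depends only on the relative homology class of a leaf and continuously on $\omega_\tau$; and the lift of $\p_\tau$ must be chosen tangent to the arcs of nodal curves (not arbitrary) in order to carry $\mM_0$ to $\mM_\tau$.
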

\begin{proof}
For each $\tau \in [0,1]$, the proof of Theorem~\ref{thm:compactness}
requires only a small modification to work for the almost complex
structure~$J_\tau$.  The difference is that $J_\tau$ is now not necessarily
generic, so we have a weaker lower bound on the indices of somewhere
injective curves that are not contained in $[a_0,\infty) \times M$.  The only
place this makes a difference is in Step~5: we must now consider
the possibility that $u$ is a nodal curve in $W^\infty$ with several
components of possibly negative index.  Since none of these components
are contained in $[a_0,\infty) \times M$ and $\{ J_\tau \}_{\tau \in [0,1]}$
is a generic homotopy, they all cover somewhere injective curves of 
index at least~$-1$.  We claim that
this implies the somewhere injective curves have nonnegative index after all: 
for closed components
the index is always even, so this is clear.  The same turns out to be
true for components with ends: since $u_0$ has only odd punctures,
any punctured somewhere injective curve with a cover that forms a component
of $u$ has all its ends asymptotic to orbits that
have odd covers, and must themselves therefore be odd.  (See
\cite{Wendl:automatic}*{\S 4.2} for the proof that even orbits always
have even covers; this statement applies equally well in the Morse-Bott
setup described in the appendix.)  
It follows then from the index formula
that the index of such a component must be even, and in this case
therefore nonnegative.  The rest of the compactness proof now follows just 
as before, with the added detail that all curves arising in the limit 
(including components of nodal curves) are unobstructed due to 
Prop.~\ref{prop:automatic}, which does not require genericity.

By the above argument, we have moduli spaces $\overline{\mM}_\tau$ that
foliate $W^\infty$ with $J_\tau$--holomorphic curves outside of a finite
set of nodes.  Moreover, every curve in the foliation is unobstructed,
so for any given $\tau_0 \in [0,1]$, the index~$0$ curves that are components
of nodal curves in $\overline{\mM}_{\tau_0}$ deform uniquely to
$J_\tau$--holomorphic curves for $\tau$ in some neighborhood of $\tau_0$,
and an intersecting pair of such curves forms a nodal curve.  Since the
curves in $\overline{\mM}_{\tau_0}$ and $\overline{\mM}_\tau$ near their 
respective boundaries are identical, a familiar
intersection argument now shows that this nodal curve must belong to
$\overline{\mM}_\tau$.  Similarly, index~$2$ curves in 
$\mM_{\tau_0}$ deform to index~$2$ curves in $\mM_{\tau}$, providing a
local smooth $1$--parameter family of diffeomorphisms
$$
\overline{\mM}_{\tau_0} \to \overline{\mM}_\tau
$$
for $\tau$ close to $\tau_0$,
which maps nodal curves to nodal curves and leaves in $\fF_0$ and
$\fF_+$ to themselves.  To extend this for all $\tau \in [0,1]$, it only
remains to show that the ``parametrized'' moduli space
$$
\overline{\mM}_{[0,1]} := \{ (\tau,u)\ |\ 
\text{$\tau \in [0,1]$, $u \in \overline{\mM}_\tau$} \}
$$
is compact.  This follows from the same arguments as above, after
observing that the energies of $u \in \mM_\tau$ depend only on the
relative homology class defined by a leaf $u_0 \in \fF_0$ and
(continuously) on $\omega_\tau$, thus they are uniformly bounded.
\end{proof}

\begin{remark}
\label{remark:noGenericity}
In some important situations, one can prove the two theorems above without
any genericity assumption at all: the point is that genericity is usually 
needed to ensure a lower bound on the indices of components in nodal curves, but
is not required to show that the curves actually obtained in the limit are 
unobstructed.
Thus if there are topological conditions preventing the appearance of nodal
curves, then \emph{any} compatible $J$ or smooth family $J_\tau$ 
\red{(also for $\tau$ varying in a higher-dimensional space)} will suffice: 
this works in particular for \red{exact} fillings of $T^3$ and will play a 
crucial role in the proof of Theorem~\ref{thm:sympT3}.
\end{remark}

\section{Lefschetz fibrations and obstructions to filling}
\label{sec:proofs}

We are now in a position to construct the Lefschetz fibrations that were
promised in \S\ref{sec:results}.  It will be convenient to introduce the 
following notation.

Suppose $(W,\omega)$ is a strong filling of $(M,\xi)$ and $Y$ is a Liouville
vector field near $\p W$ such that $\iota_Y\omega|_M = e^f \lambda$ for some
contact form $\lambda$ on $M$ and smooth function $f : M \to \RR$.
Then for any constant $R > \max f$,
we can use Lemma~\ref{lemma:attachStein} to
attach the trivial symplectic cobordism 
$(\sS_f^{R},d(e^a\lambda))$, producing an enlarged filling
$$
(W^{R},\omega) := (W,\omega) \cup_{\p W} (\sS_f^{R},d(e^a\lambda)).
$$
This has $\p_a$ as a Liouville vector field near $\p W^{R}$, such that
$\iota_{\p_a}\omega|_{\p W^{R}} = e^{R}\lambda$.  
One can now attach a cylindrical end,
$$
(W^\infty,\omega) := (W^{R},\omega) \cup_{\p W^{R}}
([R,\infty) \times M, d(e^a\lambda)), 
$$
defining a noncompact symplectic cobordism which admits the compactification
$$
\overline{W}^\infty = W^{R} \cup_{\p W}
\left([R,\infty] \times M \right).
$$
We assign a smooth structure to $[R,\infty]$ so that $\overline{W}^\infty$
may be
considered a smooth manifold with boundary, though its symplectic structure
degenerates at $\p\overline{W}^\infty$.  It is sometimes useful however to
define a new symplectic structure on $W^\infty$ that does extend
to infinity.  Observe first that for any $\epsilon > 0$ with $R - \epsilon >
\max f$, $(W^\infty,\omega)$ contains the slightly extended cylindrical end
$([R - \epsilon,\infty) \times M, d(e^a\lambda))$.  Now choose
$\delta \in (0,\epsilon)$ and a diffeomorphism
$$
\varphi : [R -\epsilon,\infty] \to [e^{R-\epsilon},e^{R}]
$$
with the property that $\varphi(a) = e^a$ for $a \in [R-\epsilon,
R-\delta]$.
Then the symplectic form $\omega_{\varphi}$ on $W^\infty$ defined by
$$
\omega_\varphi =
\begin{cases}
d(\varphi\lambda) & \text{ on $[R -\epsilon,\infty) \times M$,}\\
\omega & \text{ everywhere else}
\end{cases}
$$
has a smooth extension to $\overline{W}^\infty$, such that the map
$$
[R - \epsilon,R] \times M \to [R - \epsilon,\infty] \times M :
(a,m) \mapsto (\varphi^{-1}(e^a),m)
$$
extends to a symplectomorphism $(W^{R},\omega) \to
(\overline{W}^\infty,\omega_\varphi)$.

We will consider almost complex structures $J$ on $W^\infty$ that are 
compatible with $\omega$, are generic in $W^\infty \setminus
\left( [R - \delta,\infty) \times M \right)$ and match some fixed
$J_+ \in \jJ_{\lambda}(M)$ over $[R - \delta,\infty) \times M$.
Observe that such a $J$ is also compatible with the modified symplectic form
$\omega_\varphi$ defined above, thus finite energy embedded
$J$--holomorphic curves in $W^\infty$ give rise to properly embedded
symplectic submanifolds of $(\overline{W}^\infty,\omega_\varphi) \cong
(W^{R},\omega)$.

\begin{lemma}
\label{lemma:spheres}
The almost complex structure $J$ above can be chosen so that every closed,
nonconstant $J$--holomorphic curve in $(W^\infty,J)$ is contained in 
the interior of~$W$.
\end{lemma}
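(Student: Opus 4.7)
The plan is to choose $J$ so that, in addition to matching $J_+$ on $[R-\delta,\infty)\times M$, it is also of symplectization type on a thin collar of $\p W$ pushed slightly into $W$. With such a choice, the $\RR$-coordinate $a$ composed with any $J$-holomorphic map becomes subharmonic on an open set strictly containing $\p W$, and the maximum principle will then prevent closed curves from entering that set.

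First I would apply Lemma~\ref{lemma:attachStein} with the inward Liouville flow $\varphi_Y^{-t}$ to identify a collar $\nN$ of $\p W$ inside $W$ symplectically with $\sS_{f-\epsilon_0}^f \subset (\RR\times M, d(e^a\lambda))$ for some small $\epsilon_0 > 0$, so that $Y$ corresponds to $\p_a$. Gluing $\nN$ onto $\sS_f^R \cup ([R,\infty)\times M)$ yields an open set $\oO \subset W^\infty$ symplectomorphic to $\{(a,m) \in \RR\times M : a > f(m) - \epsilon_0\}$, on which I would set $J$ equal to the pullback of $J_+ \in \jJ_\lambda(M)$ under this symplectomorphism, and elsewhere (inside $W \setminus \overline{\nN}$) I would extend $J$ to a generic $\omega$-compatible almost complex structure. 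Translation invariance of $J_+$ makes this consistent with the prescribed $J|_{[R-\delta,\infty)\times M} = J_+$.

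Given any closed nonconstant $J$-holomorphic curve $u : \Sigma \to W^\infty$, I would examine the open set $U := u^{-1}(\oO) \subset \Sigma$. On $U$ the composition $a \circ u$ is subharmonic---the standard computation for symplectization-type $J$, using the splitting $T(\RR\times M) = \RR\p_a \oplus \RR X_\lambda \oplus \xi$---so on the compact closure $\overline{U}$ the maximum of $a \circ u$ is attained. If this maximum exceeded $f - \epsilon_0$ it would be attained at some $z_0$ in the open set $U$, and the strong maximum principle would force $a \circ u$ to be locally constant near $z_0$; unique continuation for pseudoholomorphic curves would then force $u$ to be globally constant, contradicting the hypothesis. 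Hence the maximum must equal $f - \epsilon_0$, so $U = \emptyset$, and $u(\Sigma) \subset W^\infty \setminus \oO \subset \interior W$.

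The main step requiring care will be the implication that $a \circ u$ being locally constant forces $u$ to be locally constant. If $a\circ u \equiv c$ on some open $V \subset U$, then $du$ takes values in the tangent space $\RR X_\lambda \oplus \xi$ to the level $\{a = c\}$; since $J_+ X_\lambda = -\p_a$ lies outside this subspace, $J_+$-invariance of $\im du$ forces $du$ actually to take values in $\xi$ alone. Then $\CC$-linearity of $du$ implies its rank is $0$ or $2$ everywhere, and non-integrability of $\xi$ rules out rank $2$ tangent to $\xi$ via Frobenius, so $du \equiv 0$ on $V$ and $u$ is indeed locally constant there. The remaining ingredients---subharmonicity of $a$ along $J_+$-holomorphic curves, unique continuation, and compatibility of the symplectization-type choice of $J$ on $\oO$ with the remaining generic freedom on $W \setminus \overline{\nN}$---are standard.
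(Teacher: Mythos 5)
Your overall strategy---confine closed curves by a maximum principle coming from a pseudoconvex structure near the end---is the same as the paper's, but there is a genuine gap in the implementation: you treat the function $f$ as if it were a constant. The boundary $\p W$ sits inside the symplectization coordinates as the \emph{graph} $\{a = f(m)\}$, so your region $\oO \cong \{a > f(m)-\epsilon_0\}$ is not the super-level set of the coordinate $a$; its boundary is a graph over $M$. Consequently, on $\p U$ the subharmonic function $a\circ u$ takes the values $f(m)-\epsilon_0$, which are bounded above only by $\max_M f - \epsilon_0$. The strong maximum principle therefore yields $a\circ u \le \max_M f - \epsilon_0$ on $U$, not $U=\emptyset$: nothing prevents the curve from entering the slab $\{f(m)-\epsilon_0 < a < \max_M f - \epsilon_0\}$, a region which (for non-constant $f$) contains points outside of $W$. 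In other words, the level sets of $a$ do not foliate $W^\infty\setminus\interior W$, so subharmonicity of $a$ alone cannot push the curve all the way past the graph of $f$ into $\interior W$.

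The paper repairs exactly this point. It constructs a diffeomorphism $\psi:[r,\infty)\times M\to\sS_f^\infty$, $(a,m)\mapsto (h(a,m),m)$, whose ``level sets'' $\psi(\{a\}\times M)$ interpolate between the graph of $f$ (which is $\p W$ itself, the innermost leaf) and the genuine level sets of $a$ far out in the end. The pullback $\psi^*(e^a\lambda)=e^h\lambda$ is no longer a symplectization form for a single contact form, but for a family $\lambda_a = h_a\lambda$ with varying Reeb fields $X_a$; choosing $J$ with $J\p_a = X_a$ and $J\xi=\xi$ makes every hypersurface $\psi(\{a\}\times M)$ $J$--convex, and these hypersurfaces foliate all of $W^\infty\setminus\interior W$, so the maximum principle now does exclude closed curves from that entire region. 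If you want to keep your version of the argument, you must either make this interpolation (so that the innermost pseudoconvex hypersurface is $\p W$) or first normalize so that $f$ is constant. A secondary, more minor issue: you keep $J$ exactly equal to the $\RR$--invariant $J_+$ on a region where the surrounding setup requires $J$ to be generic; the paper addresses this by observing that $J$--convexity is an open condition, so a small perturbation restores genericity without destroying the maximum-principle conclusion, and your write-up should say something analogous since your argument as stated uses the exact symplectization form of $J$ rather than an open condition.
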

\begin{proof}
\red{It suffices to arrange that $W^\infty \setminus W$ is foliated by
$J$--convex hypersurfaces.  Choose $r < R - \delta$,
let $h : [r,\infty) \times M \to \RR$ denote any smooth function 
satisfying
\begin{enumerate}
\item $\p_a h > 0$,
\item $h(a,m) = a$ for $a \ge R - \delta$,
\item $h(a,m) = a - r + f(m)$ for $a$ near~$r$,
\end{enumerate}
and define a diffeomorphism
$$
\psi : [r,\infty) \times M \to \sS_f^\infty : (a,m) \mapsto (h(a,m),m).
$$
This restricts to the identify on $[R-\delta,\infty) \times M$ and
satisfies $\psi^*(e^a\lambda) = e^h\lambda$, thus it defines a
symplectomorphism $([r,\infty) \times M, d(e^h\lambda)) \to 
(\sS_f^R, d(e^a\lambda))$.  Now for $a \in [r,\infty)$, denote by
$h_a  : M \to (0,\infty)$ the smooth $1$--parameter family of
functions such that $e^{h(a,\cdot)} = e^a h_a$, and define the family of
contact forms $\lambda_a := h_a\lambda$ with corresponding Reeb vector
fields~$X_a$.  Regarding $\lambda_a$ in the natural way as a $1$--form on
$\RR\times M$, we now have
$$
d(e^h\lambda) = e^a\ da \wedge \lambda_a + e^a \ d\lambda_a,
$$
and an almost complex structure $\hat{J}$ compatible with 
$d(e^h\lambda)$ can thus be constructed as follows.
Given $J_+ \in \jJ(\lambda)$, choose $\hat{J}$ on $[r,\infty) \times M$ 
so that it matches $J_+$ on $[R-\delta,\infty) \times M$, and
at $\{a\} \times M$ satisfies
$$
J \p_a = X_a \qquad \text{ and } \qquad J(\xi) = \xi,
$$
where $J|_\xi$ is compatible with $d\lambda$ (and therefore also with
$d\lambda_a$ for each $a$).  Now the 
level sets $\{a\} \times M$ are $\hat{J}$--convex, 
thus an almost complex structure
of the desired form on $\sS_f^\infty$ is given by $J := \psi_*\hat{J}$, and we
can extend the latter
to an $\omega$--compatible almost complex structure on~$W^\infty$ for which
the hypersurfaces $\psi(\{a\} \times M)$ for $a \ge r$ are $J$--convex.
Since $J$--convexity is an open condition
with respect to~$J$, it is also safe to make a small perturbation
on $W^R$ so that $J$ becomes generic outside of $[R-\delta,\infty) \times M$.
}
\end{proof}

\begin{proof}[Proof of Theorem~\ref{thm:Lefschetz}]
Assume $(M,\xi)$ is a contact manifold supported by a planar open book
$\pi : M \setminus B \to S^1$.  Then using the construction in 
\cite{Wendl:openbook}, there is a nondegenerate contact form
$\lambda$ with $\ker\lambda = \xi$ and $J_+ \in \jJ_{\lambda}(M)$ 
such that up to isotopy, the pages of $\pi$ are projections to $M$ of 
embedded $J_+$--holomorphic
curves in $\RR\times M$, with positive ends asymptotic to the orbits in~$B$.
This defines a positive finite energy foliation $\fF_+$ of $(M,\lambda,J_+)$, 
with every leaf stable.  Now if $(W,\omega)$ is a strong filling of $(M,\xi)$,
we define the enlarged fillings $W^R$ and $W^\infty$ with generic almost
complex structure $J$ as described above, and then
Theorem~\ref{thm:compactness} yields a moduli space $\overline{\mM}_0$ of 
$J$--holomorphic curves
that foliate $W^\infty$ outside a finite set of transverse nodes,
such that $\p\overline{\mM}_0$ is the space of leaves in $\fF_+$ up to
$\RR$--translation.  Since $\lambda$ is nondegenerate, $\overline{\mM}_0 \cong
\DD$, and the map
$$
\Pi : \overline{W}^\infty \setminus B \to \overline{\mM}_0
$$
defined as in \eqref{eqn:Pi} gives a symplectic Lefschetz fibration of
$(\overline{W}^\infty\setminus B,\omega_\varphi) \cong 
(W^{R} \setminus B,\omega)$ over the disk.
We can easily modify $\Pi$ so that it extends over $B$: first fatten $B$
to a tubular neighborhood $\nN(B) \subset M$, then extend $\Pi$ over this
neighborhood by contracting the disk.  We observe finally that if any
singular fiber contains a closed component, this must be a holomorphic
sphere $v : S^2 \to W^\infty$ with $i(v ; v) = -1$, thus an exceptional sphere,
and for an appropriate choice of $J$ it must be contained in $W$ due to 
Lemma~\ref{lemma:spheres}.
Therefore if $W$ is minimal, every component of a singular
fiber has nonempty boundary, implying that the vanishing cycle is homologically
nontrivial.
\end{proof}

\begin{proof}[Proof of Theorem~\ref{thm:T3}]
The argument is \red{mostly} the same as for Theorem~\ref{thm:Lefschetz}, but 
using a specific Morse-Bott finite energy foliation constructed as in
Example~\ref{ex:torsion} (see Remark~\ref{remark:foliationT3}).
In this case the space of leaves in $T^3$ is parametrized
by two disjoint circles, thus the moduli space $\overline{\mM}_0$ provided
by Theorem~\ref{thm:compactness} has two boundary components, and is therefore
an annulus.  The argument produces a Lefschetz fibration
$\Pi : \overline{W}^\infty \setminus Z \to [0,1]\times S^1$, which one can extend 
over $Z$ by fattening it to a neighborhood $\nN(Z)$ and then filling in 
using the homotopy between components of $\p\overline{\mM}_0$.

\red{It remains to show that all singular fibers consist of a union of
a cylinder with an exceptional sphere.  By Theorem~\ref{thm:compactness},
the only other option is a union of two transversely intersecting disks,
which would give a vanishing cycle parallel to the boundary of the fiber.
We can rule this out by looking at the monodromy maps of the fibrations
at $\{0\} \times S^1$ and $\{1\} \times S^1$: these are the two connected
components of the fibration in \eqref{eqn:piT3}.  Thus both monodromy
maps are trivial, but they must also be related to each other by a product
of positive Dehn twists, one for each nontrivial vanishing cycle.
Since the mapping class group of
the cylinder has only one generator, there is no product of positive Dehn
twists that gives the identity, thus there can be no nontrivial vanishing 
cycles.}
\end{proof}

\begin{proof}[Proof of Theorem~\ref{thm:stability}]
For a smooth $1$--parameter family of strong fillings $(W,\omega_t)$ of 
$(M,\xi)$ with $t \in [0,1]$ and a suitable Morse-Bott contact form $\lambda$,
one can find a smooth family of functions 
$f_t : M \to \RR$ such that for $R > \max \{ f_t(m)\ |\ t \in [0,1],
m \in M \}$, the trivial symplectic cobordism $(\sS_{f_t}^R, d(e^a\lambda))$
can be attached to $(W,\omega_t)$, producing an enlarged filling
$(W^R,\omega_t)$ whose symplectic form is fixed near the boundary.
Now attach the cylindrical end as usual and choose a generic smooth 
$1$--parameter family $J_t$ of $\omega_t$--compatible almost complex 
structures that are identical on the end.  If $(M,\xi)$ is planar or is
$(T^3,\xi_0)$, then the result now follows by applying the same
arguments as in the previous two proofs together with
Theorem~\ref{thm:compactnessHomotopy}.
\end{proof}

\begin{proof}[Proof of Theorem~\ref{thm:obstruction}] 
Suppose $(M,\xi)$ is a contact manifold with a positive foliation $\fF$ of
$(M_0,\lambda,J)$ containing an interior stable leaf $u \in \fF$ that is
asymptotically simple: then for any strong filling $(W,\omega)$, we can 
again fill $W^\infty$ with $J$--holomorphic 
curves using Theorem~\ref{thm:compactness}, and we already have a
contradiction if $M_0 \subsetneq M$.  On the other hand if $M_0 = M$,
we can find a point $p$ that lies in some ``different'' leaf $u' \in \fF$,
and then consider for large $n$ the sequence
$u_n \in \mM_0$, where $u_n$ is the unique curve
passing through $(n,p) \in [R,\infty) \times M \subset W^\infty$.  As
$n \to \infty$, a subsequence must converge to $u'$, implying that $u$ 
and $u'$ are diffeomorphic and have ends in
the same Morse-Bott manifolds, which is a contradiction.
\end{proof}

\begin{remark}
\label{remark:Weinstein}
The \emph{Weinstein conjecture} for a contact manifold $(M,\xi)$ asserts that
for any contact form $\lambda$ with $\ker\lambda = \xi$, $X_\lambda$ has
a periodic orbit.  The idea of using punctured holomorphic curves to prove
this is originally due to Hofer \cite{Hofer:weinstein}, and works so far
under a variety of assumptions on $(M,\xi)$ (see also 
\cite{ACH}).  The conjecture for general contact $3$--manifolds
was proved recently by Taubes \cite{Taubes:weinstein}, using Seiberg-Witten
theory, but a general proof using only holomorphic curves is still lacking.

A minor modification of Theorem~\ref{thm:compactness} yields a new proof of
the Weinstein conjecture for any setting in which one can construct a
positive foliation containing an interior stable leaf that is asymptotically
simple,
for instance on the standard $3$--torus, or any contact manifold with
positive Giroux torsion.  The argument is a generalization
of the one used by Abbas-Cieliebak-Hofer \cite{ACH} for planar
contact structures: we replace the
symplectic filling $W$ by a cylindrical symplectic cobordism $\widehat{W}$, 
having $(M, c\lambda)$ for some large constant $c > 0$ at the positive end and
$(M, f\lambda)$ for any smooth positive function $f : M \to \RR$ with
$f < c$ at the negative end.  Then the same compactness argument works
for any sequence of curves $u_n : \dot{\Sigma} \to \widehat{W}$ that is
bounded away from the negative end.  Just as in \cite{ACH},
one can therefore produce a sequence
$u_n$ that runs to $-\infty$ in the negative end and breaks along a
periodic orbit in $(M,f\lambda)$, proving the existence of such an 
orbit.\footnote{The compactness argument in \cite{ACH} contains a minor
gap, as it ignores the possibility of nodal degenerations.  Our argument
fills the gap by showing that only embedded \red{index~$0$} curves can appear 
in such degenerations, thus \red{they are confined to a subset of 
codimension~$2$ and can be avoided by following a generic path to~$-\infty$.}}
\end{remark}

\section{\red{Fillings of $T^3$}}
\label{sec:T3}

We now proceed to the proofs of Theorems~\ref{thm:SteinT3}
and~\ref{thm:sympT3} on fillings of~$T^3$.  \red{The key fact is that
if a strong filling of $(T^3,\xi_0)$ is minimal, then the Lefschetz
fibration given by Theorem~\ref{thm:T3} is an honest symplectic fibration,
i.e.~it has no singular fibers.}  In fact, it is easy to construct two such
fibrations, whose fibers intersect each other exactly once transversely;
the situation is thus analogous to that of Gromov's characterization
of split symplectic forms on $S^2 \times S^2$ (\cite{Gromov}, also
subsequent related work by McDuff \cite{McDuff:rationalRuled}).
We can construct a simple model Stein manifold, which is
symplectomorphic to $T^*T^2$ and carries an
explicit decomposition by two fibrations for which the complex and symplectic
structures both split.  Matching this decomposition with the fibrations
constructed for a general filling via Theorem~\ref{thm:compactness}
gives a \red{symplectic deformation equivalence, which
in the exact case} yields a symplectomorphism via the
Moser isotopy trick.  

There is one subtle point here that doesn't
arise in the closed case: since we intend to carry out the Moser isotopy
on a noncompact manifold, it's important that our diffeomorphism be
sufficiently well behaved near infinity, and this will not generally
be the case without some effort.  \red{To see why not, observe that for
any strong filling $(W,\omega)$ of $(T^3,\xi_0)$, the asymptotics of the
$J$--holomorphic curves in $W^\infty$ given by Theorem~\ref{thm:compactness} 
encode a homotopy invariant of the foliation.  Indeed, suppose
$\{ \gamma_0^\eta \}_{\eta \in S^1}$ and $\{ \gamma_1^\eta \}_{\eta \in S^1}$
are the two Morse-Bott families of Reeb orbits that serve as the asymptotic
limits of the curves in the moduli space~$\mM$.  Then we can
choose a diffeomorphism 
$$
\RR \times S^1 \to \mM : (\rho,\eta) \mapsto u_{(\rho,\eta)}
$$
such that $u_{(\rho,\eta)}$ has asymptotic orbits $\gamma_0^\eta$ and
$\gamma_1^{f(\rho,\eta)}$ for some continuous function 
$$
f : \RR \times S^1 \to S^1,
$$
which has the form $f(\rho,\eta) = \eta$ for $|\rho|$ large due to the
fixed structure of $\mM$ in the cylindrical end.
The map $\rho \mapsto f(\rho,0)$ thus defines a loop in $S^1$ whose
homotopy class in $\pi_1(S^1) = \ZZ$ can be shown
(using Theorem~\ref{thm:compactnessHomotopy}) to be an invariant determined 
by $(W^\infty,\omega)$ 
and~$J$ up to compactly supported deformations.  Now if
$(W_1,\omega_1)$ and $(W_2,\omega_2)$ are two strong fillings that we wish
to prove are symplectomorphic, we'd like to do so by choosing a 
diffeomorphism that both respects the structure of the holomorphic foliations
and is ``compactly supported'' in the sense of respecting the natural
identifications of $W_1^\infty$ and $W_2^\infty$ with 
$[R,\infty) \times T^3$ near infinity.
It is easy enough to modify the foliations slightly so that an appropriate
diffeomorphism can be constructed near infinity, but this will not be
globally extendable unless the above construction gives the same class 
in $\pi_1(S^1)$ for both foliations.  

The upshot is that it is not enough to take only $T^*T^2$ with its
standard complex and symplectic structure as a model filling---rather, we
will need a wider variety of models that come with holomorphic foliations
attaining all possible values in $\pi_1(S^1)$.  We'll construct such
models in \S\ref{subsec:SteinModels}} by performing Luttinger surgery
along the zero section in $T^*T^2$.  Note that unlike the situation in a
closed manifold, the manifolds obtained by surgery are all symplectomorphic,
but the point is that their complex structures (and the resulting
holomorphic foliations) behave differently at infinity.
With these models in place, we'll carry out the Moser deformation argument
in \S\ref{subsec:Moser} to prove Theorem~\ref{thm:SteinT3}.  Finally,
\S\ref{subsec:sympT3} will use the stability of our fibrations under
homotopies (Theorem~\ref{thm:compactnessHomotopy}) to prove
Theorem~\ref{thm:sympT3}.

\subsection{Model fillings and fibrations}
\label{subsec:SteinModels}

As usual, we identify $T^*T^2$ with $T^2 \times \RR^2$ and use coordinates
$(q_1,q_2,p_1,p_2)$, so that the standard symplectic structure is
$\omega_0 = d\lambda_0$, where $\lambda_0 = p_1 \ dq_1 + p_2 \ dq_2$.
Each pair of coordinates $(p_j,q_j)$ for $j = 1,2$ defines a cylinder
$Z_j = \RR \times S^1$ so that we have a natural diffeomorphism
$$
T^2 \times \RR^2 = Z_1 \times Z_2.
$$
We define on each $Z_j$ the standard complex structure
$i \p_{p_j} = \p_{q_j}$ and symplectic structure $\omega_0 =
d p_j \wedge d q_j$, so that $\omega_0$ on $Z_1 \times Z_2$ is the direct
sum $\omega_0 \oplus \omega_0$, and
we can similarly define a compatible complex structure $i$ on 
$T^2 \times \RR^2$ as $i \oplus i$.
This makes $(T^2 \times \RR^2,\omega_0,i)$ into a Stein manifold, with
plurisubharmonic function $f : T^2 \times \RR^2 \to [0,\infty) :
(q,p) \mapsto \frac{1}{2}|p|^2$ such that $-df \circ i = \lambda_0$,
and the latter induces the Liouville vector field
$$
\nabla f = p_1 \p_{p_1} + p_2 \p_{p_2},
$$
whose flow is given by $\varphi^t_{\nabla f}(q,p) = (q,e^t p)$.
The restriction of $\lambda_0$ to $\p(T^2 \times \DD) = T^3$
gives the standard contact form, which we'll denote in the following
by~$\alpha_0$.  We will use the 
coordinates $(q,p)$ on $T^3$ with the assumption that $|p| = 1$, and
sometimes also write $(p_1,p_2) = (\cos 2\pi\theta,\sin 2\pi\theta)$
with $\theta \in S^1$.

We can use the flow of $\nabla f$ to embed the symplectization of $T^3$
into $(T^2 \times \RR^2,\omega_0)$: explicitly,
$$
\Phi : (\RR\times T^3, d(e^a\alpha_0)) \hookrightarrow 
(T^2 \times \RR^2,\omega_0) : (a,(q,p)) \mapsto (q,e^a p)
$$
satisfies $\Phi^*\lambda_0 = e^a \alpha_0$.
Using this to identify $(0,\infty) \times T^3$ with the complement of
$T^2 \times \DD$, we can now choose a new almost complex structure
$J_0$ with $J_0 \p_{p_j} = g(|p|) \p_{q_j}$
for some function $g$, so that $J_0 = i$ near the zero section and becomes
$\RR$--invariant on the end, in other words
$J_0|_{[0,\infty) \times T^3} \in \jJ_{\alpha_0}(T^3)$.  This choice of
$J_0$ has precisely the form on $[0,\infty) \times T^3$ that
was used in Example~\ref{ex:torsion} (via Remark~\ref{remark:foliationT3}).
In terms of the splitting $T^2 \times \RR^2 = Z_1 \times Z_2$,
the cylinders $Z_1 \times \{*\}$ and $\{*\} \times Z_2$ are now finite
energy $J_0$--holomorphic curves, and those which lie entirely in
$[0,\infty) \times T^3$ reproduce the 
foliations constructed in Example~\ref{ex:torsion}.  In particular,
each cylinder $Z_1 \times \{*\}$ is asymptotic to a pair of Reeb orbits
in the Morse-Bott tori $\{ \theta = 0, 1/2 \}$ with the same value of
the coordinate $q_2 \in S^1$ at both ends, and a corresponding statement is 
true for $\{*\} \times Z_2$ with the Morse-Bott tori $\{ \theta = 1/4, 3/4 \}$.

\red{We shall now construct more holomorphically foliated model fillings
using surgery along
the zero section in $T^2 \times \RR^2$.}  The following is a special case
of the surgery along a Lagrangian $2$--torus in a symplectic $4$--manifold
introduced by Luttinger in \cite{Luttinger}; our formulation is borrowed from
\cite{ADK:Luttinger}.

For $r > 0$, let $K_r = T^2 \times [-r,r] \times [-r,r]$.  Choose constants 
$\sigma := (c,k_1,k_2) \in (0,\infty) \times \ZZ^2$
and a smooth cutoff function 
$\beta : \RR \to [0,1]$ such that
\begin{itemize}
\item
$\beta = 0$ on a neighborhood of $(-\infty,-1]$,
\item
$\beta = 1$ on a neighborhood of $[1,\infty)$,
\item
$\int_{-1}^1 t \beta'(t)\ dt = 0$.
\end{itemize}
Define also the function
$\chi : \RR \to \RR$ to equal $0$ on $(-\infty,0)$ and $1$ on $[0,\infty)$.
Then there is a symplectomorphism
$\psi_{\sigma} : (K_{2c} \setminus K_{c},\omega_0) \to 
(K_{2c} \setminus K_{c},\omega_0)$ given by
$$
\psi_{\sigma}(q_1,q_2,p_1,p_2) = \left(q_1 + k_1 \chi(p_2) 
\beta\left(\frac{p_1}{c} \right),
q_2 + k_2 \chi(p_1) \beta\left(\frac{p_2}{c}\right), p_1,p_2\right).
$$
We construct a new symplectic manifold 
$(W_{\sigma},\omega_{\sigma})$ by deleting
$K_{c}$ from $T^2 \times \RR^2$ and gluing in
$K_{2c}$ via $\psi_{\sigma}$:
$$
(W_{\sigma},\omega_{\sigma}) = 
((T^2 \times \RR^2) \setminus K_{c},\omega_0)
\cup_{\psi_{\sigma}} (K_{2c},\omega_0).
$$
In the following, we shall regard both 
$((T^2 \times \RR^2) \setminus K_{c},\omega_0)$
and $(K_{2c},\omega_0)$ as symplectic subdomains of 
$(W_{\sigma},\omega_{\sigma})$, \red{and fix local coordinates as follows.
Let $(q_1,q_2,p_1,p_2)$ denote the usual coordinates on
$(T^2 \times \RR^2) \setminus K_c$, now viewed as a subset of $W_\sigma$,
and on the glued in copy of $K_{2c} \subset W_\sigma$, denote the natural
coordinates by $(Q_1,Q_2,P_1,P_2)$.  Thus on the region of overlap,
$(q,p) = \psi_\sigma(Q,P)$ and
$$
\omega_\sigma = d p_1 \wedge d q_1 + d p_2 \wedge d q_2 =
d P_1 \wedge d Q_1 + d P_2 \wedge d Q_2.
$$
Observe that the $(Q,P)$--coordinates can be extended globally so that they
define a symplectomorphism
$(Q,P) : (W_{\sigma},\omega_\sigma) \to (T^2 \times \RR^2,\omega_0)$.}

If $2c = e^R$, then the part of $(W_{\sigma},\omega_{\sigma})$ 
identified
with $((T^2 \times \RR^2) \setminus K_{c},\omega_0)$ naturally contains a
symplectization end of the form $([R,\infty) \times T^3, d(e^a\alpha_0))$.

\begin{lemma}
\label{lemma:globalPrimitive}
$W_\sigma$ admits a $1$--form $\lambda_\sigma$ such that
$d\lambda_\sigma = \omega_\sigma$ and 
$\lambda_\sigma|_{[R,\infty) \times T^3} = e^a\alpha_0$.
\end{lemma}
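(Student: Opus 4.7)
The plan is to build $\lambda_\sigma$ piecewise on the two overlapping charts whose union is $W_\sigma$, and then glue. On the exterior piece $(T^2 \times \RR^2) \setminus K_c \subset W_\sigma$, take $\lambda_\sigma := \lambda_0 = p_1\, dq_1 + p_2\, dq_2$; this satisfies $d\lambda_\sigma = \omega_\sigma$ and already has the required form $e^a\alpha_0$ on the end $[R,\infty) \times T^3$ via the identification $\Phi$. On the glued-in copy of $K_{2c}$, the natural candidate primitive is $\Lambda := P_1\, dQ_1 + P_2\, dQ_2$, but it typically fails to match $\psi_\sigma^*\lambda_0$ on the overlap $V := K_{2c} \setminus K_c$, so the task reduces to correcting $\Lambda$ by an exact form on $K_{2c}$.

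First I would compute $\eta := \psi_\sigma^*\lambda_0 - \Lambda$ directly from the formula for $\psi_\sigma$. Since $\psi_\sigma$ fixes the $P$--coordinates and shifts each $Q_j$ by an integer multiple of a function of $P$, the pullback $\psi_\sigma^* dq_j$ equals $dQ_j$ plus a $dP$--term, and a short calculation yields
\[
\eta = \tfrac{k_1}{c}\, P_1\, \chi(P_2)\, \beta'(P_1/c)\, dP_1 + \tfrac{k_2}{c}\, P_2\, \chi(P_1)\, \beta'(P_2/c)\, dP_2.
\]
This is closed on $V$ (since $\psi_\sigma$ is a symplectomorphism) and smooth there, because wherever $\beta'$ is nonzero the corresponding factor $\chi$ is locally constant on $V$, and vice versa.

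The critical step is to verify that $[\eta] = 0 \in H^1(V;\RR) \cong \RR^3$. Two of the three periods, around loops in the $Q_1$-- and $Q_2$--directions, vanish trivially because $\eta$ has no $dQ_j$ components. The third, around a loop encircling $K_c$ in the $(P_1,P_2)$--plane, can be evaluated on $\partial [-\tfrac{3c}{2},\tfrac{3c}{2}]^2$: on each side only one term of $\eta$ contributes, and after rescaling that contribution reduces to $\pm k_j c \int_{-1}^1 t\, \beta'(t)\, dt$, which is precisely the moment condition imposed on $\beta$. Hence $\eta = dh$ for some smooth $h : V \to \RR$, and a convenient explicit choice is $h = \chi(P_2)\,\tilde G_1(P_1) + \chi(P_1)\,\tilde G_2(P_2)$ with $\tilde G_j(P) := \int_{-c}^P (k_j s/c)\,\beta'(s/c)\, ds$; by the same moment condition each $\tilde G_j$ is supported in $[-c,c]$ and flat (all derivatives vanish) at the endpoints $\pm c$.

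To finish, I would extend $h$ to a smooth function $\tilde h$ on $K_{2c}$ by replacing the step function $\chi$ with a smooth cutoff $\tilde\chi$ inside $K_c$; the flatness of each $\tilde G_j$ at $\pm c$ ensures that $\tilde h$ agrees with $h$ to infinite order across $\partial K_c$, giving a smooth global function. Setting $\lambda_\sigma := \Lambda + d\tilde h$ on $K_{2c}$, one has $d\lambda_\sigma = \omega_\sigma$ and $\lambda_\sigma|_V = \Lambda + \eta = \psi_\sigma^*\lambda_0$, so this glues smoothly to the choice $\lambda_0$ on the exterior. The main obstacle is the interplay in this last step: the moment condition on $\beta$ must simultaneously kill the cohomological period (so that a primitive exists) and make the explicit primitive flat at the surgery boundary (so that it extends smoothly across $\partial K_c$).
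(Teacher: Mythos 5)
Your proof is correct and follows essentially the same route as the paper's: the paper also sets $\lambda_\sigma = P_1\,dQ_1 + P_2\,dQ_2 + d\widehat{\Phi}$, where $\widehat{\Phi}$ is (up to the choice of interpolation into $K_c$) exactly your $\tilde h$ --- note that your $\tilde G_j$ equals $k_j f$ for the paper's function $f(s) = \frac{1}{c}\int_{-c}^s t\,\beta'(t/c)\,dt$ --- and the moment condition on $\beta$ enters in precisely the same way. The only difference is presentational: you make the vanishing of the class of $\eta$ in $H^1(V;\RR)$ explicit via a period computation before exhibiting the primitive, whereas the paper writes the primitive down directly.
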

\begin{proof}
The $1$--form $e^a\alpha_0$ is the restriction to $[R,\infty) \times T^3$
of $\lambda_0 := p_1\ dq_1 + p_2\ dq_2$, which is a well defined primitive
of $\omega_0 = \omega_\sigma$ on $(T^2 \times \RR^2) \setminus K_c$.
Define $f(s) = \frac{1}{c} \int_{-c}^s t \beta'(t/c)\ dt$, a smooth function
with support in
$(-c,c)$ due to our assumptions on~$\beta$.  Then there is a smooth
function $\Phi : (T^2 \times \RR^2) \setminus K_c \to \RR$ defined by
$$
\Phi(q_1,q_2,p_1,p_2) =
\begin{cases}
k_2 f(p_2) & \text{ if $p_1 \ge c$,}\\
k_1 f(p_1) & \text{ if $p_2 \ge c$,}\\
0      & \text{ otherwise,}
\end{cases}
$$
and a brief computation shows that on $(T^2 \times \RR^2) \setminus K_c$,
$\lambda_0 = P_1\ dQ_1 + P_2\ dQ_2 + d\Phi$.  Now choosing a smooth
function $\widehat{\Phi} : W_\sigma \to \RR$ that matches $\Phi$
on $[R,\infty) \times T^3$ and vanishes in $K_c$, a suitable primitive
is given by
$$
\lambda_\sigma = P_1\ dQ_1 + P_2\ dQ_2 + d\widehat{\Phi}.
$$
\end{proof}

We wish to define an $\omega_\sigma$--compatible almost complex structure
$J_\sigma$ on $W_\sigma$ that matches $J_0$ on the end
$[R,\infty) \times T^3$, i.e.~for $|p| \ge e^R$, $J_\sigma$ satisfies
$- J_\sigma \p_{q_j} = G(|p|) \p_{p_j}$ for some positive smooth function~$G$.  
Switching to $(Q,P)$--coordinates in $K_{2c}$,
$J_\sigma$ is now determined in 
$K_{2c} \cap \left([R,\infty) \times T^3\right)$ by the conditions
\begin{equation*}
\begin{split}
-J_\sigma \p_{Q_1} &= \p_{P_1} - G(|P|) \frac{k_1}{c} 
\chi(P_2) \beta'(P_1 / c) \ \p_{Q_1},\\
-J_\sigma \p_{Q_2} &= \p_{P_2} - G(|P|) \frac{k_2}{c}
\chi(P_1) \beta'(P_2 / c) \ \p_{Q_2}.
\end{split}
\end{equation*}
Thus if we replace $\chi$ in this expression by the cutoff function 
$t \mapsto \beta(t/c)$, which equals $\chi$ outside of
$[-c,c]$, we obtain the desired extension of $J_\sigma$ over $K_{2c}$.
The following lemma is immediate.

\begin{lemma}
\label{lemma:skewedFoliation}
For each constant $(\rho,\eta) \in \RR \times S^1$, the surfaces 
$Z_1^{(\rho,\eta)} := \{ (P_2,Q_2) = (\rho,\eta) \}$ and 
$Z_2^{(\rho,\eta)} := \{ (P_1,Q_1) = (\rho,\eta) \}$ in $W_{\sigma}$
are images of embedded finite energy $J_\sigma$--holomorphic cylinders.
Moreover,
\begin{enumerate}
\item
Each point in $W_\sigma$ is the unique intersection point of a unique
pair $Z_1^{(\rho,\eta)}$ and $Z_2^{(\rho',\eta')}$,
whose tangent spaces at that point are symplectic complements.
\item
For $|\rho| \ge c$, the cylinders $Z_1^{(\rho,\eta)}$ and
$Z_2^{(\rho,\eta)}$ are identical to $Z_1 \times \{(\rho,\eta)\}$
and $\{(\rho,\eta)\} \times Z_2$ respectively in $T^2 \times \RR^2 =
Z_1 \times Z_2$.  This collection therefore contains all of the curves
in $[R,\infty) \times T^3$ constructed via 
Example~\ref{ex:torsion} and Remark~\ref{remark:foliationT3}.
\end{enumerate}
\end{lemma}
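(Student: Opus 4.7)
The plan is to verify every claim by working in the global coordinates $(Q,P) : W_\sigma \to T^2 \times \RR^2$, in which $Z_1^{(\rho,\eta)}$ and $Z_2^{(\rho,\eta)}$ are simply the coordinate planes $\{P_2 = \rho,\, Q_2 = \eta\}$ and $\{P_1 = \rho,\, Q_1 = \eta\}$; in particular they are smoothly embedded cylinders diffeomorphic to $\RR \times S^1$. Their tangent planes are $\Span\{\p_{Q_j},\p_{P_j}\}$ for $j = 1,2$, which are the symplectic complements of one another with respect to $\omega_\sigma = dP_1 \wedge dQ_1 + dP_2 \wedge dQ_2$, and every point of $W_\sigma$ has unique $(Q,P)$-coordinates and therefore lies on exactly one cylinder from each family, meeting only there. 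This disposes of item (1) modulo holomorphicity.

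Next I would check $J_\sigma$-holomorphicity by showing that $J_\sigma$ preserves each coordinate plane $\Span\{\p_{Q_j},\p_{P_j}\}$. On $K_c$ (where $J_\sigma = i$) and on $\{|P| \ge e^R\}$ (where $J_\sigma = J_0$ is diagonal with respect to the product splitting $Z_1 \times Z_2$), this is immediate. On the transition region $K_{2c} \cap ([R,\infty)\times T^3)$, the explicit defining formula for $J_\sigma$ given just above the lemma shows that $J_\sigma \p_{Q_j} = -\p_{P_j} + (\text{scalar}) \cdot \p_{Q_j}$, so each coordinate plane is $J_\sigma$-invariant there as well. Finite energy follows because in the cylindrical end each cylinder reduces (after the computation for item (2) below) to a standard $\RR$-invariant cylinder from Example~\ref{ex:torsion}, asymptotic to Reeb orbits on one of the Morse--Bott tori $\{\theta \in \{0,1/2\}\}$ or $\{\theta \in \{1/4,3/4\}\}$.

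For item (2), the key observation is that when $|\rho|\ge c$, one has $\beta(\rho/c) \in \{0,1\}$ and $\chi(\rho) \in \{0,1\}$; combined with $k_j \in \ZZ$, this means that on the cylinder $\{P_2 = \rho, Q_2 = \eta\}$ the $T^2$-shifts in
\[
\psi_\sigma(Q,P) = \bigl(Q_1 + k_1 \chi(P_2)\beta(P_1/c),\; Q_2 + k_2 \chi(P_1)\beta(P_2/c),\; P_1,\; P_2\bigr)
\]
evaluate to integer translations of $Q_1$ and $Q_2$, and hence act trivially modulo $\ZZ^2$. Consequently this cylinder, viewed in the $(q,p)$-coordinates on the outer piece $(T^2\times\RR^2)\setminus K_c \subset W_\sigma$, coincides with the standard cylinder $\{p_2 = \rho, q_2 = \eta\} = Z_1 \times \{(\rho,\eta)\}$; an analogous computation handles $Z_2^{(\rho,\eta)}$. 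Since $J_\sigma$ was chosen to agree with the complex structure $J_0$ of Remark~\ref{remark:foliationT3} on the symplectization end, these cylinders restrict there to precisely the $J_0$-holomorphic leaves produced in Example~\ref{ex:torsion}.

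The only delicate point in the whole argument is the bookkeeping between the $(q,p)$- and $(Q,P)$-coordinate descriptions, specifically the observation that the integrality of $k_1$ and $k_2$ causes $\psi_\sigma$ to become trivial modulo $\ZZ^2$ precisely on the region where the cutoff $\beta(P_j/c)$ takes values in $\{0,1\}$. This is what allows the surgered cylinders to be recognized as the standard ones on the end, and it is consistent with the author's characterization of the lemma as immediate.
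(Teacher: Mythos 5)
Your verification is correct, and it is exactly the computation the paper has in mind: the paper states this lemma with no proof at all (``The following lemma is immediate''), so the direct check in the global $(Q,P)$--chart --- $J_\sigma$--invariance of the coordinate planes from the displayed formula for $J_\sigma\p_{Q_j}$, the symplectic-complement and unique-intersection claims from $\omega_\sigma = dP_1\wedge dQ_1 + dP_2\wedge dQ_2$, and the asymptotic match with Example~\ref{ex:torsion} --- is precisely what is being left to the reader. One imprecision in your treatment of item (2), which you should fix even though it does not affect the conclusion: on $Z_1^{(\rho,\eta)}$ the shift in the \emph{first} coordinate is $k_1\chi(P_2)\beta(P_1/c) = k_1\chi(\rho)\beta(P_1/c)$, and since $P_1$ ranges over all of $\RR$ along this cylinder, $\beta(P_1/c)$ takes every value in $[0,1]$; so this shift is \emph{not} an integer, and $\psi_\sigma$ is \emph{not} trivial modulo $\ZZ^2$ on the cylinder (it shears the $q_1$--coordinate --- this is the whole point of the Luttinger twist). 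The set-level identification $Z_1^{(\rho,\eta)} = Z_1\times\{(\rho,\eta)\}$ for $|\rho|\ge c$ nevertheless holds because $Z_1\times\{(\rho,\eta)\}$ is cut out by conditions on $(p_2,q_2)$ alone, and the only shift that matters, $q_2 = Q_2 + k_2\chi(P_1)\beta(\rho/c)$, is indeed an integer translation once $\beta(\rho/c)\in\{0,1\}$; the shearing of $q_1$ is invisible because that coordinate is unconstrained on the cylinder. The same remark applies to your closing paragraph.
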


The essential difference between $(W_\sigma,\omega_\sigma)$ and
$(T^2 \times \RR^2,\omega_0)$ is that they each come with holomorphic
foliations that behave differently at infinity: the cylinder
$Z_1^{(\rho,\eta)}$ for instance has one end asymptotic to the Reeb orbit at
$\{ \theta=1/2, q_2 = \eta \}$, while its other end approaches the orbit at
$\{ \theta=0, q_2 = \eta + k_2\beta(\rho/c) \}$.  Thus the data
$\sigma = (c,k_1,k_2)$ determine offsets within the respective 
families of Morse-Bott orbits at one end of each cylinder.

\subsection{Classification up to symplectomorphism}
\label{subsec:Moser}

Assume $(W,\omega)$ is a \red{minimal strong} filling of $(T^3,\xi_0)$.
Adopting the notation from \S\ref{sec:proofs}, $(W^{R},\omega)$ is the enlarged
filling obtained by attaching a trivial symplectic cobordism
such that the induced contact form at $\p W^{R}$ is $e^R \alpha_0$,
and we can further attach a cylindrical end 
$([R,\infty) \times T^3, d(e^a\alpha_0))$ to
construct $(W^\infty,\omega)$.  \red{If $(W,\omega)$ is an exact filling with
primitive $\lambda$, then we can also assume $\lambda$ extends over $W^\infty$
so that $\lambda|_{[R,\infty) \times T^3} = e^a\alpha_0$.}
Choosing an almost complex structure
$J$ that is generic in $W^{R}$ and has the standard form
$J_0 \in \jJ_{\alpha_0}(T^3)$ on $[R,\infty) \times T^3$, 
we start from a finite energy
foliation constructed as in Example~\ref{ex:torsion} (via
Remark~\ref{remark:foliationT3}), consisting of cylinders with ends
asymptotic to orbits in the two Morse-Bott tori $Z = \{ \theta \in
\{0, 1/2 \} \}$, then use Theorem~\ref{thm:compactness} 
to produce a moduli space $\mM_1$ of $J$--holomorphic cylinders foliating
$W^\infty$.  \red{Since $(W,\omega)$ is minimal, this produces a} smooth
fibration $\Pi_1 : W^\infty \to \mM_1$, where both the fiber and the
base are diffeomorphic to $\RR\times S^1$.

We can now repeat the same trick starting from a different foliation
of $T^3$: let $Z' = \{ \theta \in \{1/4,3/4\} \}$, a pair of
Morse-Bott tori with Reeb orbits pointing in the direction orthogonal
to those on~$Z$.  Then by a minor modification of the construction in
Example~\ref{ex:torsion}, the fibration
\begin{equation*}
\begin{split}
T^3 \setminus Z' &\to \{0,1\} \times S^1 \\
({q_1},{q_2},\theta) &\mapsto
\begin{cases}
(0,q_1) & \text{ if $\theta \in (-1/4,1/4)$,}\\
(1,q_1) & \text{ if $\theta \in (1/4,3/4)$}
\end{cases}
\end{split}
\end{equation*}
can also be presented as the projection to $T^3$ of a positive finite energy
foliation on $\RR \times T^3$, with the same contact form and almost
complex structure as before.  This yields a second moduli space 
$\mM_2$ of $J$--holomorphic cylinders foliating $W^\infty$, and a
corresponding fibration $\Pi_2 : W^\infty \to \mM_2 \cong
\RR \times S^1$.

\begin{lemma}
\label{lemma:oneIntersection}
Any $u_1 \in \mM_1$ and $u_2 \in \mM_2$ intersect each other exactly once,
with intersection index~$+1$.
\end{lemma}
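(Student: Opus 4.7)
The plan is to interpret $u_1 \cap u_2$ via Siefring's intersection theory and compute the resulting homotopy invariant as the degree of a proper map between cylinders. Each $u_1 \in \mM_1$ has both asymptotic ends limiting to simply covered Reeb orbits on the Morse-Bott tori $Z = \{\theta \in \{0, 1/2\}\}$, while each $u_2 \in \mM_2$ has its ends on $Z' = \{\theta \in \{1/4, 3/4\}\}$. Since $Z \cap Z' = \emptyset$, the asymptotic orbits of $u_1$ and $u_2$ are pairwise disjoint, so Siefring's intersection number $i(u_1; u_2) \in \ZZ$ is well-defined, is homotopy invariant, and carries no asymptotic correction terms: it equals the algebraic count of intersection points in $W^\infty$. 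Positivity of intersections then forces every intersection to contribute $+1$, so $i(u_1; u_2)$ agrees with the unsigned geometric intersection number, and it suffices to show $i(u_1; u_2) = 1$.

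Since $(W,\omega)$ is minimal, Theorem~\ref{thm:T3} provides a \emph{smooth} fibration (no singular fibers) $\Pi_1 : W^\infty \to \mM_1 \cong \RR \times S^1$. Restricting to $u_2$ yields a proper smooth map $\Pi_1|_{u_2} : u_2 \to \mM_1$ from one open cylinder to another whose fiber over any $u_1 \in \mM_1$ is precisely $u_1 \cap u_2$, so $i(u_1; u_2)$ equals the degree of $\Pi_1|_{u_2}$.

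To compute this degree, I analyze the behavior at infinity. By Remark~\ref{remark:foliationT3} and the construction of Example~\ref{ex:torsion}, the two ends of the annulus $\mM_1$ correspond, in the cylindrical end of $W^\infty$, to the two components of $T^3 \setminus Z$, each parameterized by the constant $q_2$-coordinate of a model cylinder. Meanwhile the two asymptotic ends of $u_2$ approach orbits at $\theta = 1/4$ and $\theta = 3/4$, which lie on \emph{opposite} sides of $Z$ (one in $(0, 1/2)$ and one in $(1/2, 1)$); hence $\Pi_1|_{u_2}$ sends the two ends of $u_2$ to the two different ends of $\mM_1$. Moreover, each end of $u_2$ is a cylinder at fixed $q_1 = c_1$ wrapping once around the $q_2 \in S^1$ direction (the Reeb direction at the relevant Morse-Bott torus), so the $S^1$-winding of $\Pi_1|_{u_2}$ at each end is $+1$. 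A proper map between open cylinders whose ends pair up bijectively with winding $+1$ on the $S^1$-factor has degree $+1$, giving $i(u_1; u_2) = 1$.

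The main obstacle will be the orientation bookkeeping: one must verify that the Reeb-direction windings at the two asymptotic ends of $u_2$ contribute with matching (rather than opposing) signs to the degree, after accounting for the orientations on $\mM_1$ and $u_2$ induced by the symplectic and $J$-holomorphic structures. This is a somewhat delicate but essentially routine tracking exercise using the asymptotic parameterizations from Example~\ref{ex:torsion}; the homotopy invariance provided by Theorem~\ref{thm:compactnessHomotopy} means one may perform the check in any convenient model, e.g.~on the standard $(T^*T^2, \omega_0)$ where both foliations split and the claim is transparent.
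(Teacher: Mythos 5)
Your proof is correct and in spirit follows the same path as the paper's, but it dresses the argument in an unnecessary degree-theoretic layer and has one imprecise invocation worth flagging.

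The paper's argument is more direct: for $u_1, u_2$ near the boundaries of their respective moduli spaces, both curves lie in the cylindrical end $[R,\infty)\times T^3$ and are explicit model cylinders for which one simply verifies a single transverse intersection; since $u_1$ and $u_2$ share no asymptotic orbits, $i(u_1;u_2)$ has no asymptotic correction, so $i(u_1;u_2)=1$ for those model representatives; homotopy invariance of the Siefring intersection number then gives $i(u_1;u_2)=1$ for \emph{all} pairs; finally, disjoint orbits plus positivity of intersections upgrade this to exactly one transverse intersection point. You reach the same conclusion, and your steps~1--2 are exactly the paper's. However, your degree computation via $\Pi_1|_{u_2}$ is a detour: the properness of $\Pi_1\circ u_2$ and the identification of its degree with the algebraic intersection count are facts that themselves rest on the same positivity-of-intersections and no-asymptotic-contribution ingredients, so one might as well compute $i(u_1;u_2)$ directly on a convenient pair of curves in the cylindrical end, which is what you in effect do in paragraph three. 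The framing also uses the global fibration $\Pi_1$ (hence the minimality hypothesis and Theorem~\ref{thm:compactness}), which is heavier machinery than needed for this lemma.

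One genuine imprecision: in your final paragraph you appeal to Theorem~\ref{thm:compactnessHomotopy} to ``perform the check on the standard $(T^*T^2,\omega_0)$.'' That theorem concerns homotopies of $J$ on a \emph{fixed} filling $(W^\infty,\omega)$; it does not by itself let you replace the unknown filling by $T^*T^2$ --- that replacement is essentially what Theorem~\ref{thm:SteinT3} is trying to establish, so invoking it here would be circular. Fortunately this is a dispensable remark: the explicit computation you describe in paragraph three already takes place in the cylindrical end of $W^\infty$ (where the curves coincide with the model cylinders of Example~\ref{ex:torsion}), and the homotopy invariance of Siefring's $i(\cdot;\cdot)$ within $W^\infty$ is all that is needed to propagate the count to every pair. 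Rephrasing the last step this way, and replacing the degree argument with a direct intersection count of two model cylinders, would make the proof both shorter and cleaner, matching the paper's.
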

\begin{proof}
One can verify this explicitly from the foliations on $[R,\infty) \times T^3$
whenever both curves are near the boundaries
of their respective moduli spaces, and since they have
no asymptotic orbits in common, this implies $i(u_1;u_2) = 1$.  The latter is
a homotopy invariant condition, and the fact that the two curves have separate
orbits guarantees that there is never any asymptotic contribution, hence
there is always a unique intersection point $u_1(z_1) = u_2(z_2)$, 
contributing $+1$ to the intersection count.
\end{proof}

It follows that the map
$$
\Pi_1 \times \Pi_2 : W^\infty \to \mM_1 \times \mM_2
$$
is a diffeomorphism.  Our goal is to use this to identify $W^\infty$ with 
one of the model \red{fillings} constructed in \S\ref{subsec:SteinModels}.

For $\theta \in \{0,1/4,1/2,3/4\}$, denote by $\pP_{\theta}$ the
$1$--dimensional manifold of Morse-Bott orbits foliating the $2$--torus
whose $\theta$--coordinate has the given value:
each of these can be naturally identified with
$S^1$ using either the $q_1$ or $q_2$--coordinate.  Then
as explained in the appendix, there exist real line bundles
$$
E^\theta \to \pP_\theta,
$$
where the fibers $E^\theta_x$ are $1$--dimensional eigenspaces of the
asymptotic operators at $x \in \pP_\theta$, and the asymptotic formula
\eqref{eqn:asympFormula} defines ``asymptotic evaluation maps''
\begin{align*}
\mM_1 &\xrightarrow{\ev_0} E^0 &
\mM_1 &\xrightarrow{\ev_{1/2}} E^{1/2} \\
\mM_2 &\xrightarrow{\ev_{1/4}} E^{1/4} &
\mM_2 &\xrightarrow{\ev_{3/4}} E^{3/4}.
\end{align*}

For any $\sigma = (c,k_1,k_2) \in (0,\infty) \times \ZZ^2$,
let $\mM_1^\sigma$ and $\mM_2^\sigma$ denote the moduli spaces of
$J_\sigma$--holomorphic cylinders $Z_1^{(\rho,\eta)}$ and $Z_2^{(\rho,\eta)}$ 
respectively in $(W_\sigma,\omega_\sigma)$, constructed in the
previous section: as a special case,
$\mM_1^0$ and $\mM_2^0$ will denote the spaces of $J_0$--holomorphic
cylinders $Z_1 \times \{*\}$, $\{*\} \times Z_2$ in 
$(T^2\times \RR^2,\omega_0)$.  \red{These last two moduli spaces} 
are each canonically identified
with $\RR \times S^1$, and they also come with asymptotic evaluation
maps $\ev_\theta^0$, defined as above.  These are manifestly diffeomorphisms 
and have the property that the resulting maps
\begin{equation}
\label{eqn:evComposition}
\begin{split}
(\ev_\theta^0)^{-1} \circ \ev_\theta : \mM_1 \to \mM_1^0 &
\text{ for $\theta = 0,1/2$,}\\
(\ev_\theta^0)^{-1} \circ \ev_\theta : \mM_2 \to \mM_2^0 &
\text{ for $\theta = 1/4,3/4$}
\end{split}
\end{equation}
are proper: indeed, for any $u \in \mM_j$ outside of some compact subset,
they define the natural identification between curves in $\mM_j$ and
$\mM_j^0$ that are contained in the cylindrical end.

\begin{lemma}
The maps defined in \eqref{eqn:evComposition} are diffeomorphisms.
\end{lemma}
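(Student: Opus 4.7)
The plan is to show that each of the maps $(\ev_\theta^0)^{-1} \circ \ev_\theta : \mM_j \to \mM_j^0$ is a proper local diffeomorphism between two connected smooth $2$--manifolds, each diffeomorphic to $\RR \times S^1$, hence a finite covering projection, and then pin down the degree via the behavior on the cylindrical end. Properness has already been established, so the real work lies in checking that the differential of $\ev_\theta : \mM_j \to E^\theta$ is an isomorphism at every stable leaf.

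First I would verify that $d\ev_\theta$ is injective at each $u \in \mM_j$. By Proposition~\ref{prop:automatic} combined with the stability hypothesis, $T_u \mM_j$ is identified with the $2$--dimensional kernel of the linearized Cauchy--Riemann operator on $u$, and Siefring's asymptotic formula assigns to each $\eta \in T_u \mM_j$ a leading asymptotic datum at the $\theta$--end recording both a tangent direction to $\pP_\theta$ at the asymptotic orbit of $u$ and an eigencoefficient in $E^\theta_x$. A nontrivial element of $\ker d\ev_\theta$ would then be a deformation of $u$ whose $\theta$--end decays strictly faster than the smallest positive eigenvalue of the relevant asymptotic operator. Since $u$ lies in a smooth local $2$--parameter family of pairwise disjoint embedded leaves foliating a neighborhood of $u$ in $W^\infty$ (Theorem~\ref{thm:compactness}, Step~7), this excess asymptotic decay would raise the asymptotic contribution to the Siefring intersection number between $u$ and a nearby leaf $u'$ above the homotopy-invariant value $i(u;u') = 0$, contradicting positivity of intersections. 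Hence $d\ev_\theta$ is injective, and by dimension count it is an isomorphism.

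Given local bijectivity, $(\ev_\theta^0)^{-1} \circ \ev_\theta$ is a proper local diffeomorphism between two connected copies of $\RR \times S^1$, and is therefore a finite covering map. The degree can be read off from the behavior on the cylindrical end $[R,\infty) \times T^3$: outside a compact subset of $\mM_j$, each leaf coincides with its corresponding $J_0$--holomorphic model cylinder in $\mM_j^0$ provided by Example~\ref{ex:torsion} and Remark~\ref{remark:foliationT3}, so the composition is the identity there. Thus the covering has degree one and the map is a diffeomorphism. The statement for $\mM_2$ with $\theta \in \{1/4, 3/4\}$ follows by the identical argument after interchanging the roles of the two Morse--Bott foliations on $T^3$.

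The anticipated main obstacle is the injectivity step: it requires combining automatic transversality (to pin down $T_u\mM_j$), Siefring's relative asymptotic expansion (to extract the leading term at the $\theta$--end of a kernel element), and the intersection-theoretic rigidity produced by the foliation property (to turn the excess asymptotic decay into a contradiction). Given the framework already assembled in \S\ref{sec:compactness}, the passage from local diffeomorphism to covering and the identification of the degree via the model at infinity are routine consequences.
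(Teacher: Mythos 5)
Your global structure is the same as the paper's: once the maps are known to be local diffeomorphisms, properness plus the fact that they restrict to the identity outside a compact subset of $\RR\times S^1$ forces them to be degree-one coverings, hence diffeomorphisms. The paper phrases this as ``a local diffeomorphism with compact support on a cylinder is a global diffeomorphism,'' and it gets the local diffeomorphism property by citing Lemma~\ref{lemma:evalInfty} from the appendix. You correctly identify the local injectivity of $d\ev_\theta$ as the crux, but your argument for it has a gap.

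The problem is the step from ``a nontrivial $v\in\ker d\ev_\theta(u)$ has excess asymptotic decay'' to ``the asymptotic contribution to $i(u;u')$ is positive for a nearby leaf $u'$.'' The quantity controlling the asymptotic contribution to $i(u;u_\tau)$ is the winding of the \emph{relative} asymptotic eigenfunction of the two actual curves $u$ and $u_\tau$, which agrees with the leading eigenfunction of $v=\tfrac{d}{d\tau}u_\tau|_{\tau=0}$ only to first order in $\tau$. It is consistent with everything you have assumed that the extremal-winding coefficient of $u_\tau-u$ is nonzero but of order $\tau^2$: then every pair $(u,u_\tau)$ has vanishing asymptotic contribution (so no contradiction with $i(u;u_\tau)=0$ and disjointness), while the linearized section $v$ still has excess decay. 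So positivity of intersections between distinct leaves does not rule out a kernel of $d\ev_\theta$. The correct argument is purely linearized and is the content of Lemma~\ref{lemma:evalInfty}: a nontrivial $v\in\ker\mathbf{D}_u^N$ satisfies \eqref{eqn:cNZ}, $Z(v)+Z_\infty(v)=c_N(u)=0$, with both terms nonnegative, so $Z_\infty(v)=0$; this forces the leading eigenfunction of $v$ to have the extremal winding $\wind^\Phi(0)$ and hence to be a nonzero element of $\ker\mathbf{A}_{x_u}$ or of $E_{x_u}$, which is precisely the nonvanishing of $d\ev_\theta(u)v$. (Also a minor slip: for a positive puncture the decay is governed by the \emph{largest negative} eigenvalue of $\mathbf{A}_{x_u}$, not the smallest positive one.) With the local step replaced by an appeal to Lemma~\ref{lemma:evalInfty}, the rest of your proof goes through.
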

\begin{proof}
They are local diffeomorphisms due to Lemma~\ref{lemma:evalInfty}.  
The claim thus reduces to the fact that any local diffeomorphism with 
compact support on a cylinder $\RR\times S^1$ is a global diffeomorphism.
\end{proof}

By the lemma, we can compose \eqref{eqn:evComposition} with the canonical
identifications $\mM_j^0 = \RR\times S^1$ and define diffeomorphisms
\begin{equation*}
\begin{split}
\varphi_\theta : \mM_1 \to \RR\times S^1 &
\text{ for $\theta = 0,1/2$,}\\
\varphi_\theta : \mM_2 \to \RR\times S^1 &
\text{ for $\theta = 1/4,3/4$,}
\end{split}
\end{equation*}
so that the resulting compositions $\varphi_0 \circ \varphi_{1/2}^{-1}$
and $\varphi_{1/4} \circ \varphi_{3/4}^{-1}$ are diffeomorphisms of
$\RR\times S^1$ with compact support.  Choose $c > 0$ sufficiently large
so that both of these are supported in $[-c,c] \times S^1$ and (making $R$
larger if necessary) $2c = e^R$.  Now, recalling the cutoff function 
$\beta$ from \S\ref{subsec:SteinModels}, set $\sigma = (c,k_1,k_2)$
where $k_1,k_2$ are the unique integers such that
there is an isotopy $\{\psi_t^1 \in \Diff(\RR\times S^1) \}_{t
\in [0,1]}$ supported in $[-c,c] \times S^1$, with $\psi_0^1 =
\varphi_0 \circ \varphi_{1/2}^{-1}$ and
$$
\psi_1^1(\rho,\eta) = (\rho, \eta + k_2 \beta(\rho / c)),
$$
and similarly there is an isotopy $\psi_t^2$ from $\varphi_{1/4} \circ
\varphi_{3/4}^{-1}$ to
$$
\psi_1^2(\rho,\eta) = (\rho, \eta + k_1 \beta(\rho / c)).
$$
From now on, we will use the diffeomorphisms $\varphi_{1/2}$ and
$\varphi_{3/4}$ to parametrize $\mM_1$ and $\mM_2$ respectively, denoting
$$
u_1^{(\rho,\eta)} := \varphi_{1/2}^{-1}(\rho,\eta),
\qquad
u_2^{(\rho,\eta)} := \varphi_{3/4}^{-1}(\rho,\eta).
$$
The point of this convention is that $u_1^{(\rho,\eta)} \in \mM_1$ now
approaches the Morse-Bott family $\{\theta = 1/2\}$ at the same orbit and
along the same asymptotic eigenfunction as $Z_1^{(\rho,\eta)} \in
\mM_1^\sigma$, and a corresponding statement holds
for $\mM_2$ and $\mM_2^\sigma$.

\begin{lemma}
\label{lemma:workhorse}
There exist constants $R_2 > R_1 > R$, an almost complex structure
$\hat{J}$ on $W^\infty$ tamed by $\omega$, and moduli spaces
$\widehat{\mM}_1$ and $\widehat{\mM}_2$ of embedded finite energy
$\hat{J}$--holomorphic cylinders foliating $W^\infty$, which have the
following properties.  For $j \in \{1,2\}$, 
$\widehat{\mM}_j$ can be parametrized by a cylinder
$$
\RR\times S^1 \ni (\rho,\eta) \mapsto \hat{u}_j^{(\rho,\eta)} \in
\widehat{\mM}_j
$$
such that
\begin{enumerate}
\item
In the region $W^R \cup ([R,R_1] \times T^3)$, $\hat{J} \equiv J$ and
$\hat{u}_j^{(\rho,\eta)}$ is identical to $u_j^{(\rho,\eta)} \in \mM_j$.
\item
In $[R_2,\infty) \times T^3$, $\hat{J} \equiv J_\sigma$ and
$\hat{u}_j^{(\rho,\eta)}$ is identical to
$Z_j^{(\rho,\eta)} \in \mM_j^\sigma$, where we use the natural identification
of the ends of $W^\infty$ and $W_\sigma$.
\item
Lemma~\ref{lemma:oneIntersection} holds also for the spaces
$\widehat{\mM}_1$ and $\widehat{\mM}_2$.
\end{enumerate}
\end{lemma}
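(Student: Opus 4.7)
The lemma is producing a ``hybrid'' of the $J$--holomorphic foliations $\mM_j$ on $W^\infty$ and the surgery--model foliations $\mM_j^\sigma$ on $W_\sigma$, grafted together within the cylindrical end. The key observation is that, by the way $\sigma = (c,k_1,k_2)$ was chosen, the asymptotic parametrizations $\varphi_0 \circ \varphi_{1/2}^{-1}$ and $\varphi_{1/4} \circ \varphi_{3/4}^{-1}$ agree with the surgery-shift maps $\psi_1^j$ up to the compactly supported isotopies $\psi_t^j$. This isotopy encodes the entire geometric discrepancy between the cylinders $u_j^{(\rho,\eta)}$ and $Z_j^{(\rho,\eta)}$ near their $\theta = 0$ and $\theta = 1/4$ ends. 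The strategy is to realize the isotopies geometrically via a compactly supported deformation in a single cylindrical slab $[R_1, R_2] \times T^3$, and then to find an $\omega$--tame almost complex structure $\hat{J}$ for which the deformed cylinders are holomorphic.

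Concretely, I would first choose $R_1$ large enough that each $u_j^{(\rho,\eta)}\cap\bigl([R_1,\infty)\times T^3\bigr)$ is already exponentially $C^1$--close to its asymptotic orbit cylinder, and then choose $R_2$ large enough that $[R_2,\infty)\times T^3$ corresponds, under the natural identification of cylindrical ends, to the region of $W_\sigma$ on which $\psi_\sigma$ is the identity and $Z_j^{(\rho,\eta)}$ restricts to a standard model cylinder in $(T^2\times\RR^2,J_0)$. In this far region both $u_j^{(\rho,\eta)}$ and $Z_j^{(\rho,\eta)}$ are model cylinders differing only in the asymptotic $q_2$-- or $q_1$--coordinate of their $\theta = 0$ or $\theta = 1/4$ end; the discrepancy is $\psi_0^j\circ(\psi_1^j)^{-1}$, which is compactly supported in $\RR\times S^1$ and isotopic to the identity by $\psi_t^j\circ(\psi_1^j)^{-1}$. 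Using this isotopy together with a smooth cutoff in the $a$--variable, I would build an $a$--dependent family of compactly supported diffeomorphisms of neighborhoods of $\{\theta=0\}$ and $\{\theta=1/4\}$ that is the identity for $a\leq R_1$ and interpolates to the surgery-shift by $a=R_2$; applying this to the near-asymptotic portions of the $u_j^{(\rho,\eta)}$ yields smooth embedded cylinders $\hat u_j^{(\rho,\eta)}$ that agree with $u_j^{(\rho,\eta)}$ on $W^R\cup([R,R_1]\times T^3)$ and with $Z_j^{(\rho,\eta)}$ on $[R_2,\infty)\times T^3$. The almost complex structure $\hat J$ is defined to equal $J$ on $W^R\cup([R,R_1]\times T^3)$ and $J_\sigma$ on $[R_2,\infty)\times T^3$, and on the interpolation slab it is chosen to preserve the tangent planes of both foliations $\widehat\mM_1$ and $\widehat\mM_2$ (which are transverse symplectic $2$--planes at every point); after an $\omega$--small perturbation if necessary, $\hat J$ becomes $\omega$--tame.

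The main obstacle is keeping the interpolated cylinders symplectic and the two families mutually transverse with exactly one intersection point. For symplecticness, one exploits that $\psi_t^j$ is $C^0$--small and compactly supported: by taking $R_2 - R_1$ sufficiently large, the $a$--slope of the interpolation can be made arbitrarily small, so $\omega$ stays positive on the tangent planes of $\hat u_j^{(\rho,\eta)}$. For the transversality and intersection count required by condition~(3), Lemma~\ref{lemma:oneIntersection} already gives the desired behavior on $W^R\cup([R,R_1]\times T^3)$, and Lemma~\ref{lemma:skewedFoliation} gives it on $[R_2,\infty)\times T^3$ for the surgery model. Inside the interpolation slab, the deformation is essentially a diffeomorphism acting on one factor of the (nearly product) foliation structure while fixing the other, so no new intersections are introduced and transversality is preserved. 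Assembling these three steps yields the required $\hat J$, $\widehat\mM_1$, and $\widehat\mM_2$.
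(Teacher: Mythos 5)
Your proposal follows essentially the same strategy as the paper's proof: choose $R_1$ large enough that the cylinders $u_j^{(\rho,\eta)}$ are $C^1$-close to orbit cylinders in $[R_1,\infty)\times T^3$, interpolate within a slab $[R_1,R_2]\times T^3$ using the compactly supported isotopies $\psi_t^j$ at a slow rate (controlled by taking $R_2 - R_1$ large), and then choose $\hat{J}$ to preserve the tangent planes of the deformed foliations so the cylinders become $\hat{J}$-holomorphic with $\hat{J}$ tamed by $\omega$. The verification of the intersection count is handled the same way, by noting that the two families are modified in disjoint regions.

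There is, however, a genuine (if small) gap in your second paragraph. You assert that in the far region both $u_j^{(\rho,\eta)}$ and $Z_j^{(\rho,\eta)}$ ``are model cylinders differing only in the asymptotic $q_2$- or $q_1$-coordinate,'' but that is only true for $|\rho|\ge c$. For $|\rho| < c$, the curve $u_j^{(\rho,\eta)}$ is \emph{not} an exact model cylinder anywhere in $[R_1,\infty)\times T^3$; it merely satisfies the asymptotic formula \eqref{eqn:asympFormula} with an exponentially decaying but generically nonzero remainder $r_u(s,t)$. Consequently, applying an $a$-dependent ambient diffeomorphism realizing $\psi_t^j$ produces curves that agree with a parameter-shifted version of $u_j^{(\rho,\eta)}$, not with the exact cylinder $Z_j^{(\rho,\eta)}$, so your claimed agreement on $[R_2,\infty)\times T^3$ does not follow. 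The paper's proof repairs precisely this point by inserting an intermediate level $R_1 < R' < R_2$: on $[R_1,R']\times T^3$ one first gradually interpolates the remainder term in \eqref{eqn:asympFormula} to zero, producing exact model cylinders $Z_j^{(\rho',\eta')}\in\mM_j^0$, and only then shifts the parameters $(\rho',\eta')\mapsto(\rho,\eta)$ via the isotopies on $[R',R_2]\times T^3$. Adding that preliminary interpolation step would make your argument complete; it requires only the same ``small slope implies $\omega$-tame'' control you already invoke.
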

\begin{proof}
The curves $u_j^{(\rho,\eta)}$ already have the desired properties when
$|\rho| \ge c$, so changes are needed only on compact subsets of
$\mM_j$, and only near the ends of these curves.  
The idea is simply to modify the foliation defined by
$\{ u_j^{(\rho,\eta)} \}_{(\rho,\eta) \in [-c,c]\times S^1}$ 
outside of a large compact subset to a new foliation of the
same region such that the change to the tangent spaces is uniformly small.
One can then make the new foliation $\hat{J}$--holomorphic for some
$\hat{J}$ that is uniformly close to $J$ and therefore also tamed by
$\omega$.  Lemma~\ref{lemma:oneIntersection} is trivial to verify
for the modified foliations, because adjustments to $\mM_1$ happen only
in a region where $\mM_2$ is unchanged, and vice versa.
We proceed in two steps.

Choose $R_1 > 0$ sufficiently large so that for $|\rho| \le c$, the tangent
spaces of the curves $u_j^{(\rho,\eta)}$ in $[R_1,\infty) \times T^3$ 
are uniformly close to the
tangent spaces of the asymptotic orbit cylinders.  Then choosing
$R'$ much larger than $R_1$, a sufficiently gradual adjustment
of the remainder
term in the asymptotic formula \eqref{eqn:asympFormula} produces a new
surface $\hat{u}_j^{(\rho,\eta)}$ in $[R_1,R'] \times T^3$ that looks like 
$u_j^{(\rho,\eta)}$ near $\{ R_1 \} \times T^3$ and
$Z_j^{(\rho',\eta')} \in \mM_j^0$ near $\{ R' \} \times T^3$, where
$(\rho',\eta')$ is related to $(\rho,\eta)$ via the diffeomorphism
$\varphi_0 \circ \varphi_{1/2}^{-1}$ or
$\varphi_{1/4} \circ \varphi_{3/4}^{-1}$.

It remains to adjust the parameters $(\rho',\eta')$ so that
in $[R_2,\infty) \times T^3$ for some $R_2 > R'$, $\hat{u}_j^{(\rho,\eta)}$
matches $Z_j^{(\rho,\eta)} \in \mM_j^\sigma$.
For this we use the isotopies $\psi^j_t$, defining the surface
$\hat{u}_j^{(\rho,\eta)}$ so that its intersection with
$\{s\} \times T^3$ for $s \in [R',R_2]$ matches
$Z_j^{\psi^j_{f(t)}(\rho,\eta)} \in \mM_j^0$ for some function 
$f : [R',R_2] \to [0,1]$ with sufficiently small derivative.
(Of course, $R_2$ must be large).
\end{proof}

We can now carry out the \red{deformation} argument.

\begin{prop}
\label{prop:Moser}
There exists a \red{diffeomorphism $\psi : W_\sigma \to W^\infty$
which restricts to the identity on $[R_2,\infty) \times T^3$, such that
the $2$--forms 
$$
\omega_{(t)} := t \psi^*\omega + (1-t) \omega_\sigma
$$
are symplectic for all $t \in [0,1]$.}
\end{prop}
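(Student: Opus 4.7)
The strategy is to construct $\psi$ from the two pairs of transverse holomorphic foliations, and then verify that the linear interpolation $\omega_{(t)}$ is nondegenerate by exploiting the fact that for $\omega_\sigma$, the tangent planes of the $\mM_j^\sigma$-foliations are pointwise symplectic complements.

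First I would build $\psi$ as follows. By Lemmas~\ref{lemma:skewedFoliation} and~\ref{lemma:oneIntersection}, each point of $W_\sigma$ is the unique transverse intersection $Z_1^{(\rho_1,\eta_1)} \cap Z_2^{(\rho_2,\eta_2)}$ for a unique pair $(\rho_1,\eta_1), (\rho_2,\eta_2)$, and each point of $W^\infty$ is the unique transverse intersection $\hat{u}_1^{(\rho_1,\eta_1)} \cap \hat{u}_2^{(\rho_2,\eta_2)}$. Using the common parametrizations, the assignment
\[
\psi : Z_1^{(\rho_1,\eta_1)} \cap Z_2^{(\rho_2,\eta_2)} \;\longmapsto\; \hat{u}_1^{(\rho_1,\eta_1)} \cap \hat{u}_2^{(\rho_2,\eta_2)}
\]
is a well defined diffeomorphism, smooth because the two pairs of projections to the moduli spaces are smooth fibrations. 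Part~(2) of Lemma~\ref{lemma:workhorse} says $\hat{u}_j^{(\rho,\eta)}$ coincides with $Z_j^{(\rho,\eta)}$ over $[R_2,\infty) \times T^3$ under the natural identification of the cylindrical ends, so $\psi$ restricts to the identity there. Moreover the restrictions $\psi|_{Z_j^{(\rho,\eta)}}$ are diffeomorphisms between $J_\sigma$- and $\hat{J}$-complex curves, coming from the intertwining of the two fibrations, and by construction preserve the complex orientations on each leaf; hence $\psi$ is orientation-preserving.

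For the nondegeneracy check, fix $p \in W_\sigma$, let $V_j = T_p Z_j^{(\rho_j,\eta_j)}$ for the two leaves through $p$, and pick complex-oriented bases $(e_1,e_2)$ of $V_1$ and $(e_3,e_4)$ of $V_2$. Lemma~\ref{lemma:skewedFoliation} says $V_1$ and $V_2$ are $\omega_\sigma$-symplectic complements, so $\omega_\sigma = a_{12}\, e^1 \wedge e^2 + a_{34}\, e^3 \wedge e^4$ with $a_{12}, a_{34} > 0$. Writing $\psi^*\omega = \sum_{i<j} b_{ij}\, e^i \wedge e^j$, all cross-term contributions in $\psi^*\omega \wedge \omega_\sigma$ vanish algebraically, yielding
\[
\psi^*\omega \wedge \omega_\sigma \;=\; (a_{12} b_{34} + a_{34} b_{12})\, e^1 \wedge e^2 \wedge e^3 \wedge e^4.
\]
Now $b_{12} = \omega(\psi_* e_1, \psi_* e_2)$, where $(\psi_* e_1, \psi_* e_2)$ is a complex-oriented basis of $T_{\psi(p)} \hat{u}_1^{(\rho_1,\eta_1)}$ by the previous paragraph; since $\omega$ tames $\hat{J}$ by Lemma~\ref{lemma:workhorse}, it follows that $b_{12} > 0$, and likewise $b_{34} > 0$. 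So $\psi^*\omega \wedge \omega_\sigma$ is pointwise a positive multiple of the (common) complex volume form, as are $\omega_\sigma \wedge \omega_\sigma$ and $\psi^*\omega \wedge \psi^*\omega$. Expanding
\[
\omega_{(t)} \wedge \omega_{(t)} = t^2\, \psi^*\omega \wedge \psi^*\omega + 2t(1-t)\, \psi^*\omega \wedge \omega_\sigma + (1-t)^2\, \omega_\sigma \wedge \omega_\sigma,
\]
every summand is a nonnegative multiple of the same fixed volume form, and for any $t \in [0,1]$ at least one coefficient is strictly positive, so $\omega_{(t)}$ is nondegenerate. Closedness follows from closedness of $\omega$ and $\omega_\sigma$.

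The main subtlety I expect is the orientation bookkeeping used to conclude $b_{12}, b_{34} > 0$: one must argue that the $(\rho,\eta)$-identification of the moduli spaces on $W_\sigma$ and on $W^\infty$ preserves the leaf orientations inherited from $J_\sigma$ and $\hat{J}$ respectively. This is forced by the fact that in both cases the complex structure on the moduli space and on a fiber together reproduce the complex orientation on the total space, and by Lemma~\ref{lemma:workhorse} the two parametrizations genuinely agree on the end. Once this is settled, the proposition reduces to the algebraic observation in the second paragraph, which depends only on the $\omega_\sigma$-orthogonality of the two leaf tangent planes and on $\omega$ taming $\hat{J}$.
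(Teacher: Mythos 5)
Your proposal is correct and takes essentially the same route as the paper: $\psi$ is built from the intersection points of the two pairs of transverse holomorphic foliations (equal to the identity on the end by Lemma~\ref{lemma:workhorse}), and nondegeneracy of $\omega_{(t)}$ comes from the splitting of each tangent space into $\omega_\sigma$--symplectic complements that are tangent to $\hat{J}$--holomorphic leaves. The paper phrases the last step as ``$\psi^*\hat{J}$ tames both $\omega_\sigma$ and $\psi^*\omega$, hence every convex combination,'' which is precisely what your explicit wedge-product computation (including the orientation bookkeeping, settled by the fact that $\psi$ restricts to the identity somewhere on every connected leaf) unpacks.
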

\begin{proof}
Applying Lemma~\ref{lemma:oneIntersection} to the spaces
$\widehat{\mM}_1$ and $\widehat{\mM}_2$ and using the given identifications
of both with $\RR\times S^1$, we have a diffeomorphism
$$
\widehat{\Pi}_1 \times \widehat{\Pi}_2 : W^\infty \to 
\widehat{\mM}_1 \times \widehat{\mM}_2 = (\RR\times S^1) \times 
(\RR\times S^1),
$$
and there is a similar diffeomorphism
$$
\Pi_1^\sigma \times \Pi_2^\sigma : W_\sigma \to \mM_1^\sigma \times 
\mM_2^\sigma = (\RR\times S^1) \times (\RR\times S^1).
$$
Composing the second with the inverse of the first yields a diffeomorphism
$\psi : W_\sigma \to W^\infty$
which equals the identity in $[R_2,\infty) \times T^3$.
We claim that $\omega_{(t)} = t \psi^*\omega + (1-t) \omega_\sigma$ is 
nondegenerate, and
thus symplectic for every $t \in [0,1]$.  Indeed, the almost complex
structure $\psi^*\hat{J}$ tames $\omega_{(1)} = \psi^*\omega$, and it
also tames $\omega_{(0)} = \omega_\sigma$ since every tangent space now
splits into a sum of $\omega_\sigma$--symplectic complements that are
also $\psi^*\hat{J}$--invariant.  Thus $\psi^*\hat{J}$ is also
tamed by $\omega_{(t)}$ for every $t \in [0,1]$, proving the claim.
\end{proof}

\red{
\begin{prop}
\label{prop:Moser2}
If $(W,\omega)$ is an exact filling, then one can arrange the diffeomorphism 
of Prop.~\ref{prop:Moser} to be a symplectomorphism
$(W_\sigma,\omega_\sigma) \to (W^\infty,\omega)$.
\end{prop}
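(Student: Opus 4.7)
The plan is to apply Moser's trick to the family $\omega_{(t)} = t\psi^*\omega + (1-t)\omega_\sigma$ produced by Proposition~\ref{prop:Moser}, exploiting exactness to obtain a primitive of $\psi^*\omega - \omega_\sigma$ that vanishes on the cylindrical end $[R_2,\infty) \times T^3$. The diffeomorphism $\psi$ already has the right behaviour near infinity; what is needed is to correct it in the interior by the time-$1$ map of a compactly supported isotopy, after which the composition becomes a genuine symplectomorphism without disturbing the structure at infinity.

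First I would produce the required primitive. By hypothesis $(W,\omega)$ is exact and the primitive $\lambda$ has been extended over $W^\infty$ so that $\lambda|_{[R,\infty) \times T^3} = e^a\alpha_0$. On $W_\sigma$, Lemma~\ref{lemma:globalPrimitive} supplies a primitive $\lambda_\sigma$ of $\omega_\sigma$ with $\lambda_\sigma|_{[R,\infty) \times T^3} = e^a\alpha_0$ as well. Setting
\[
\beta := \psi^*\lambda - \lambda_\sigma \in \Omega^1(W_\sigma),
\]
one has $d\beta = \psi^*\omega - \omega_\sigma$, and because $\psi$ restricts to the natural identification on $[R_2,\infty) \times T^3$, the two primitives there agree, so $\beta$ vanishes identically on $[R_2,\infty) \times T^3$. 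The complement of this end in $W_\sigma$ is relatively compact, hence $\beta$ has compact support.

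Next I would run Moser's argument on $W_\sigma$. Define the time-dependent vector field $X_t$ by $\iota_{X_t}\omega_{(t)} = -\beta$, which is well defined since $\omega_{(t)}$ is symplectic for all $t \in [0,1]$ by Proposition~\ref{prop:Moser}. Because $\beta$ has compact support, so does $X_t$, and its flow $\phi_t$ is defined for all time and is the identity on $[R_2,\infty) \times T^3$. The standard computation gives
\[
\frac{d}{dt}\,\phi_t^*\omega_{(t)} = \phi_t^*\bigl(\mathcal{L}_{X_t}\omega_{(t)} + \tfrac{d}{dt}\omega_{(t)}\bigr) = \phi_t^*\bigl(d\iota_{X_t}\omega_{(t)} + d\beta\bigr) = 0,
\]
so $\phi_1^*\psi^*\omega = \omega_\sigma$. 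Thus $\psi \circ \phi_1 : W_\sigma \to W^\infty$ is a symplectomorphism, and since both $\psi$ and $\phi_1$ are the identity on $[R_2,\infty) \times T^3$, so is the composition.

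The only real subtlety is the matching of primitives near infinity, which is precisely where exactness enters: without it one could only arrange $\psi^*\omega - \omega_\sigma = d\beta$ for a $\beta$ that is closed (rather than identically zero) near infinity, and there is no reason for its cohomology class to vanish on the noncompact end. Thus the main obstacle in the non-exact case disappears here, and the remaining issues — nondegeneracy along the Moser path, completeness of the flow, and preservation of the identification at infinity — are handled directly by Proposition~\ref{prop:Moser} and the compact support of $\beta$.
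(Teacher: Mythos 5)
Your argument is correct and coincides with the paper's own proof: both apply the Moser trick to the path $\omega_{(t)}$ from Proposition~\ref{prop:Moser}, using the primitive $\lambda_\sigma - \psi^*\lambda$ (your $-\beta$), which vanishes on $[R_2,\infty)\times T^3$ because both primitives equal $e^a\alpha_0$ there, so the Moser vector field is compactly supported and its time-$1$ flow composed with $\psi$ gives the symplectomorphism. No substantive differences.
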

\begin{proof}
Let $\psi : W_\sigma \to W^\infty$ be the diffeomorphism constructed in
Prop.~\ref{prop:Moser}.
By Lemma~\ref{lemma:globalPrimitive}, there is a $1$--form $\lambda_\sigma$
on $W_\sigma$ that satisfies $d\lambda_\sigma = \omega_\sigma$ globally
and matches $\lambda = \psi^*\lambda = e^a\alpha_0$ 
on $[R_2,\infty) \times T^3$.
Now $\omega_{(t)} = d\lambda_{(t)}$, where
$\lambda_{(t)} = t \psi^*\lambda + (1-t) \lambda_\sigma$.
Define a time-dependent vector field $V_t$ on $W_\sigma$ by
$$
\omega_{(t)}(V_t,\cdot) = \lambda_\sigma - \psi^*\lambda.
$$
Since $\lambda_\sigma - 
\psi^*\lambda$ vanishes in $[R_2,\infty) \times T^3$, the flow
$\varphi^t_V$ of $V_t$ has compact support and
is well defined for all~$t$: the map
$$
\psi \circ \varphi^1_V : W_\sigma \to W^\infty
$$
then gives the desired symplectomorphism $(W_\sigma,\omega_\sigma) \to
(W^\infty,\omega)$.
\end{proof}
}

\begin{proof}[Proof of Theorem~\ref{thm:SteinT3}]
\red{By Prop.~\ref{prop:Moser}, $(W,\omega)$ is symplectically deformation
equivalent to an exact filling, so let us assume from now on that it is
exact.  Then by} Prop.~\ref{prop:Moser2}, there is
a symplectomorphism $\psi : (W^\infty,\omega) \to (W_\sigma,\omega_\sigma)$ 
which equals the identity in $[R,\infty) \times T^3$ for sufficiently
large $R$, and we shall now use it to construct a symplectomorphism of 
$(W,\omega)$ to
a star shaped domain in $T^*T^2$.  \red{Choose a global primitive $\lambda$
of $\omega$ which matches $e^a\alpha_0$ on $[R,\infty) \times T^3$ and
denote by $Y$ and $Y_\sigma$ the Liouville vector fields corresponding
to $\lambda$ and $\lambda_\sigma$ respectively, so}
$$
\omega(Y,\cdot) = \lambda,
\qquad
\omega_\sigma(Y_\sigma,\cdot) = \lambda_\sigma.
$$
Both of these match $\p_a$ on $[R,\infty) \times T^3$.
There is also another Liouville vector field $Y_0$ on $W_\sigma$ defined
by $\omega_\sigma(Y_0,\cdot) = P_1\ dQ_1 + P_2\ dQ_2$, thus
$$
Y_0 = P_1 \ \p_{P_1} + P_2 \ \p_{P_2},
$$
and by the construction of $\lambda_\sigma$, $Y_0 = Y_\sigma$ on $K_c$.
All of these have globally defined flows which dilate the respective
symplectic forms, e.g.~$(\varphi^t_Y)^*\omega = e^t \omega$ 
for all $t \in \RR$.

By the construction of $W^\infty$, there is a smooth function $f : T^3 \to \RR$
such that the closure of $(W^\infty \setminus W,\omega)$ is the trivial 
symplectic cobordism $(\sS_f^\infty,d(e^a\alpha_0))$, and $Y = \p_a$ on this
region.  Now choose $T > 0$ sufficiently large so that
$$
\varphi^T_Y(\p W) \subset [R,\infty) \times T^3,
$$
thus $\varphi^T$ gives a symplectomorphism
$(W,\omega) \to (\varphi^T_Y(W),e^{-T}\omega)$.  Then $\psi \circ \varphi^T_Y$ 
maps $(W,\omega)$ symplectomorphically to the domain in
$(W_\sigma,e^{-T}\omega_\sigma)$ bounded by
$\p \sS_{-\infty}^{f + T} \subset [R,\infty) \times \red{T}^3$,
which is transverse to $Y_\sigma$.  The composition
$$
\psi_T := \varphi^{-T}_{Y_\sigma} \circ \psi \circ \varphi^T_Y :
(W^\infty,\omega) \to (W_\sigma,\omega_\sigma)
$$
now maps $W$ to a compact domain in $W_\sigma$ with boundary transverse 
to~$Y_\sigma$.

Recall next from the proof of Lemma~\ref{lemma:globalPrimitive} that
$\lambda_\sigma = P_1\ dQ_1 + P_2\ dQ_2 + d\widehat{\Phi}$ for some
smooth function $\widehat{\Phi} : W_\sigma \to \RR$ that vanishes in
$K_c$, and we can assume without loss of generality that 
$\Phi(Q_1,Q_2,P_1,P_2)$ depends only on $P_1$ and~$P_2$.  It follows that
$$
Y_\sigma = Y_0 + \widehat{Y}
$$
for some vector field $\widehat{Y}$ that vanishes in $K_c$ and has
components only in the $Q_1$ and $Q_2$--directions.  We can therefore
choose $\tau > 0$ sufficiently large so that 
$\varphi^{-\tau}_{Y_\sigma}$ maps $\psi_T(W)$ into $K_c$ and then
$$
\varphi^{\tau}_{Y_0} \circ \varphi^{-\tau}_{Y_\sigma} 
\red{ : (W_\sigma,\omega_\sigma) \to (W_\sigma,\omega_\sigma) }
$$
is a symplectomorphism that maps
$\psi_T(W)$ to a compact domain with boundary transverse to~$Y_0$.
Under the symplectomorphism $(W_\sigma,\omega_\sigma) \to 
(T^2 \times \RR^2,\omega_0)$ defined by the $(Q,P)$--coordinates, 
this becomes a star shaped domain.  \red{Since all such domains can be
deformed symplectically to the standard filling $(T^2 \times \DD,\omega_0)$,
the uniqueness of strong fillings follows.}
\end{proof}

\subsection{Symplectomorphism groups}
\label{subsec:sympT3}

We now prove Theorem~\ref{thm:sympT3}: \red{observe that by the Whitehead
theorem, it suffices to prove that $\Symp_c(T^*T^2,\omega_0)$ 
is \emph{weakly} contractible,
i.e.~$\pi_n(\Symp_c(T^*T^2,\omega_0)) = 0$ for every $n \ge 0$.}
The main idea of the argument goes back to Gromov \cite{Gromov} in the
closed case, and was also used by Hind \cite{Hind:Lens} in a situation
analogous to ours (fillings of Lens spaces).
The key is to construct a family of foliations by
$J$--holomorphic cylinders for $J$ varying in a ball whose boundary
is determined by a given map $S^n \to \Symp_c(T^*T^2)$.  Here it is
crucial to note that since \red{$\omega_0$ is exact and} 
the closed Reeb orbits in $T^3 = T^2 \times \p \DD$ 
are never contractible in $T^2 \times \DD$, \red{there cannot exist any
closed or $1$--punctured $J$--holomorphic spheres, hence
the moduli spaces we construct have no nodal degenerations.
In this situation,} Theorems~\ref{thm:compactness} and
\ref{thm:compactnessHomotopy} go through without any genericity assumption
for~$J$ (see Remark~\ref{remark:noGenericity}).

As in \S\ref{subsec:SteinModels}, choose an almost complex structure
$J_0$ which matches the standard complex structure near the zero section
and belongs to $\jJ_{\alpha_0}(T^3)$ on the cylindrical end
$[0,\infty) \times T^3$, where it matches the form used in 
Example~\ref{ex:torsion}.  Let $\lambda_0$ denote the canonical
$1$--form on $T^*T^2$, so $d\lambda_0 = \omega_0$.

Suppose now that
$$
S^n \to \Symp_c(T^*T^2,\omega_0) : x \mapsto \psi_x
$$
is a smooth family of symplectomorphisms which all equal the identity
on $[R,\infty) \times T^3$ for some $R \ge 0$, and there is a fixed base
point $x_0 \in S^n$ such that $\psi_{x_0} = \Id$.  
Let $J_x = \psi_x^*J_0$ for each $x \in S^n$: these are all
$\omega_0$--compatible almost complex structures that match $J_0$ on
$[R,\infty)$.  Now using the contractibility of the space of
compatible almost complex structures, the family $\{ J_x \}_{x \in S^n}$
can be filled in to a smooth family $\{ J_x \}_{x \in B^{n+1}}$
that are all compatible with $\omega_0$ and 
equal $J_0$ on $[R,\infty) \times T^3$,
where $B^{n}$ denotes the closed unit ball in~$\RR^n$.

Applying Theorem~\ref{thm:compactnessHomotopy} (with 
Remark~\ref{remark:noGenericity} in mind), 
there are now two unique smooth families of
moduli spaces $\mM_1^x$ and $\mM_2^x$ for $x \in B^{n+1}$, each of which 
consists of embedded $J_x$--holomorphic cylinders foliating $T^*T^2$,
such that each curve in $\mM_1^x$ has one transverse intersection with
each curve in $\mM_2^x$.  We have $J_{x_0} = J_0$, thus the curves
in $\mM_1^{x_0}$ and $\mM_2^{x_0}$ are precisely the cylinders that make
up the splitting
$$
T^*T^2 = T^2 \times \RR^2 = (\RR\times S^1) \times (\RR\times S^1),
$$
as was explained in \S\ref{subsec:SteinModels}.  More generally,
for $x \in \p B^{n+1}$ and $j \in \{1,2\}$, 
the curves in $\mM_j^x$ can be obtained by composing curves in
$\mM_j^{x_0}$ with the symplectomorphism~$\psi_x^{-1}$, and are thus identical
on $[R,\infty) \times T^3$ to the curves in $\mM_j^{x_0}$.
As in the previous
section, we can now use asymptotic evaluation maps to define diffeomorphisms
$$
\RR \times S^1 \to \mM_j^x : (\rho,\eta) \mapsto u_{j,x}^{(\rho,\eta)}.
$$
Arguing further as in Lemma~\ref{lemma:workhorse}, 
for $x \in B^{n+1} \setminus S^n$, change $J_x$ on a region near 
infinity to a smooth family $\hat{J}_x$
tamed by $\omega_0$ and matching $J_0$ on some region 
$[R_2,\infty) \times T^3$, such that for every fixed parameter
$(\rho,\eta)$, the curves $\hat{u}_{j,x}^{(\rho,\eta)}$
in the resulting moduli spaces
$\widehat{\mM}_j^x$ are identical on $[R_2,\infty) \times T^3$ for all 
$x \in B^{n+1}$.  Then the intersection points define a smooth family of
diffeomorphisms 
$$
\psi_x : T^*T^2 \to \widehat{\mM}_1^x \times \widehat{\mM}_2^x = 
(\RR\times S^1) \times (\RR \times S^1) = T^*T^2,
$$
which match the original family $\psi_x \in \Symp_0(T^*T^2,\omega_0)$
for $x \in \p B^{n+1}$ and all equal the identity on 
$[R_2,\infty) \times T^3$.  We have now a smooth family of symplectic
forms $\omega_x := \psi_x^*\omega_0$ which are all standard on
$[R_2,\infty) \times T^3$ and match $\omega_0$ globally for 
$x \in \p B^{n+1}$.

\begin{lemma}
There exists a smooth family of $1$--forms $\{\lambda_x\}_{x \in B^{n+1}}$ 
on $T^*T^2$ such that
\begin{enumerate}
\item
$d\lambda_x = \omega_x$,
\item
$\lambda_x \equiv \lambda_0$ for every $x \in \p B^{n+1}$,
\item
$\lambda_x = \lambda_0$ on $[R_2,\infty) \times T^3$ for every $x \in B^{n+1}$.
\end{enumerate}
\end{lemma}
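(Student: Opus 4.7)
My plan is to start with the natural candidate primitive $\tilde{\lambda}_x := \psi_x^*\lambda_0$ and then correct it by an exact, compactly supported form so as to enforce the boundary condition on $\partial B^{n+1}$. First I observe that $\tilde{\lambda}_x$ is smooth in $x$ and satisfies $d\tilde{\lambda}_x = \psi_x^*\omega_0 = \omega_x$ by construction, and since $\psi_x \equiv \Id$ on $[R_2,\infty) \times T^3$ for every $x \in B^{n+1}$, we get $\tilde{\lambda}_x \equiv \lambda_0$ on that region for free. Thus only condition~(ii), namely $\lambda_x \equiv \lambda_0$ on $\partial B^{n+1}$, requires additional work.

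For $x \in \partial B^{n+1}$, $\psi_x$ is a genuine symplectomorphism in $\Symp_c(T^*T^2,\omega_0)$, so $\mu_x := \tilde{\lambda}_x - \lambda_0$ is a closed, compactly supported $1$-form on $T^*T^2$, with support contained in a uniform compact set $K$ disjoint from $[R_2,\infty) \times T^3$. The key computation is that
$$
H^1_c(T^*T^2) \cong H_3(T^*T^2) \cong H_3(T^2) = 0
$$
by Poincar\'e--Lefschetz duality (since $T^*T^2$ deformation retracts to the $2$-dimensional torus), so each $\mu_x$ admits a compactly supported primitive $h_x \in C^\infty_c(T^*T^2)$; this primitive is unique because $T^*T^2$ is connected and noncompact, so any two choices would differ by a constant that must vanish at infinity. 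The smoothness of $x \mapsto h_x$ on $\partial B^{n+1}$ is then transparent by integrating $\mu_x$ from infinity along a fixed smooth family of rays in the fiber direction of $T^*T^2 = T^2 \times \RR^2$.

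I would then extend the family $\{h_x\}_{x \in \partial B^{n+1}}$ to a smooth family $\{H_x\}_{x \in B^{n+1}}$ of compactly supported functions on $T^*T^2$ whose supports stay uniformly inside $K$: this is routine using a collar neighborhood of $\partial B^{n+1}$, a smooth cutoff in the collar parameter, and the contractibility of the topological vector space $C^\infty_c(T^*T^2)$. Setting
$$
\lambda_x := \tilde{\lambda}_x - dH_x,
$$
we obtain the required family: clearly $d\lambda_x = \omega_x$ for every $x$; for $x \in \partial B^{n+1}$ we have $\lambda_x = \tilde{\lambda}_x - dh_x = \tilde{\lambda}_x - \mu_x = \lambda_0$; and since $\supp H_x \subset K$ is disjoint from $[R_2,\infty) \times T^3$, we get $dH_x \equiv 0$ and hence $\lambda_x \equiv \lambda_0$ on that region as well.

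The only nontrivial input is the cohomological vanishing $H^1_c(T^*T^2) = 0$ combined with the uniqueness of compactly supported primitives on a connected noncompact manifold; these together remove all choices in the definition of $h_x$ and make the parametric construction canonical and smooth, so I expect no serious analytic difficulty beyond careful bookkeeping of supports.
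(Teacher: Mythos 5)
Your proposal is correct and follows essentially the same route as the paper: take $\psi_x^*\lambda_0$ as the candidate primitive, observe that its difference from $\lambda_0$ is a closed compactly supported $1$--form that admits a (unique) compactly supported primitive for $x \in \p B^{n+1}$, and extend that family of functions over the ball with supports kept away from the cylindrical end. The only cosmetic difference is the justification of exactness — you invoke $H^1_c(T^*T^2)=0$ via duality, while the paper integrates over cycles pushed outside the support to see the class vanishes in ordinary $H^1_{DR}$ — and both arguments are valid.
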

\begin{proof}
For each $x \in \p B^{n+1}$, $\psi_x$ is a symplectomorphism and thus
$\lambda_0 - \psi_x^*\lambda_0$ is a closed $1$--form with compact support.
All such $1$--forms are exact: indeed, any element of $H_1(T^*T^2)$ can be
represented by a cycle $\gamma$ lying outside the support of
$\lambda_0 - \psi_x^*\lambda_0$, hence
$$
\int_\gamma \left( \lambda_0 - \psi_x^*\lambda_0 \right) = 0
\text{ for all $[\gamma] \in H_1(T^*T^2)$},
$$
implying $[\lambda_0 - \psi_x^*\lambda_0] = 0 \in H^1_{DR}(T^*T^2)$.
Then for $x \in \p B^{n+1}$ there is a unique smooth family of 
compactly supported functions $f_x : T^*T^2 \to \RR$ such that
$$
\lambda_0 = \psi_x^*\lambda_0 + d f_x.
$$
Extending $f_x$ to a smooth family of compactly supported functions for 
$x \in B^{n+1}$, the desired $1$--forms can be defined by
$\lambda_x = \psi_x^*\lambda_0 + d f_x$.
\end{proof}

Now given the $1$--forms $\lambda_x$ from the lemma, define for
$t \in [0,1]$,
$$
\lambda_x^{(t)} := t \lambda_x + (1-t) \lambda_0,
\qquad
\omega_x^{(t)} := d \lambda_x^{(t)}.
$$
The almost complex structure $\hat{J}_x$ is tamed by $\omega_0$, and using
the holomorphic foliations as in the proof of Theorem~\ref{thm:SteinT3},
we see that it is also tamed by $\omega_x = \psi_x^*\omega_0$, and thus by
all $\omega_x^{(t)}$ for $t \in [0,1]$, proving that the latter are symplectic.
Now define a smooth family of time-dependent vector fields $V_x^t$ by
$$
\omega_x^{(t)}(V_x^t,\cdot) = \lambda_0 - \lambda_x.
$$
These vanish identically when $x \in \p B^{n+1}$ and also vanish outside
of a compact set for all~$x$, thus the flows
$\varphi_{V_x}^t$ are well defined and compactly supported 
for all $t$, and trivial if $x \in \p B^{n+1}$.  Moreover,
$(\varphi_{V_x}^t)^*\omega_x^{(t)} = \omega_0$.  We thus obtain a smooth
family of compactly supported symplectomorphisms on $(T^*T^2,\omega_0)$
for $x \in B^{n+1}$ via the composition $\psi_x \circ \varphi_{V_x}^1$,
which matches $\psi_x$ for $x \in \p B^{n+1}$.  This shows that
$\pi_n(\Symp_c(T^*T^2,\omega_0)) = 0$ for all~$n$, and thus completes the
proof of Theorem~\ref{thm:sympT3}.

\appendix

\section{Fredholm and intersection theory}
\label{sec:Fredholm}

\subsection{Transversality}
\label{subsec:transversality}

In this appendix we recall some useful technical facts about finite
energy $J$--holomorphic curves.  Adopting the notation of 
\S\ref{sec:compactness},
$(W^\infty,\omega) = (W,\omega) \cup_{\p W} 
\left( [0,\infty) \times M,d(e^a\lambda) \right)$ is the union of
a compact symplectic manifold $(W,\omega)$ with contact boundary 
$\p W = M$ attached smoothly to the positive cylindrical end
$([0,\infty) \times M, d(e^a\lambda))$, where $\lambda$ is a Morse-Bott
contact form on~$M$, defining the contact structure $\xi = \ker\lambda$.
Let $J$ denote an $\omega$--compatible almost complex structure on $W^\infty$ 
which is in $\jJ_\lambda(M)$
at the positive end.  Then any nonconstant punctured $J$--holomorphic 
curve $u : (\dot{\Sigma},j) \to (W^\infty,J)$ with finite energy is asymptotic
at each puncture $z \in \Gamma$ to some periodic orbit of the
Reeb vector field $X_\lambda$,
for which we can choose a parametrization $x_z : S^1 \to M$ with
$\lambda(\dot{x}_z)$ identically equal to the period $T_z > 0$.
In order to describe the analytical invariants of $u$, it is convenient
to introduce the \emph{asymptotic operators}
$$
\mathbf{A}_z : \Gamma(x_z^*\xi) \to \Gamma(x_z^*\xi) :
v \mapsto - J (\nabla_t v - T_z \nabla_v X_\lambda),
$$
where $\nabla$ is any symmetric connection on~$M$.  Morally, this is the
Hessian of the contact action functional on $C^\infty(S^1,M)$, whose 
critical points are periodic orbits; in particular one can show that 
$\mathbf{A}_z$ has trivial kernel if and only if
the orbit $x_z$ is nondegenerate.  Choosing a unitary trivialization
$\Phi$ for $x_z^*\xi$, $\mathbf{A}_z$ becomes identified with the
operator
$$
C^\infty(S^1,\RR^2) \to C^\infty(S^1,\RR^2) : v \mapsto
-J_0 \dot{v} - S v
$$
where $S(t)$ for $t \in S^1$ is a smooth loop of symmetric $2$--by--$2$
matrices.  Then there is a linear Hamiltonian flow $\Psi(t) \in \Spp(1)$
defined by solutions to the equation $-J_0 \dot{v} - Sv = 0$, and
$1$ is in the spectrum of $\Psi(1)$ if and only if $\ker\mathbf{A}_z$ is
nontrivial.  When this is not the case, we define the \emph{Conley-Zehnder
index} $\muCZ^\Phi(\mathbf{A}_z)$ in the standard way in terms of this
path of symplectic matrices for $t \in [0,1]$ \red{(cf.~the discussion of the
``$\mu$--index'' in \cite{HWZ:props2}*{\S 3})}.  
Note that the index depends on $\Phi$ 
up to an even integer, so its even/odd
parity in particular is independent of~$\Phi$.
In the Morse-Bott context, $\mathbf{A}_z$ may have nontrivial kernel,
but one can generally pick a real number $\epsilon \ne 0$ and define
$\muCZ^\Phi(\mathbf{A}_z + \epsilon)$,
which depends only on the sign of $\epsilon$ if the latter is
sufficiently close to zero.

The \emph{Fredholm index} of $u$ can now be written as
\begin{equation}
\label{eqn:index}
\ind(u) = -\chi(\dot{\Sigma}) + 2 c_1^\Phi(u^*TW^\infty) + \sum_{z \in \Gamma}
\muCZ^\Phi(\mathbf{A}_z - \epsilon),
\end{equation}
where $\epsilon > 0$ is an arbitrary small number, and
$c_1^\Phi(u^*TW^\infty)$ is the \emph{relative first Chern number} of the
complex vector bundle $(u^*TW^\infty,J)$ with respect to the trivialization at
the ends defined by combining $\Phi$ on $\xi$ with the obvious trivialization
of $\RR \oplus \RR X_\lambda$.  It is straightforward to show from
properties of the Conley-Zehnder index and relative Chern number that this
sum doesn't depend on either $\epsilon$ or $\Phi$.  It defines the
\emph{virtual dimension} of the moduli space of $J$--holomorphic curves close
to~$u$.  We say that $u$ is \emph{unobstructed} whenever 
the linearized Cauchy-Riemann operator at $u$ is surjective: then the
moduli space close to~$u$ is a smooth orbifold (or manifold if $u$ is 
somewhere injective) of dimension $\ind(u)$.
In the case where all orbits are nondegenerate, this follows 
from the Fredholm theory developed in \cite{Dragnev}; see \cite{Wendl:thesis}
or \cite{Wendl:BP1} for the Morse-Bott case.

The punctures $\Gamma \subset \Sigma$ can be divided into \emph{even}
punctures $\Gamma_0$ and \emph{odd} punctures $\Gamma_1$ according to the
parity of $\muCZ^\Phi(\mathbf{A}_z - \epsilon)$, which is independent of
$\Phi$ and $\epsilon > 0$ as noted above.\footnote{Note that we're 
assuming all punctures are
positive here; if there were negative Morse-Bott punctures, both this
definition of parity and the Fredholm index formula would need
$\mathbf{A}_z + \epsilon$ instead of $\mathbf{A}_z - \epsilon$.}
Now one can easily use the index formula to show that $\ind(u)$ and $\Gamma_0$
are either both even or both odd, so if $\Sigma$ has genus $g$,
there is an integer $c_N(u) \in \ZZ$ defined by the formula
\begin{equation}
\label{eqn:cNindex}
2 c_N(u) = \ind(u) - 2 + 2g + \#\Gamma_0.
\end{equation}
We call this the \emph{normal Chern number} of $u$, for reasons that are
easy to see in the case where $W$ is a closed manifold: then the
combination of \eqref{eqn:index} and \eqref{eqn:cNindex} yields the
alternative definition $c_N(u) = c_1(u^*TW) - \chi(\Sigma)$, which is
precisely the first Chern number of the normal bundle whenever $u$ is
immersed.  As shown in \cite{Wendl:automatic}, this is also the appropriate
interpretation of $c_N(u)$ in the punctured case.  The following 
transversality criterion is a special case of a result proved in
\cite{Wendl:automatic}:

\begin{prop}
\label{prop:automatic}
If $u : \dot{\Sigma} \to W^\infty$ is an immersed finite energy $J$--holomorphic
curve with $\ind(u) > c_N(u)$, then $u$ is unobstructed.
\end{prop}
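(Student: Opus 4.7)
The plan is to prove this via Wendl's automatic transversality technique \cite{Wendl:automatic}, which reduces the problem to analysing a Cauchy--Riemann-type operator on the normal bundle and combines a Riemann--Roch style count with the Carleman similarity principle. Since $u$ is immersed, the pullback bundle splits as $u^*TW^\infty = T\dot\Sigma \oplus N_u$ into complex line subbundles, and the linearized Cauchy--Riemann operator $D_u$ decomposes, modulo a compact perturbation, as $D_u^T \oplus D_u^N$. The tangential piece $D_u^T$ is absorbed by domain reparametrizations via a standard $\dbar$--theory argument, so that unobstructedness of $u$ becomes equivalent to surjectivity of the normal operator $D_u^N$ on a suitable exponentially weighted Sobolev space; the weights are chosen at each puncture so that the Fredholm index of $D_u^N$ reproduces the contributions entering \eqref{eqn:index}.

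The central analytical step is the Carleman similarity principle applied to any nontrivial $\eta \in \ker D_u^N$. Since $\eta$ satisfies a Cauchy--Riemann-type equation on the complex line bundle $N_u$, each of its zeros is isolated with strictly positive local multiplicity $\ord(\eta;\zeta) \ge 1$, and by the asymptotic expansion of \cite{HWZ:props4} its leading behavior at each puncture $z$ is a nontrivial eigenfunction of $\mathbf{A}_z$ with a well-defined asymptotic winding number $\wind^\Phi_\infty(\eta;z)$. A Riemann--Roch calculation on $N_u$ then yields
$$
\sum_{\zeta \in \eta^{-1}(0)} \ord(\eta;\zeta)
\;\le\;
c_1^\Phi(N_u) - \sum_{z \in \Gamma} \wind^\Phi_\infty(\eta;z).
$$
The weight conventions force each $\wind^\Phi_\infty(\eta;z)$ to realize the extremal winding compatible with $\muCZ^\Phi(\mathbf{A}_z - \epsilon)$, with an additional gain of~$1$ at each even puncture arising from a parity argument on the spectrum of $\mathbf{A}_z$. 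Comparing term-by-term with \eqref{eqn:index} and the definition \eqref{eqn:cNindex} of $c_N(u)$ converts the inequality above into a bound on the total zero count of $\eta$ that is linear in $c_N(u)$.

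To conclude, one imposes point-vanishing conditions on $\ker D_u^N$: if $\dim \ker D_u^N$ exceeds a threshold determined by $c_N(u)$, one can produce a nontrivial $\eta$ vanishing at more marked points than the zero-count bound permits, a contradiction. Under the hypothesis $\ind(u) > c_N(u)$, the index identity between $\ind(D_u^N)$ and $\ind(u)$ established in the Fredholm setup then forces $\dim \ker D_u^N \le \ind(D_u^N)$, so the Fredholm alternative gives $\dim \coker D_u^N = 0$ and hence unobstructedness of~$u$. The principal difficulty is the Morse--Bott bookkeeping: one must track asymptotic winding numbers of elements of $\ker D_u^N$ precisely, verify the strict gain at every even puncture encoded in the term $\#\Gamma_0$ of \eqref{eqn:cNindex}, and confirm that index and weight conventions align so that the final inequality is sharp rather than off by a bounded additive error.
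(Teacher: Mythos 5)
Your overall framework is the right one: the paper itself gives no proof of this proposition (it is cited as a special case of the main theorem of \cite{Wendl:automatic}), and the standard argument does indeed run through the normal Cauchy--Riemann operator $\mathbf{D}_u^N$ on $N_u$, the similarity principle, and a count of zeros against asymptotic winding numbers. However, your concluding step contains a genuine gap. The zero-count bound you derive for a nontrivial $\eta \in \ker\mathbf{D}_u^N$ is $Z(\eta) \le c_N(u)$ (this is exactly \eqref{eqn:cNZ}), and the point-constraint argument then yields $\dim_\RR \ker\mathbf{D}_u^N \le 2c_N(u) + 2$. But by \eqref{eqn:cNindex}, $2c_N(u) + 2 = \ind(u) + 2g + \#\Gamma_0$, so this bound implies $\dim\ker\mathbf{D}_u^N \le \ind(\mathbf{D}_u^N)$ --- and hence surjectivity via the Fredholm alternative --- only when $g = 0$ and $\#\Gamma_0 = 0$. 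The hypothesis $\ind(u) > c_N(u)$ is strictly weaker: it is equivalent to $\ind(u) \ge 2g + \#\Gamma_0 - 1$, so for curves with positive genus or even punctures (e.g.\ $g=1$, $\ind(u)=2$, $c_N(u)=1$) your kernel bound is $2c_N+2 = 4 > 2 = \ind(u)$ and the Fredholm alternative gives nothing. Your claim that "the weight conventions force each $\wind^\Phi_\infty(\eta;z)$ to realize the extremal winding" is also not correct --- the windings are only \emph{bounded} by the extremal values, which is why the defect $Z_\infty(\eta)\ge 0$ appears in \eqref{eqn:cNZ} --- but that is a secondary issue.

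The fix, and the route actually taken in \cite{Wendl:automatic}, is to apply the zero-counting argument not to $\ker\mathbf{D}_u^N$ but to the kernel of the formal adjoint $(\mathbf{D}_u^N)^*$, which is isomorphic to $\coker\mathbf{D}_u^N$. A nontrivial element of $\ker(\mathbf{D}_u^N)^*$ is a section of $\overline{\Hom}_\CC(T\dot{\Sigma},N_u)$ satisfying a Cauchy--Riemann-type equation after conjugation, so it again has isolated positive zeros with an upper bound given by the normal Chern number of the adjoint problem; that quantity works out to be $c_N(u) - \ind(u)$, which is negative precisely under the hypothesis $\ind(u) > c_N(u)$. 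This contradicts nonnegativity of the zero count and kills the cokernel directly, with no restriction on the genus or the parity of the punctures. Note that this is exactly the sharp threshold: the twist by $\overline{\Hom}_\CC(T\dot{\Sigma},\cdot)$ is what converts the unfavorable $+2g + \#\Gamma_0$ in your kernel estimate into the clean difference $c_N(u) - \ind(u)$.
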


A stronger statement holds in the case where $u$ is embedded with all
asymptotic orbits distinct and simply covered, $\ind(u) = 2$ and
$c_N(u) = 0$.  Then a result in \cites{Wendl:thesis,Wendl:BP1} shows that
the smooth $2$--dimensional moduli space of curves near $u$ foliates a
neighborhood of $u(\dot{\Sigma})$ in~$W^\infty$.  The reason is that
tangent vectors to the moduli space can be identified with sections of
the normal bundle $N_u \to \dot{\Sigma}$ that satisfy a linear Cauchy-Riemann
type equation, and the condition $c_N(u) = 0$ constrains these sections to
be \red{nowhere zero}.  It follows that if we
add one marked point and consider the resulting evaluation map from the
moduli space into $W^\infty$, this map is a local diffeomorphism.

\subsection{Asymptotic evaluation maps}
\label{subsec:evaluation}

For the arguments in \S\ref{sec:T3}, it is convenient to have an asymptotic
version of the above statement about the evaluation map.  Consider a
connected moduli space $\mM$ of finite energy $J$--holomorphic curves
$u : \dot{\Sigma} \to W^\infty$ that each have an odd puncture
asymptotic to an orbit $x : S^1 \to M$ belonging
to a $1$--parameter family $\pP$ of simply covered Morse-Bott orbits
of period $T > 0$.  To simplify the notation, we'll assume this is the only
puncture, though the discussion can be generalized to multiple punctures in an
obvious way.
Let $\mathbf{A}_x$ denote the asymptotic operator for any $x \in \pP$;
since it is a $1$--parameter family, $\dim\ker\mathbf{A}_x = 1$.  We will
use certain facts about the \red{spectrum $\sigma(\mathbf{A}_x)$} 
of $\mathbf{A}_x$ that \red{are proved}
in \cite{HWZ:props2}: in particular, \red{for any nontrivial
eigenfunction $e_\lambda \in \Gamma(x^*\xi)$ with eigenvalue $\lambda$,
the winding number $\wind^\Phi(\lambda) := \wind^\Phi(e_\lambda) \in \ZZ$ 
depends only on $\lambda$, so that the resulting function
$$
\sigma(\mathbf{A}_x) \to \ZZ : \lambda \mapsto \wind^\Phi(\lambda)
$$
is monotone and attains every integer value exactly twice (counting
multiplicity of eigenvalues).  If $0 \not\in\sigma(\mathbf{A}_x)$, then
one can also deduce the parity of
$\muCZ^\Phi(\mathbf{A}_x)$ from these winding numbers: it is even if and only
if $\sigma(\mathbf{A}_x)$ contains a positive and negative eigenvalue for
which the winding numbers match.  It follows that if
$\muCZ^\Phi(\mathbf{A}_x - \epsilon)$ is odd and
$\lambda_x < 0$ is the largest negative eigenvalue of $\mathbf{A}_x$,}
then the corresponding eigenspace $E_x \subset \Gamma(x^*\xi)$ is 
$1$--dimensional and its eigenfunctions have zero winding relative to 
any nonzero element of $\ker\mathbf{A}_x$.  The union of these eigenspaces for
all $x \in \pP$ defines a real line bundle
$$
E \to \pP.
$$
The eigenfunctions of $\mathbf{A}_x$
appear naturally in the asymptotic formula proved in \cite{HWZ:props4}
\red{(see also \cite{Siefring:asymptotics} for a fuller discussion)}
for a map $u \in \mM$ asymptotic to $x_u \in \pP$.  
Choose coordinates $(s,t) \in [0,\infty) \times S^1$ for a 
neighborhood of the puncture in $\dot{\Sigma}$, and assume without loss 
of generality that $u$ maps this neighborhood into $[0,\infty) \times M$.  
Then using any $\RR$--invariant connection to define the exponential
map, one can choose the coordinates $(s,t)$ so that for sufficiently large
$s$, $u$ satisfies
\begin{equation}
\label{eqn:asympFormula}
u(s,t) = \exp_{(T s,x_u(t))} \left[ e^{\lambda_x s} \left(
f_u(t) + r_u(s,t) \right) \right],
\end{equation}
where $f_u \in E_x$ and $r_u(s,t) \in \xi_{x_u(t)}$ is smooth and converges
to~$0$ uniformly in $t$ as $s \to \infty$.  This formula defines an
``asymptotic evaluation map''
$$
\ev : \mM \to E : u \mapsto (x_u,f_u).
$$

\begin{lemma}
\label{lemma:evalInfty}
In the situation described above, if $u \in \mM$ is immersed with $\ind(u) = 2$
and $c_N(u) = 0$, then $\ev : \mM \to E$ is a local diffeomorphism 
near~$u$.
\end{lemma}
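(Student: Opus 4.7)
The plan is to show $d\ev_u : T_u\mM \to T_{(x_u, f_u)}E$ is an isomorphism by a dimension count combined with injectivity. The codomain has dimension $2$ since $E \to \pP$ is a real line bundle over the $1$-dimensional family $\pP$; the domain also has dimension $2$ because $u$ is immersed with $\ind(u) = 2$, so Proposition \ref{prop:automatic} makes $\mM$ a smooth $2$-manifold near $u$ and identifies $T_u\mM$ with the kernel of the normal Cauchy-Riemann operator $D_u^N$ acting on sections of $N_u$.

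The core of the argument will be an asymptotic analysis of elements $\eta \in \ker D_u^N$. Using the asymptotic formula of \cite{HWZ:props4} (see also \cite{Siefring:asymptotics}), any such $\eta$ admits an expansion near the puncture of the form
$$\eta(s,t) = c_0 \, e_0(t) + c_{\lambda_x} e^{\lambda_x s} e_{\lambda_x}(t) + \rho(s,t),$$
where $e_0$ spans $\ker \mathbf{A}_{x_u} \cong T_{x_u}\pP$, $e_{\lambda_x}$ spans the one-dimensional eigenspace $E_{x_u}$, and $\rho(s,t)$ decays strictly faster than $e^{\lambda_x s}$. By differentiating the defining relation \eqref{eqn:asympFormula} along a smooth family $u_\tau \in \mM$ representing $\eta$, one checks that $d\ev_u(\eta) \in T_{(x_u, f_u)}E \cong T_{x_u}\pP \oplus E_{x_u}$ is precisely the pair $(c_0, c_{\lambda_x})$: the first coefficient records the variation of the asymptotic orbit along $\pP$, and the second the variation of the leading eigenfunction inside the fiber. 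Consequently $\ker d\ev_u$ consists of those $\eta \in \ker D_u^N$ whose leading asymptotic eigenvalue is strictly smaller than $\lambda_x$.

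For such an $\eta$, the monotonicity of the winding function on $\sigma(\mathbf{A}_{x_u})$ from \cite{HWZ:props2} yields $\wind^\Phi(\eta) < \wind^\Phi(\lambda_x) = 0$: the two eigenvalues of winding $0$ relative to the trivialization induced by $\ker \mathbf{A}_{x_u}$ are exhausted by $0$ itself and $\lambda_x$, so any strictly smaller eigenvalue must have strictly smaller winding. I would then invoke the winding/zero inequality of \cite{Wendl:automatic} that underlies the foliation argument already used in Step~7 of the proof of Theorem \ref{thm:compactness}: for an immersed curve with $c_N(u) = 0$ and an odd puncture, any nontrivial element of $\ker D_u^N$ is nowhere zero and attains the optimal asymptotic winding, namely $0$, at the puncture. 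A section with strictly smaller winding must therefore vanish identically, proving injectivity. The main obstacle will be the bookkeeping identification of $d\ev_u(\eta)$ with $(c_0, c_{\lambda_x})$, which requires a careful choice of exponential coordinates and of the trivialization used to compare $x_u$ with nearby orbits in $\pP$ so that differentiation of \eqref{eqn:asympFormula} linearizes as claimed.
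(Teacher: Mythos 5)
Your argument is correct in its essentials and rests on the same two pillars as the paper's proof: the monotonicity of the winding function on $\sigma(\mathbf{A}_{x_u})$, which forces the only two eigenvalues of winding zero to be $0$ and $\lambda_x$, and the count $Z(v)+Z_\infty(v)=c_N(u)=0$, which kills any kernel element whose asymptotic winding drops below zero. Where you diverge is in how you organize the differential of $\ev$. You posit a simultaneous two-term expansion $\eta = c_0 e_0 + c_{\lambda_x}e^{\lambda_x s}e_{\lambda_x} + \rho$ for every $\eta\in\ker\mathbf{D}_u^N$ and identify $d\ev_u(\eta)$ with $(c_0,c_{\lambda_x})$; the paper instead restricts $\mathbf{D}_u^N$ to the positively weighted space $W^{1,p,\epsilon}$, obtaining a surjective operator $\mathbf{D}'$ of index~$1$ whose kernel is a distinguished line $V_u\subset T_u\mM$ consisting of the exponentially decaying sections. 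Directions transverse to $V_u$ have nonvanishing zero-mode and hence project nontrivially to $T_{x_u}\pP$, while on $V_u$ the leading eigenvalue is negative and the winding argument pins it to $\lambda_x$ with eigenfunction in $E_{x_u}$; this exhibits $d\ev_u$ as block-triangular with nonzero diagonal entries, using only the leading-order formula \eqref{eqn:linearAsymp} on each piece. The advantage of that route is precisely that it sidesteps the ``bookkeeping'' you flag as the main obstacle: to define $c_{\lambda_x}$ for an $\eta$ with $c_0\ne 0$ one genuinely needs a refined second-order asymptotic formula (in the spirit of \cite{Siefring:asymptotics}, not just \eqref{eqn:asympFormula}), and the fiber component of $T_{(x_u,f_u)}E$ is only well-defined after choosing a splitting of $TE$, which is immaterial on $V_u$ (where $c_0=0$) but must be fixed in your more direct computation. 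So your proof can be completed, but the filtration by the weighted Fredholm index is the cheaper way to get the same triangular structure.
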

\begin{proof}
We will use the analytical setup in \cite{Wendl:automatic} to show that
under these conditions, $d\ev(u) : T_u\mM \to
T_{(x_u,f_u)}E$ is nonsingular.  If $N_u \to \dot{\Sigma}$ denotes the normal
bundle of $u$, $p > 2$ and $\epsilon > 0$ is small, we have
$T_u \mM = \ker\mathbf{D}_u^N$, where 
$$
\mathbf{D}_u^N : W^{1,p,-\epsilon}(N_u) \to
L^{p,-\epsilon}(\overline{\Hom}_\CC(T\dot{\Sigma},N_u))
$$ 
is the \emph{normal Cauchy-Riemann operator}, defined on exponentially
weighted Sobolev spaces
$$
W^{k,p,-\epsilon} := \{ v \in W^{k,p}_{\text{loc}}\ |\ 
e^{-\epsilon s} v(s,t) \in W^{k,p}([0,\infty)\times S^1) \}
$$
for $k = \{0,1\}$.
Note that by Prop.~\ref{prop:automatic},
$u$ is unobstructed and thus $\dim \ker\mathbf{D}_u^N = 2$.
\red{By an asymptotic version of local elliptic regularity
(see \cites{HWZ:props1,Siefring:asymptotics}), any section
$v \in \ker\mathbf{D}_u^N$ satisfies a linearized version of
\eqref{eqn:asympFormula} in the form
\begin{equation}
\label{eqn:linearAsymp}
v(s,t) = e^{\lambda s} (f_v(t) + r(s,t)),
\end{equation}
where $f_v \in \Gamma(x_u^*\xi)$ is an
eigenfunction of $\mathbf{A}_{x_u}$ with eigenvalue $\lambda < \epsilon$,
and $r(s,t) \to 0$ as $s \to \infty$.  In the present situation, the
largest eigenvalue less than $\epsilon$ is~$0$, thus if $v$ is
nontrivial then $\wind^\Phi(f_v) \le \wind^\Phi(0)$.  The zeroes of $v$
are then isolated and positive, and can be counted by the normal Chern number: 
we have
\begin{equation}
\label{eqn:cNZ}
Z(v) + Z_\infty(v) = c_N(u),
\end{equation}
where $Z(v)$ is the algebraic count of zeros of $v$, and $Z_\infty(v)$ is
a corresponding asymptotic contribution defined as
$\wind^\Phi(0) - \wind^\Phi(f_v)$, and is thus also nonnegative.
So the condition $c_N(u) = 0$ implies that $f_v$ has winding number zero
relative to any nontrivial section in $\ker\mathbf{A}_{x_u}$.
}

We can consider also the restriction of $\mathbf{D}_u^N$ to a smaller
weighted domain,
$$
\mathbf{D}' : W^{1,p,\epsilon}(N_u) \to 
L^{p,\epsilon}(\overline{\Hom}_\CC(T\dot{\Sigma},N_u)),
$$
which amounts to linearizing the $J$--holomorphic curve problem with an added
constraint fixing the asymptotic orbit at the puncture.  This operator has
index~$1$ and is also surjective, 
by the results in \cite{Wendl:automatic}.  It follows
that there is a unique one-dimensional subspace $V_u \subset T_u\mM$ 
consisting of sections $v \in \ker\mathbf{D}_u^N$
for which the eigenvalue $\lambda$ in \eqref{eqn:linearAsymp} is negative.
For all $v \in \ker\mathbf{D}_u^N \setminus V_u$, this eigenvalue is
zero, and we thus have
$v(s,\cdot) \to f_v \in \ker\mathbf{A}_{x_u}$ as $s \to \infty$, implying
that the derivative of the map $\mM \to \pP : u \mapsto x_u$ in this direction
is nonzero.

Now fix an orbit $x \in \pP$ and let $\mM_x = \{ u \in \mM\ |\ x_u = x \}$.
By the remarks above, this is a $1$--dimensional submanifold with
$T_u \mM_x = V_u$.  The restriction of $\ev$ to $\mM_x$ defines
a map $\mM_x \to E_x$, and we claim finally that for any
nontrivial $v \in V_u$, the directional derivative of this map is
nonzero.  This follows from \eqref{eqn:linearAsymp} and the fact that
$Z_\infty(v) = 0$, as the nontrivial eigenfunction in \eqref{eqn:linearAsymp}
must have the same winding as a section in $\ker\mathbf{A}_{x_u}$, and
therefore belongs to $E_{x_u}$.
\end{proof}

\subsection{Intersection numbers}
\label{subsec:intersections}

We discuss next the punctured generalization 
of the adjunction formula.  \red{These results are the topological
consequences of the relative asymptotic analysis carried out by Siefring in
\cite{Siefring:asymptotics}; complete details are explained in
\cite{Siefring:intersection} for
curves with nondegenerate orbits and \cite{SiefringWendl} for the 
Morse-Bott case, and a summary with precise definitions
may also be found in the last section of 
\cite{Wendl:automatic}}.  We shall only need a few details, which we now
state without proof.  For any two finite energy curves $u_1, u_2$, there
exists an \emph{intersection number}
$$
i(u_1 ; u_2) \in \ZZ
$$
which algebraically counts actual intersections plus a certain
``asymptotic contribution,'' which vanishes generically.  \red{The asymptotic
contribution vanishes in particular whenever $u_1$ and $u_2$ have no 
asymptotic orbits in common, and it is otherwise analogous to the term
$Z_\infty(v)$ in \eqref{eqn:cNZ}: it is a nonnegative measure of the winding 
numbers of certain asymptotic eigenfunctions that describe the relative 
approach of two distinct curves to the same orbit, and it vanishes if and 
only if these winding numbers attain the extremal values determined by
the spectrum.  Thus if
$u_1$ and $u_2$ do not cover the same somewhere injective curve,} both the
actual intersection count and the asymptotic contribution are nonnegative, and
moreover, their sum is invariant under deformations of both curves through the
moduli space.  The condition $i(u_1 ; u_2) = 0$ then suffices to ensure that
$u_1$ and $u_2$ never have isolated intersections.  
For any somewhere injective curve $u$,
there is also a \emph{singularity number} $\sing(u) \in \ZZ$, which counts double
points, critical points and ``asymptotic singularities,'' each 
contributing nonnegatively.  This sum is also invariant under deformations,
and the condition $\sing(u) = 0$ suffices to ensure that a somewhere injective
curve is embedded.  The standard adjunction formula for closed holomorphic
curves now generalizes to
\begin{equation}
\label{eqn:adjunction}
i(u ; u) = 2\sing(u) + c_N(u) + \sum_{z\in\Gamma} \cov_\infty(z),
\end{equation}
where the terms $\cov_\infty(z)$ are nonnegative integers that
\red{vanish whenever certain asymptotic eigenfunctions are simply covered,
so they}
depend only on the asymptotic orbit and sign of the respective puncture
$z \in \Gamma$.

Finally, we observe one relevant situation where the left hand side of
\eqref{eqn:adjunction} is guaranteed to be zero.  The proof below is only a
sketch; we refer to \cite{Siefring:intersection} for details.

\begin{lemma}
\label{lemma:i0}
Suppose that $u : \dot{\Sigma} \to W^\infty$ and
$u' : \dot{\Sigma}' \to W^\infty$ are finite energy 
$J$--holomorphic curves that are both contained in $[0,\infty) \times M$ 
and have embedded projections to $M$ that are either identical
or disjoint.  If also $c_N(u) = 0$, then $i(u;u') = 0$.
\end{lemma}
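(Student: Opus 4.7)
The plan is to split the argument along the two alternatives allowed by the hypothesis. First, if $\pi\circ u(\dot\Sigma)$ and $\pi\circ u'(\dot\Sigma')$ are disjoint in $M$, then there are no actual intersections between $u$ and $u'$, since any such intersection would project to a point of $\pi\circ u(\dot\Sigma)\cap\pi\circ u'(\dot\Sigma')$. Moreover, the asymptotic orbits of a finite-energy $J$--holomorphic curve in $\RR\times M$ lie in the closure of its projection to $M$, so the asymptotic orbit sets of $u$ and $u'$ are also disjoint. Hence no pair of punctures $(z,z')\in\Gamma\times\Gamma'$ shares a Reeb orbit, which kills the asymptotic contribution to Siefring's intersection count as well; thus $i(u;u')=0$ in this case.

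Now suppose the two projections coincide as an embedded surface $\Sigma_0\subset M$. The strategy is to use the $\RR$--invariance of $J$ on the cylindrical end to reduce $i(u;u')$ to the self-intersection $i(u;u)$, and then apply the adjunction formula \eqref{eqn:adjunction}. Let $\tau_c$ denote $\RR$--translation by $c$ on $\RR\times M$; by the $\RR$--invariance of $J$, each $u_c:=\tau_c\circ u'$ is a finite-energy $J$--holomorphic curve with the same asymptotic orbits and the same projection $\Sigma_0$ as $u'$. Homotopy invariance of Siefring's pairing through smooth families of punctured $J$--holomorphic curves with fixed asymptotic orbits then gives $i(u;u')=i(u;u_c)$ for every $c\in\RR$. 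A standard rigidity argument for $J$--holomorphic sections of the $\RR$--bundle $\RR\times\Sigma_0\to\Sigma_0$, using the asymptotic formula \eqref{eqn:asympFormula} to pin down the behaviour at the punctures, shows that for a suitable $c$ the curve $u_c$ coincides with $u$, giving $i(u;u_c)=i(u;u)$.

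It then remains to evaluate $i(u;u)$ by the adjunction formula \eqref{eqn:adjunction}. Embeddedness of $\pi\circ u$ forces $u$ itself to be embedded, so $\sing(u)=0$, and it likewise forces each asymptotic orbit of $u$ to be simply covered, so every $\cov_\infty(z)$ vanishes (this contribution depends only on the orbit, as noted in Step~7 of the proof of Theorem~\ref{thm:compactness}). Together with the hypothesis $c_N(u)=0$, this yields $i(u;u)=0$. The main obstacle is the rigidity step in the identical-projection case; a more robust alternative, avoiding it, would be to bound the asymptotic contributions to $i(u;u')$ directly via the relative asymptotic formula of \cite{Siefring:asymptotics}, observing that simply-coveredness of the shared orbits together with the fact that both curves lie in $\RR\times\Sigma_0$ forces the relevant relative eigenfunctions to attain the extremal winding, which is exactly the condition needed for their asymptotic contributions to vanish.
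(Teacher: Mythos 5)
There is a genuine gap, and it occurs in both branches of your case analysis; in each case the problem is that you never use the hypothesis $c_N(u)=0$ where it is actually needed, namely to control the \emph{asymptotic} contribution to $i(u;u')$. In your first case you assert that disjointness of the projections implies disjointness of the asymptotic orbit sets. This is false: the asymptotic orbits lie in the \emph{closures} of the projected images, and the closures of disjoint leaves typically meet. Indeed, the situation the lemma is designed for is exactly this one --- distinct leaves of a finite energy foliation (e.g.\ pages of a planar open book) have disjoint embedded projections but are asymptotic to the \emph{same} binding orbits, so the asymptotic contribution to $i(u;u')$ does not vanish for trivial reasons. The actual mechanism is: $c_N(u)=0$ together with $\RR$--invariance implies $u$ has no asymptotic defect, i.e.\ its asymptotic eigenfunctions attain the extremal winding number, and the relative asymptotic analysis of \cite{Siefring:asymptotics} then forces the \emph{relative} eigenfunctions governing the approach of $u$ and $u'$ to a common orbit to attain extremal winding as well, which is precisely the condition for the asymptotic contribution to vanish. (One first translates $u'$ upward to remove actual intersections and to ensure the relative eigenfunctions are nontrivial.)

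Your second case has the same omission plus two unjustified steps in the adjunction computation. First, embeddedness of $u$ does \emph{not} give $\sing(u)=0$: the implication in \eqref{eqn:adjunction} goes the other way, since $\sing(u)$ includes nonnegative ``asymptotic singularity'' terms that can be positive even for embedded curves (they count hidden double points at infinity). In fact the cleanest way to see $\sing(u)=0$ here is as a \emph{consequence} of $i(u;u)=0$ via adjunction, so using it as an input is circular. Second, embeddedness of the projection does not force the asymptotic orbits to be simply covered (cf.\ Hind's planes asymptotic to multiply covered orbits), so the vanishing of the $\cov_\infty(z)$ terms also needs an argument. Your closing ``more robust alternative'' points at the correct mechanism (extremal winding of the relative eigenfunctions) but attributes it to simple coveredness and containment in $\RR\times\Sigma_0$ rather than to the hypothesis $c_N(u)=0$, which is the one doing the work.
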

\begin{proof}
The almost complex structure is $\RR$--invariant in the region containing
$u$ and $u'$, thus after translating $u'$ upwards, we can assume without
loss of generality that $u$ and $u'$ have no intersections.  This
$\RR$-translation changes the asymptotic eigenfunctions at the ends of $u'$
by multiplication with a positive number, thus we can also assume these
eigenfunctions are not identical at any common asymptotic orbit 
of $u$ and~$u'$.  Now the vanishing of $c_N(u)$ implies due to
$\RR$--invariance that $u$ has no \emph{asymptotic defect}
(cf.~\cite{Wendl:compactnessRinvt}): this means its asymptotic eigenfunctions
all attain the largest allowed winding number.  The asymptotic
analysis of \cite{Siefring:asymptotics} then implies that the same is
true for the eigenfunctions controlling the relative behavior of $u$ and $u'$
at infinity, so the asymptotic contribution to $i(u;u')$ is zero.
\end{proof}

\end{document}